\documentclass{elsarticle}

\usepackage{lineno,hyperref}
\modulolinenumbers[5]

\usepackage[american]{babel}	
\usepackage[utf8]{inputenc}		
\usepackage[T1]{fontenc}		
\usepackage[english=american]{csquotes}	
\usepackage{graphicx}			
\usepackage{epstopdf}
\usepackage{subfig}				
\usepackage{marvosym}			
\usepackage{amsmath}			
\usepackage{amssymb}
\usepackage{amsthm}
\usepackage{mathtools}
\usepackage{color}
\usepackage[linesnumbered,ruled]{algorithm2e}
\usepackage[normalem]{ulem}
\usepackage{rotating}


\theoremstyle{definition}
\newtheorem{definition}{Definition}

\newtheorem{remark}{Remark}

\theoremstyle{plain}
\newtheorem{lemma}{Lemma}
\newtheorem{theorem}{Theorem}
\newtheorem{proposition}{Proposition}
\newtheorem{corollary}{Corollary}

\begin{document}

\begin{frontmatter}
\title{Extremal properties of the Colless balance index for rooted binary trees}

\author{Mareike Fischer\corref{cor1}}
\ead{email@mareikefischer.de}
\cortext[cor1]{Corresponding author}

\author{Lina Herbst\corref{cor2}}
\author{Kristina Wicke\corref{cor3}}

\address{Institute of Mathematics and Computer Science, University of Greifswald, Greifswald, Germany}

\begin{abstract} 

\textcolor{red}{ATTENTION: This manuscript has been subsumed by another manuscript, which can be found on Arxiv: arXiv:1907.05064}

Measures of tree balance play an important role in various research areas, for example in phylogenetics. There they are for instance used to test whether an observed phylogenetic tree differs significantly from a tree generated by the Yule model of speciation.
One of the most popular indices in this regard is the Colless index, which measures the degree of balance for rooted binary trees. 
While many statistical properties of the Colless index (e.g. asymptotic results for its mean and variance under different models of speciation) have already been discussed in different contexts, we focus on its extremal properties. While it is relatively straightforward to characterize trees with maximal Colless index, the analysis of the minimal value of the Colless index and the characterization of trees that achieve it, are much more involved. 
In this note, we therefore focus on the minimal value of the Colless index for any given number of leaves. 
We derive both a recursive formula for this minimal value, as well as an explicit expression, which shows a surprising connection between the Colless index and the so-called Blancmange curve, a fractal curve that is also known as the Takagi curve. Moreover, we characterize two classes of trees that have minimal Colless index, consisting of the set of so-called \emph{maximally balanced trees} and a class of trees that we call \emph{greedy from the bottom trees}.
Furthermore, we derive an upper bound for the number of trees with minimal Colless index by relating these trees with trees with minimal Sackin index (another well-studied index of tree balance).
\end{abstract}

\begin{keyword}
Tree balance \sep  Colless index \sep Sackin index \sep greedy tree structure \sep Blancmange curve \sep Takagi curve
\end{keyword}
\end{frontmatter}


\section{Introduction} \label{Sec_Introduction}
Rooted trees are used in different research areas, ranging from computer science (for an overview see Chapter 2.3 in \citet{Knuth1997}) to evolutionary biology. In particular, they are often used to represent the evolutionary relationships among different species, where the leaves of the tree represent a set of extant species and the root represents their most recent common ancestor. Often it is of interest to study the structure and shape of a phylogenetic tree, in particular its degree of balance. It is for example well known that the Yule model in phylogenetics (a speciation model, which assumes a pure birth process and a constant rate of speciation among species) leads to rather imbalanced trees (e.g. \citet{Mooers1997,Steel2016}). Thus, information on the balance of a tree can be used to test whether an observed phylogenetic tree is consistent with this model (see for example \citet{Mooers1997, Blum2006, Bartoszek2018}). 
Note, however, that tree balance does not only play an important role in phylogenetics, but for example also in computer science (see for example \citet{Nievergelt1973, Walker1976, Chang1984, Andersson1993, Pushpa2007}).

In order to measure the degree of balance for a tree, several balance indices have been introduced, e.g. the \emph{Sackin index} (cf. \citet{Sackin1972}), the \emph{Colless index} (cf. \citet{Colless1982}) and, more recently, the \emph{total cophenetic index} (cf. \citep{Mir2013}). While these indices differ in their definitions and computations, they all assign a single number to a tree that captures its balance.

Here, we focus on the Colless index and explore its extremal properties (for statistical properties of the Colless index see  for example \citet{Blum2006a}). 
It is relatively straightforward to analyze the maximal Colless index a tree on $n$ leaves can have. In fact, it can be shown that the so-called caterpillar tree on $n$ leaves (i.e. the unique rooted binary tree with only one \enquote{cherry}, i.e. one pair of leaves adjacent to the same node) is the unique tree with maximal Colless index (cf. Lemma 1 in \citet{Mir2018}) and this maximal value turns out to be $\frac{(n-1)(n-2)}{2}$. 

In contrast, the analysis of the minimal value of the Colless index is much more involved and is one of the main aims of this manuscript. 
On the one hand, for many years, no explicit expression for the minimal Colless index for a tree on $n$ leaves had been known. At the time when this manuscript was about to be submitted, an independent study for the first time presented such an expression (cf. \citet{Coronado2019}). However, their expression differs from the one we will derive in this manuscript. Ours is based on a fractal curve, namely the so-called Blancmange curve, and therefore shows the fractal structure and the symmetry of the minimum Colless index.\footnote{We will discuss more differences between the present manuscript and the one by \citet{Coronado2019} in the discussion section.} 
On the other hand, this minimum may be achieved by several trees, which raises the question of how many such trees there are. This question has so far not been directly addressed  anywhere in the literature. In the following, we will present an upper bound for the number of trees with minimal Colless index. Thus, our manuscript is the first study which both analyzes the minimal Colless index itself and gives an upper bound on the number of trees with minimal Colless index. Moreover, we characterize some classes of trees that achieve the minimum value. Last, in the discussion section we also show how the results of \citet{Coronado2019} can be used both to improve our bound as well as to give a recursive formula for the number of trees with minimal Colless value.

To be precise, we first prove a recursive formula for the minimal Colless index, before deriving an explicit expression. Surprisingly, the latter is strongly related to a fractal curve, the so-called \emph{Blancmange curve}. This curve is also known as the \emph{Takagi curve} (cf. \citet{Takagi1901}), which for example plays a role in number theory, combinatorics and analysis (cf. \citet{Allaart2012}).

While the recursive formula directly gives rise to a class of trees with minimal Colless index (the class of so-called \emph{maximally balanced trees} (cf. \citet{Mir2013}), we additionally introduce another class of trees with minimal Colless index, namely the class of \emph{greedy from the bottom trees}. We also show that the leaf partitioning induced by the root of these two classes of trees are extremal -- i.e. no tree with minimum Colless index can have a smaller difference between the number of leaves in the left and right subtrees than the maximally balanced tree or a larger difference between these numbers than the greedy from the bottom tree.

We then turn to the combinatorial task of determining the number of trees with minimal Colless index for a given number of leaves. 
By showing that all trees with minimal Colless index also have minimal Sackin index,
we derive an upper bound for this number, since the number of trees with minimal Sackin index is known in the literature (cf. Theorem 8 in \citet{Fischer2018}). Using some knowledge on the leaf partitioning induced by the root of trees with minimum Colless index, we are able to improve this bound even further. However, note that the connection between the Sackin index and the Colless index is intriguing also in its own right -- it shows that two of the most frequently used tree balance indices are actually closely related, even though their definition is very different and even though proving properties of the Colless index is mathematically more involved.

We end our manuscript by linking our results with the results of \citet{Coronado2019} and by pointing out some directions for future research.

\section{Basic definitions and preliminary results} \label{Sec_Preliminaries}
Before we can present our results, we need to introduce some definitions and notations. 
Throughout this manuscript a \emph{rooted tree} is a tree $T=(V(T),E(T))$ with node set $V(T)$ and edge set $E(T)$, where one node is designated as the root (called $\rho$). We use $V_L(T) \subseteq V(T)$ (with $|V_L(T)|=n$) to denote the leaf set of $T$ (i.e. $V_L(T) = \{v \in V: deg(v) \leq 1\}$) and by $\mathring{V}(T)$ we denote the set of internal nodes, i.e. $\mathring{V}(T) = V(T) \setminus V_L(T)$. If $|V_L(T)|=1$, $T$ consists of only one node and no edge and for technical reasons this node is at the same time defined to be the root and the only leaf in the tree. 
Whenever there is no ambiguity we simply denote $E(T)$, $V(T)$, $\mathring{V}(T)$ and $V_L(T)$ as $E$, $V$, $\mathring{V}$ and $V_L$.

Now, for $n \geq 2$, a \emph{rooted binary tree} is a rooted tree where the root has degree $2$ and all other internal nodes have degree $3$. For $n = 1$, we consider the unique tree consisting of only one node as rooted and binary.

Furthermore, we implicitly assume that all edges in $T$ are directed away from the root and whenever there exists a path from $u$ to $v$ in $T$, we call $u$ an \emph{ancestor} of $v$ and $v$ a \emph{descendant} of $u$. In addition, the direct descendants of a node are called \emph{children} of this node, and in a binary tree with $n \geq 2$ leaves each interior node has exactly two children. Two leaves $x$ and $y$ are said to form a cherry, denoted by $[x,y]$, if they have the same direct ancestor.

Given a node $v$ of $T$, we call the set $C_T(v)$ of its descendant leaves the \emph{cluster} of $v$ and use $\kappa_T(v)$ to denote its cardinality.
If $v$ itself is a leaf, we set $C_T(v)=\{v\}$, i.e. $\kappa_T(v)=1$.
Moreover, we denote the subtree of $T$ rooted at $v$ by $T_v$.

Recall that a rooted binary tree $T$ can be decomposed into its two maximal pending subtrees $T_a$ and $T_b$ rooted at the direct descendants $a$ and $b$ of $\rho$, and we denote this by $T=(T_a,T_b)$. We use $n_a$ and $n_b$ to denote the number of leaves of $T_a$ and $T_b$, respectively, and without loss of generality assume throughout this manuscript that $n_a \geq n_b$.

The \emph{depth} $\delta_T(v)$ of a node $v$ is the number of edges on the unique shortest path from the root to $v$. 
Additionally, the \emph{height} of a tree is defined as $h(T) = \max\limits_{v \, \in \, V_L} \delta_v$, i.e. it is the maximum depth of all leaves. 

Given a rooted binary tree $T$ and an internal node $v \in \mathring{V}$ with children $v_1$ and $v_2$, the \emph{balance value} of $v$ is defined as $bal_T(v) = |\kappa_T(v_1)-\kappa_T(v_2)|$. We call an internal node $v$ \emph{balanced} if $bal_T(v) \leq 1$. 
Based on this we call a tree on $n$ leaves \emph{maximally balanced} if all its internal nodes are balanced. Recursively, a rooted binary tree is maximally balanced if its root is balanced and both its maximal pending subtrees are maximally balanced. Note that for all $n \in \mathbb{N}$, there exists a unique maximally balanced tree on $n$ leaves (cf. \citet{Mir2013}), which we denote by $T^{mb}_n$ (cf. Figure \ref{treetop}).

Two other particular trees, which will be needed in the following, are the so-called \emph{caterpillar tree} $T^{cat}_n$ and the so-called \emph{fully balanced tree} $T^{fb}_{k}$, where the latter denotes the unique tree with $n=2^k$ leaves in which all leaves have depth precisely $k$ (cf. Figure \ref{treetop}). Note that we often call $T^{fb}_{k}$ the \emph{fully balanced tree of height $k$} and that we have $T^{fb}_{k}=(T_a, T_b)=(T^{fb}_{k-1}, T^{fb}_{k-1})$, where both $T_a$ and $T_b$ are fully balanced trees of height $k-1$.
Moreover, note that $T^{fb}_{k} = T^{mb}_{2^k}$, because in the special case of $n=2^k$, $T_k^{fb}$ is the unique tree with $bal_{T_k^{fb}}(v)=0$ for all $v \in \mathring{V}$.

The caterpillar tree on the other hand denotes the unique rooted binary tree with $n$ leaves that has only one cherry (cf. Figure \ref{treetop}).

\begin{figure}[htbp]
\centering
\includegraphics[scale=0.45]{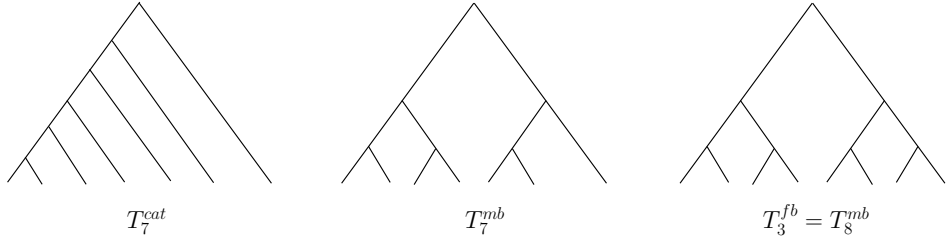}
\caption{Caterpillar tree $T^{cat}_7$ on 7 leaves, maximally balanced tree $T^{mb}_7$ on 7 leaves and fully balanced tree $T^{fb}_3 = T^{mb}_8$ on $2^3=8$ leaves.}
\label{treetop}
\end{figure}

We are now in a position to define the most important concept of this manuscript, namely the Colless index. 

\begin{definition}[\citet{Colless1982}] \label{Def_Colless}
The \emph{Colless index} of a rooted binary tree $T$ is defined as 
\begin{align*}
\mathcal{C}(T) &= \sum\limits_{v \in \mathring{V}(T)}  bal_T(v) \\
&= \sum\limits_{v \in \mathring{V}(T)} \vert  \kappa_T(v_1) - \kappa_T(v_2) \vert,
\end{align*}
where $v_1$ and $v_2$ denote the children of $v$.
\end{definition}

Note that $\mathcal{C}(T) \geq 0$ as it is defined as a sum of absolute values.
As an example consider the three trees depicted in Figure \ref{treetop}. Here, we have: $\mathcal{C}(T^{cat}_7)=15$, $\mathcal{C}(T^{mb}_7)=2$ and $\mathcal{C}(T^{fb}_3)=0$.

Note that the smaller the Colless index of a tree, the \emph{more balanced} we consider it, i.e. whenever we have for two trees $T_1$ and $T_2$ on $n$ leaves that $\mathcal{C}(T_1) < \mathcal{C}(T_2)$, then $T_1$ is called more balanced than $T_2$. 
For example, in Figure \ref{treetop}, $T_7^{mb}$ is more balanced than $T_7^{cat}$.

Also note that the Colless index of a rooted binary tree can be calculated recursively by considering the standard decomposition of a tree.

\begin{lemma} \label{colless_sum}
Let $T=(T_a,T_b)$ be a rooted binary tree. Let $n_a$ and $n_b$ denote the number of leaves of $T_a$ and $T_b$, respectively, where $n_a \geq n_b$. 
Then, we have
$$\mathcal{C}(T) = \mathcal{C}(T_a) + \mathcal{C}(T_b) +  n_a - n_b .$$
\end{lemma}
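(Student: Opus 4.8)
The plan is to unwind the definition of the Colless index via the standard decomposition $T=(T_a,T_b)$. The internal nodes of $T$ split into three disjoint groups: the root $\rho$ itself, the internal nodes of $T_a$, and the internal nodes of $T_b$. That is, $\mathring{V}(T) = \{\rho\} \;\dot\cup\; \mathring{V}(T_a) \;\dot\cup\; \mathring{V}(T_b)$. The key observation is that for any internal node $v$ lying in $T_a$ (respectively $T_b$), its two children and hence its cluster sizes are exactly the same whether computed in $T$ or in $T_a$ (respectively $T_b$), so $bal_T(v) = bal_{T_a}(v)$ for $v \in \mathring{V}(T_a)$, and analogously for $T_b$. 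This is the only nonroutine point, and it is immediate from the fact that $T_v = (T_a)_v$ whenever $v$ is a node of $T_a$, so that $\kappa_T(v') = \kappa_{T_a}(v')$ for all descendants $v'$ of $v$.

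Given this, I would simply write
\begin{align*}
\mathcal{C}(T) &= \sum_{v \in \mathring{V}(T)} bal_T(v)
= bal_T(\rho) + \sum_{v \in \mathring{V}(T_a)} bal_T(v) + \sum_{v \in \mathring{V}(T_b)} bal_T(v) \\
&= bal_T(\rho) + \sum_{v \in \mathring{V}(T_a)} bal_{T_a}(v) + \sum_{v \in \mathring{V}(T_b)} bal_{T_b}(v)
= bal_T(\rho) + \mathcal{C}(T_a) + \mathcal{C}(T_b).
\end{align*}
It then remains to compute $bal_T(\rho)$. The children of $\rho$ are the roots $a$ and $b$ of $T_a$ and $T_b$, whose clusters are precisely the leaf sets of $T_a$ and $T_b$, so $\kappa_T(a) = n_a$ and $\kappa_T(b) = n_b$. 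Hence $bal_T(\rho) = |\kappa_T(a) - \kappa_T(b)| = |n_a - n_b| = n_a - n_b$, the last equality by the standing assumption $n_a \geq n_b$. Substituting this into the display above yields $\mathcal{C}(T) = \mathcal{C}(T_a) + \mathcal{C}(T_b) + n_a - n_b$, as claimed.

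There is essentially no obstacle here; the statement is a direct consequence of the definitions, and the only thing worth being careful about is the decomposition of $\mathring{V}(T)$ into the three disjoint pieces together with the invariance of cluster sizes under passing to a maximal pending subtree. One could optionally remark that the formula also holds trivially in degenerate situations, but since $T=(T_a,T_b)$ presupposes $n \geq 2$ and both $T_a$, $T_b$ are themselves rooted binary trees (possibly single leaves, for which $\mathcal{C} = 0$ and $\mathring{V} = \emptyset$), no separate case analysis is needed.
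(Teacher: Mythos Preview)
Your proof is correct and follows essentially the same approach as the paper's: both decompose $\mathring{V}(T)$ into the root together with $\mathring{V}(T_a)$ and $\mathring{V}(T_b)$, observe that balance values of non-root internal nodes are unchanged when passing to the relevant subtree, and then identify the root's contribution as $|n_a - n_b| = n_a - n_b$. Your version is slightly more explicit about why $bal_T(v) = bal_{T_a}(v)$ for $v \in \mathring{V}(T_a)$, but the argument is otherwise identical.
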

\begin{proof}
By Definition \ref{Def_Colless} we have
\begin{align*}
\mathcal{C}(T) &= \sum\limits_{v \in \mathring{V}(T)} \vert \kappa_T(v_1) - \kappa_T(v_2) \vert \\
&= \sum\limits_{v \in \mathring{V}(T_a)} \vert \kappa_{T_a}(v_1) - \kappa_{T_a}(v_2) \vert + \sum\limits_{v \in \mathring{V}(T_b)} \vert \kappa_{T_b}(v_1) - \kappa_{T_b}(v_2)\vert + \vert \kappa_T(a) - \kappa_T(b) \vert \\
&= \mathcal{C}(T_a) + \mathcal{C}(T_b) + \vert \kappa_T(a) - \kappa_T(b) \vert \\
&= \mathcal{C}(T_a) + \mathcal{C}(T_b) +  n_a - n_b .
\end{align*}
This completes the proof.
\end{proof}

This lemma has a direct consequence, which will be useful when analyzing the minimal Colless index and trees with minimal Colless index throughout this manuscript.

\begin{lemma} \label{max_subtrees}
Let $T=(T_a,T_b)$ be a rooted binary tree on $n$ leaves. Then, if $T$ has minimal Colless index for $n$, i.e. for all other trees $\widetilde{T}$ with $n$ leaves we have $\mathcal{C}(T) \leq \mathcal{C}(\widetilde{T})$, both $\mathcal{C}(T_a)$ and $\mathcal{C}(T_b)$ are minimal for $n_a$ and $n_b$, respectively. 
\end{lemma}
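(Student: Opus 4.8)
The plan is to argue by contradiction using the recursive formula from Lemma \ref{colless_sum}. Suppose $T=(T_a,T_b)$ has minimal Colless index on $n$ leaves, but (say) $\mathcal{C}(T_a)$ is not minimal for $n_a$. Then there exists a tree $T_a'$ on $n_a$ leaves with $\mathcal{C}(T_a') < \mathcal{C}(T_a)$. Form the new tree $T' = (T_a', T_b)$, which again has $n = n_a + n_b$ leaves, and whose two maximal pending subtrees have exactly $n_a$ and $n_b$ leaves as before (so the same ordering $n_a \geq n_b$ still applies and the term $n_a - n_b$ is unchanged).

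Applying Lemma \ref{colless_sum} to both $T$ and $T'$ gives
\begin{align*}
\mathcal{C}(T') &= \mathcal{C}(T_a') + \mathcal{C}(T_b) + n_a - n_b \\
&< \mathcal{C}(T_a) + \mathcal{C}(T_b) + n_a - n_b = \mathcal{C}(T),
\end{align*}
contradicting the minimality of $\mathcal{C}(T)$. Hence $\mathcal{C}(T_a)$ must be minimal for $n_a$, and the symmetric argument (replacing $T_b$ instead, using $T'=(T_a,T_b')$) shows $\mathcal{C}(T_b)$ is minimal for $n_b$.

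The only subtlety worth a sentence is the degenerate case: if $n_b = 1$ then $T_b$ is the single-node tree, which is trivially the unique (hence minimal) tree on one leaf, so there is nothing to prove on that side; and the swap on the $T_a$ side goes through verbatim since $n_a \geq n_b \geq 1$ guarantees $T_a$ is a genuine tree to which Lemma \ref{colless_sum} applies. I do not anticipate any real obstacle here — the statement is an immediate "cut-and-paste" consequence of the additive recursion, the point being simply that replacing one pending subtree by a better one on the same number of leaves leaves the cross term $n_a - n_b$ untouched.
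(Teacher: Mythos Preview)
Your argument is correct and essentially identical to the paper's own proof: assume a maximal pending subtree is not Colless-minimal, replace it by a better one on the same number of leaves, and invoke Lemma~\ref{colless_sum} to get a strictly smaller Colless value, contradicting minimality. The extra sentence about the degenerate case $n_b=1$ is harmless but not needed.
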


\begin{proof}
Assume $\mathcal{C}(T)$ is minimal. 
By Lemma \ref{colless_sum} we have
\begin{align*}
\mathcal{C}(T) &= \mathcal{C}(T_a) + \mathcal{C}(T_b) +  n_a - n_b  .
\end{align*}

Now assume that $\mathcal{C}(T_a)$ is not minimal, i.e. there is a tree $\widehat{T}$ on $n_a$ leaves such that $\mathcal{C}(\widehat{T}) < \mathcal{C}(T_a)$. 
Then we can construct a tree $\widetilde{T}$ on $n$ leaves such that $\widetilde{T} = (\widehat{T},T_b)$, i.e. we replace $T_a$ in $T$ by $\widehat{T}$ to derive $\widetilde{T}$. Now, for $\widetilde{T}$ we have by Lemma \ref{colless_sum} 
$$ \mathcal{C}(\tilde{T}) = \mathcal{C}(\hat{T}) + \mathcal{C}(T_b) +  n_a  - n_b  < \mathcal{C}(T_a) + \mathcal{C}(T_b) +  n_a - n_b  = \mathcal{C}(T).$$
This contradicts the minimality of $\mathcal{C}(T)$, which implies that the assumption was wrong.
So $\mathcal{C}(T_a)$ has to be minimal, and analogously, $\mathcal{C}(T_b)$ has to be minimal, too. 
\end{proof}

Note that Lemma \ref{max_subtrees} analogously holds for trees with maximal Colless index.

\section{Results}
The main aim of this section is to thoroughly analyze the minimal Colless index of rooted binary trees and trees that achieve it. We derive both a recursive as well as an explicit formula for the minimal Colless index and characterize two classes of trees with minimal Colless index. We end by providing an upper bound for the number of trees with minimal Colless index for any number of leaves.

\subsection{Recursive formula for the minimal Colless index}
In the following we establish a recursive formula for the minimal Colless index. Therefore, let $c_n$ denote the minimal Colless index for rooted binary trees with $n$ leaves. Then, we have the following statement:

\begin{theorem} \label{colless_minimum}
Let $c_n$ be the minimal Colless index for a rooted binary tree with $n$ leaves. Then, $c_1=c_2=0$, and for all $n \in \mathbb{N}_{\geq 1}$ we have
\begin{align*}
&c_{2n}=2c_n, \\
&c_{2n+1}=c_{n+1}+c_{n}+1.
\end{align*}
\end{theorem}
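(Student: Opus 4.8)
The plan is to prove both recursions by induction on $n$, using Lemma~\ref{colless_sum} and Lemma~\ref{max_subtrees} as the main tools. The base cases $c_1 = c_2 = 0$ are immediate: the one-leaf tree has no internal nodes, and the unique two-leaf tree is a single cherry with balance value $0$. For the inductive step, fix $n$ and suppose the formula holds for all smaller leaf numbers. Let $T = (T_a, T_b)$ be a tree with minimal Colless index on $2n$ (resp.\ $2n+1$) leaves, with $n_a \geq n_b$ and $n_a + n_b = 2n$ (resp.\ $2n+1$). By Lemma~\ref{max_subtrees}, $\mathcal{C}(T_a) = c_{n_a}$ and $\mathcal{C}(T_b) = c_{n_b}$, so by Lemma~\ref{colless_sum}
\[
c_{2n} \;=\; \min_{\substack{n_a + n_b = 2n \\ n_a \geq n_b \geq 1}} \bigl( c_{n_a} + c_{n_b} + n_a - n_b \bigr),
\]
and similarly for $2n+1$. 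So the theorem reduces to showing that the minimizing split is the balanced one: $(n,n)$ in the even case and $(n+1,n)$ in the odd case.

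The heart of the argument is therefore to prove that the function $f(n_a, n_b) = c_{n_a} + c_{n_b} + (n_a - n_b)$ is minimized at the most balanced split. I would establish this by showing that moving one leaf from the larger side to the smaller side (i.e.\ replacing the split $(n_a, n_b)$ with $(n_a - 1, n_b + 1)$ when $n_a - 1 \geq n_b + 1$) does not increase $f$; iterating this drives any split toward the balanced one. Concretely, one needs
\[
c_{n_a - 1} + c_{n_b + 1} + (n_a - 1) - (n_b + 1) \;\leq\; c_{n_a} + c_{n_b} + n_a - n_b,
\]
i.e.\ $c_{n_a} - c_{n_a - 1} - (c_{n_b + 1} - c_{n_b}) \geq -2$, equivalently $\bigl(c_{n_a} - c_{n_a-1}\bigr) - \bigl(c_{n_b+1} - c_{n_b}\bigr) \geq -2$. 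This would follow from a monotonicity/convexity-type property of the sequence $(c_n)$ — for instance, that the forward differences $c_{n+1} - c_n$ are nondecreasing in $n$ (so the difference on the larger side dominates), together with a bound of the form $0 \leq c_{n+1} - c_n \leq \text{something}$ controlling how negative the right-hand term can be. I expect this auxiliary claim about the sequence $(c_n)$ to be the main obstacle; it should itself be provable by induction using the already-conjectured recursions, or proved jointly with the theorem in a single simultaneous induction on $n$ (prove: the recursions hold \emph{and} $(c_n)$ has the needed difference property, for all indices up to $n$).

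Once the balanced split is shown to be optimal, the rest is a short calculation. In the even case, the split $(n,n)$ gives $c_{2n} = c_n + c_n + (n - n) = 2 c_n$. In the odd case, the split $(n+1, n)$ gives $c_{2n+1} = c_{n+1} + c_n + (n+1) - n = c_{n+1} + c_n + 1$. Finally, one should check that these balanced splits are actually \emph{achievable} by a genuine pair of subtrees — but this is automatic, since by the induction hypothesis there exist trees realizing $c_n$, $c_{n+1}$ on the respective leaf counts, and gluing two such trees at a new root produces a valid rooted binary tree on the desired number of leaves. This also shows, as noted after the theorem, that the maximally balanced tree $T^{mb}_n$ (obtained by always taking the balanced split recursively) attains the minimum $c_n$.
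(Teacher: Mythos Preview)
Your central step is false. You claim that replacing a split $(n_a,n_b)$ by the more balanced $(n_a-1,n_b+1)$ never increases $f(n_a,n_b)=c_{n_a}+c_{n_b}+n_a-n_b$, and that this would follow from some monotonicity or convexity of the forward differences $c_{m+1}-c_m$. Take $n=12$: the split $(8,4)$ gives $f=c_8+c_4+4=0+0+4=4$, but the more balanced split $(7,5)$ gives $f=c_7+c_5+2=2+2+2=6$. So moving toward balance can strictly \emph{increase} $f$; the function is not unimodal in the split (this is exactly the phenomenon behind Theorem~\ref{oddodd}). Plugging into your displayed inequality, $(c_8-c_7)-(c_5-c_4)=(-2)-2=-4<-2$, so the inequality you need fails. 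The forward differences are not even monotone in small cases ($c_3-c_2=1$ but $c_4-c_3=-1$), and no convexity-type statement about $(c_n)$ will salvage the one-leaf-at-a-time argument.

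The paper avoids this trap entirely: it does not compare neighboring splits. Instead, for $c_{2n+2}$ (and similarly $c_{2n+3}$) it takes each candidate $c_{n_a}+c_{n_b}+n_a-n_b$ in the minimum, applies the inductive recursions to $c_{n_a}$ and $c_{n_b}$ separately, and regroups the resulting four $c$-terms plus constants as a sum of two expressions of the form $c_p+c_q+(p-q)$ with $p+q=n+1$. Each such expression is trivially $\geq c_{n+1}$ (being one term in the min defining $c_{n+1}$), yielding $c_{2n+2}\geq 2c_{n+1}$. So the lower bound comes from \emph{pairing halves of the two subtrees}, not from any difference property of the sequence $(c_n)$.
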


Note that the sequence of integers obtained from Theorem \ref{colless_minimum} corresponds to sequence \texttt{A296062} in the On-Line Encyclopedia of Integer Sequences (\citet{OEIS}), linking the minimal Colless index to the base-2 logarithm of the number of isomorphic maximally balanced trees with $n$ leaves (note that up to isomorphism, the maximally balanced tree is unique (\citet{Mir2013}), but there may be several isomorphic maximally balanced trees with $n$ leaves. For example, for $n=3$, there are 2 isomorphic maximally balanced trees).

\begin{proof}[Proof of Theorem \ref{colless_minimum}]
Let $n \in \mathbb{N}_{\geq 1}$. For $n=1$ there is just one tree, which consists of only one leaf and has Colless index 0. For $n=2$ again there is just one tree, which consists of one cherry and has Colless index 0 as well. Thus, $c_1=c_2=0$ as claimed.

We now show 
\begin{align*}
&(i)~ c_{2n} \leq 2c_n, \\
&(ii)~ c_{2n+1} \leq c_{n+1}+c_{n}+1.
\end{align*}
Ranging over all $n_a, n_b$ with $n_a+n_b=n$, by Lemma \ref{colless_sum} and Lemma \ref{max_subtrees} we have
\begin{align}
c_{2n}= \min\{&c_{2n-1}+c_1+2n-2, c_{2n-2}+c_2+2n-4, c_{2n-3}+c_3+2n-6, \nonumber \\
 &c_{2n-4}+c_4+2n-8, \dots, c_{n+1}+c_{n-1}+2, 2c_n \}.
\end{align}
Thus, it follows immediately that $c_{2n} \leq 2c_n$. Analogously by Lemma \ref{colless_sum} and Lemma \ref{max_subtrees}, we have
\begin{align}
c_{2n+1}= \min\{&c_{2n}+c_1+2n-1, c_{2n-1}+c_2+2n-3, c_{2n-2}+c_3+2n-5, \nonumber \\
 &c_{2n-3}+c_4+2n-7, \dots, c_{n+2}+c_{n-1}+3, c_{n+1}+c_{n}+1 \}.
\end{align}
Therefore, we derive $c_{2n+1} \leq c_{n+1}+c_{n}+1$. Thus, $(i)$ and $(ii)$ hold. \\

We now show by induction on $n$ that
\begin{align}
&c_{2n}  \geq 2c_n \text{ and } \label{ib1} \\
&c_{2n+1} \geq c_{n+1}+c_{n}+1. \label{ib2}
\end{align}

For $n=1$ we have $c_2 \geq 2c_1$, since $0 \geq 2 \cdot 0$, and $c_3 \geq c_2+c_1+1$, since $c_3 \geq 1$ (for 3 leaves there is just one tree which has Colless index 1). This completes the base case of the induction.

Now we assume that \eqref{ib1} and \eqref{ib2} hold for all natural numbers up to $n$ and show that they also hold for $n+1$. 

We start by proving \eqref{ib1}. The proof of \eqref{ib2} is given in the Appendix.

By Lemma \ref{colless_sum} and Lemma \ref{max_subtrees} we have that
\begin{align}
c_{2n+2}= \min\{&c_{2n+1}+c_1+2n, c_{2n}+c_2+2n-2, c_{2n-1}+c_3+2n-4, \dots, \nonumber \\
 &c_{n+2}+c_{n}+2, 2c_{n+1} \}. \label{i1}
\end{align}

In the following we consider two cases: $n$ even and $n$ odd. First, let $n$ be even. Thus by the inductive hypothesis, we can rewrite \eqref{i1} as
\begin{align}
c_{2n+2}\geq \min\{&c_{n+1}+c_{n}+1+c_1+2n, 2c_{n}+2c_1+2n-2, \nonumber \\
&c_{n}+c_{n-1}+1+c_2+c_1+1+2n-4, \dots, 2c_{\frac{n}{2}+1}+2c_{\frac{n}{2}}+2, 2c_{n+1} \} \nonumber \\
 = \min\{&c_{n+1}+c_{n}+c_1+n-1+n+2, 2(c_{n}+c_1+n-1), \nonumber \\
 &c_{n}+c_1+n-1+c_{n-1}+c_2+n-3+2, \dots, \nonumber \\
 &2(c_{\frac{n}{2}+1}+c_{\frac{n}{2}}+1), 2c_{n+1} \}. \label{i1.1}
\end{align}
Again by Lemma \ref{colless_sum} and Lemma \ref{max_subtrees}, we have that
\begin{align}
c_{n+1}= \min\{&c_{n}+c_1+n-1, c_{n-1}+c_2+n-3, c_{n-2}+c_3+n-5, \nonumber \\
 &\dots, c_{\frac{n}{2}+1}+c_{\frac{n}{2}}+1 \}, \label{i1.2}
\end{align}
and thus we have for example $c_{n+1} \leq c_n+c_1+n-1$. Then by using \eqref{i1.2}, \eqref{i1.1} becomes the following 
\begin{align}
c_{2n+2}\geq \min\{&c_{n+1}+c_{n+1}+\underbrace{n+2}_{\geq 0}, 2c_{n+1}, c_{n+1}+c_{n+1}+\underbrace{2}_{\geq 0}, \dots, 2c_{n+1}, 2c_{n+1} \} \nonumber \\
&= 2c_{n+1}.
\end{align}
This completes the proof of Equation \eqref{ib1} for $n$ even.

Now, let $n$ be odd. Similarly, we can rewrite \eqref{i1} by using the inductive hypothesis as
\begin{align}
c_{2n+2}\geq \min\{&c_{n+1}+c_{n}+1+c_1+2n, 2c_{n}+2c_1+2n-2, c_{n}+c_{n-1}+1+c_2+c_1+1+2n-4, \dots, \nonumber \\
 &c_{\frac{n+1}{2}+1}+c_{\frac{n+1}{2}}+1+c_{\frac{n+1}{2}}+c_{\frac{n-1}{2}}+1+2, 2c_{n+1} \} \nonumber \\
 = \min\{&c_{n+1}+c_{n}+c_1+n-1+n+2, 2(c_{n}+c_1+n-1), \nonumber \\
 &c_{n}+c_1+n-1+c_{n-1}+c_2+n-3+2, \dots, \nonumber \\
 &c_{\frac{n+1}{2}+1}+c_{\frac{n-1}{2}}+2+2c_{\frac{n+1}{2}}+2 , 2c_{n+1} \} \label{i1.3}.
\end{align}
Again by Lemma \ref{colless_sum} and Lemma \ref{max_subtrees}, we have that
\begin{align}
c_{n+1}= \min\{&c_{n}+c_1+n-1, c_{n-1}+c_2+n-3, c_{n-2}+c_3+n-5, \nonumber \\
 &\dots, 2c_{\frac{n+1}{2}} \}, \label{i1.4}
\end{align}
and thus we have for example $c_{n+1} \leq c_n+c_1+n-1$. Then by using \eqref{i1.4}, \eqref{i1.3} becomes the following
\begin{align}
c_{2n+2}\geq \min\{&c_{n+1}+c_{n+1}+\underbrace{n+2}_{\geq 0}, 2c_{n+1}, c_{n+1}+c_{n+1}+\underbrace{2}_{\geq 0}, \dots, c_{n+1}+c_{n+1}+\underbrace{2}_{\geq 0}, 2c_{n+1} \} \nonumber \\
&= 2c_{n+1}.
\end{align}
This completes the proof of Equation \eqref{ib1} for $n$ odd. 
So in both cases we obtain $c_{2n+2} \geq 2c_{n+1}$ and thus \eqref{ib1} holds for all $n$.
\\
\\
Similarly by induction, we can show that \eqref{ib2} holds for all $n$. The detailed proof is given in the Appendix.
\\ \\
Together with $(i)$ and $(ii)$ this completes the proof.
\end{proof}

\subsection{Explicit expression for the minimal Colless index}
Theorem \ref{colless_minimum} implies that we can recursively calculate the minimal Colless index for all $n$. However, we can also directly calculate it by applying the following theorem. Note that the recursions stated in Theorem \ref{colless_minimum} are needed to prove Theorem \ref{colless_explicit}.

\begin{theorem} \label{colless_explicit}
Let $c_n$ denote the minimal Colless index for a rooted binary tree with $n$ leaves. Let $k_n \coloneqq \lceil \log_2 (n) \rceil$. Then,
$$ c_n = \sum\limits_{i=0}^{k_n-2} \frac{s(2^{i-k_n+1} \cdot n)}{2^{i-k_n+1}},$$
where $s(x) = \min\limits_{z \in \mathbb{Z}}\vert  x-z \vert$, i.e. $s(x)$ is the distance from $x$ to the nearest integer.
\end{theorem}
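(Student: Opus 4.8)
The plan is to prove the explicit formula by induction on $n$, using the recursion from Theorem~\ref{colless_minimum} as the engine, and showing that the right-hand side of the claimed identity satisfies the same recursion and base cases. First I would define $f(n) \coloneqq \sum_{i=0}^{k_n-2} 2^{k_n-1-i}\, s\!\left(2^{i-k_n+1} n\right)$ (rewriting the summand so the powers of $2$ are manifestly nonnegative integers up to the rounding) and verify the base cases $f(1) = f(2) = 0$: when $n=1$ we have $k_n = 0$ and when $n = 2$ we have $k_n = 1$, so in both cases the sum is empty. Then I would check $f(3) = 1$ directly to seed the odd recursion.

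The heart of the argument is to show $f(2n) = 2 f(n)$ and $f(2n+1) = f(n+1) + f(n) + 1$. For the even case, note $k_{2n} = k_n + 1$, so reindexing the sum for $f(2n)$ by $j = i-1$ gives $f(2n) = \sum_{j=-1}^{k_n - 2} 2^{k_n - j}\, s\!\left(2^{j-k_n+1} n\right)$; the $j = -1$ term carries $s(2^{k_n} n) = s(\text{integer}) = 0$ and vanishes, and the remaining terms are exactly $2 f(n)$. The odd case is the delicate one: I expect this to be the main obstacle. Here $k_{2n+1}$ equals $k_n + 1$ unless $n+1$ is a power of $2$ (i.e.\ $2n+1 = 2^{k}-1$ type boundary effects), and $k_{n+1}$ may also jump past $k_n$; so the sums for $f(2n+1)$, $f(n+1)$ and $f(n)$ can have different lengths, and one must track these boundary terms carefully. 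The key identity to exploit is that for the argument $x = 2^{i-k+1}(2n+1)$ one has $2x = 2^{i-k+2}(2n+1)$ and the near-integer distances of $2^{-m}n$, $2^{-m}(n+1)$, and $2^{-m+1}(2n+1)/2$ are related — roughly, $s\!\left(\tfrac{a+b}{2}\right)$ and $s(a), s(b)$ satisfy an additivity/averaging relation when $a,b$ are consecutive dyadic rationals — which is precisely the self-similarity underlying the Blancmange curve. I would isolate this as a small arithmetic lemma: for a dyadic rational $y$ with denominator $2^m$, $s(2y) = \text{(something controlled by whether the } m\text{-th binary digit flips)}$, and combine the summands of $f(n)$ and $f(n+1)$ pairwise.

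Concretely, I would write $2n+1$ in the inner arguments and split $s\!\left(2^{i-k_n}(2n+1)\right)$ using that $2n+1$ is odd: when $i - k_n \le -1$ the fractional part of $2^{i-k_n}(2n+1)$ is determined by low-order bits of $2n+1$, and one can match it against the corresponding terms built from $n$ and $n+1$ by observing that the binary expansions of $n$ and $n+1$ agree except in a terminal block of the form $0\,1\cdots1 \mapsto 1\,0\cdots0$. The "$+1$" in the recursion should emerge from the top term (the $i = k_{2n+1}-2$ summand, whose argument is close to $\pm\tfrac12$ and contributes $1$ after multiplication by the appropriate power of $2$), mirroring how the "$+1$" arises in the recursion of Theorem~\ref{colless_minimum}. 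Once the recursion and base cases are matched, uniqueness of the solution to the recursion in Theorem~\ref{colless_minimum} gives $c_n = f(n)$ for all $n$, completing the proof. I would keep the Blancmange/Takagi connection as a remark: defining $T(x) = \sum_{m \ge 0} 2^{-m} s(2^m x)$, the formula says $c_n = 2^{k_n - 1} T\!\left(n / 2^{k_n - 1}\right)$ up to the finite truncation, which is the clean statement to highlight.
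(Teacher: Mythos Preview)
Your plan is essentially the paper's proof: induct on $n$, use the recursion $c_{2n}=2c_n$, $c_{2n+1}=c_{n+1}+c_n+1$ from Theorem~\ref{colless_minimum}, and verify that the explicit sum satisfies the same recursion term by term, with the even case collapsing because the extra summand is $s(\text{integer})=0$ and the odd case producing the ``$+1$'' from the top summand $s(n/2)/2^{-1}=1$.

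Two small points where your sketch is looser than it needs to be. First, your reindexing for $f(2n)$ has an arithmetic slip (the vanishing extra term has argument $s(n)$, not $s(2^{k_n}n)$); this is cosmetic. Second, and more substantively, the ``averaging'' lemma you allude to is exactly what the paper isolates as Lemma~\ref{lemma_fi}: for odd $n$ with $2^{k_n-1}<n-1$, one has $f_i(n{+}1)+f_i(n{-}1)=2f_i(n)$ for $0\le i\le k_n-3$, proved by checking that $n{-}1,n,n{+}1$ all lie on the same side of the midpoint between consecutive multiples of $2^{k_n-i-1}$. The boundary case $n-1=2^{k_n-1}$ (where $k_{(n-1)/2}=k_n-2$ rather than $k_n-1$) does not fit this lemma and is handled by a direct calculation showing both sides equal $k_n-1$; this is the concrete version of the ``$k$ may jump'' issue you flagged.
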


\begin{remark}
Note that the expression above shows a surprising connection to the so-called \emph{Blancmange curve}, a fractal curve. This curve is also known as the \emph{Takagi curve} (cf. \citet{Takagi1901}) and plays a role in different areas such as combinatorics, number theory and analysis (cf. \citet{Allaart2012}) and is defined as $T: [0,1] \rightarrow \mathbb{R}$ with
$$ T(x) = \sum\limits_{i=0}^{\infty} \frac{s(2^i \cdot x)}{2^i},$$
where $s(x)$ is defined as in Theorem \ref{colless_explicit} (adapted from \citet{Allaart2012}). 
In contrast to $T: [0,1] \rightarrow \mathbb{R}$, for $c_n$ we have  $c_n: \mathbb{N} \rightarrow \mathbb{N}$, the sum of $c_n$ runs from $0$ to $k_n-2$ (and not up to infinity), and the index is shifted. So the two functions are not identical, but it is intriguing that this explicit formula for $c_n$ links the Colless index from phylogenetics to other research areas such as number theory.

Figure \ref{Fig_MinimalColless} shows the minimal Colless index $c_n$ for $n=1, \ldots, 128$ and illustrates the fractal property of it. 
\end{remark}

\begin{figure}
	\centering
	\includegraphics[scale=0.3]{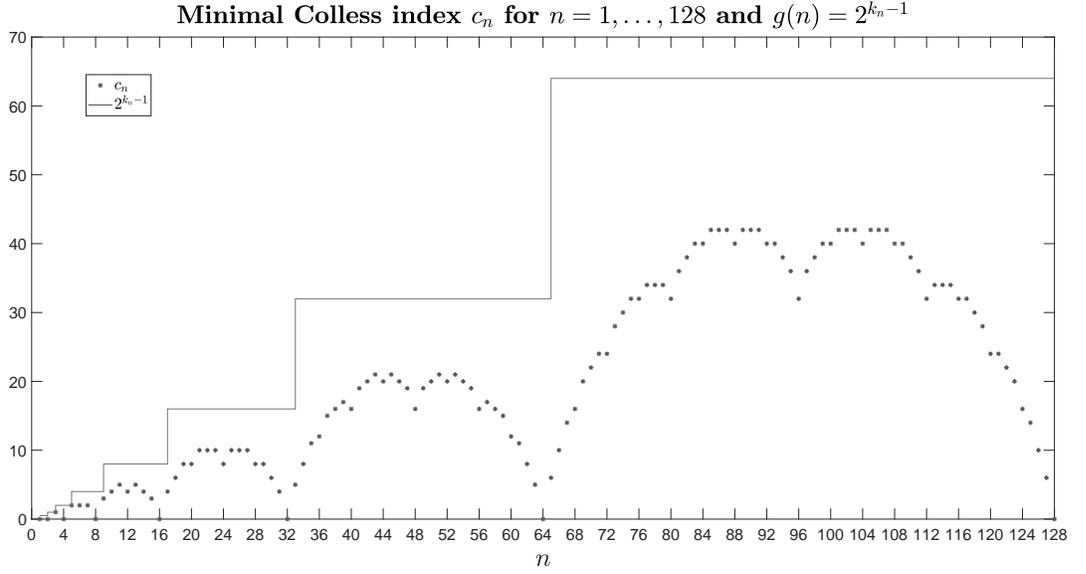}
	\caption{Plot of the minimal Colless index $c_n$ for $n=1, \ldots, 128$ and $g(n)=2^{k_n-1}$ (upper bound for the maximal minimal Colless index; cf. Lemma \ref{max_min_Colless}), where $k_n \coloneqq \lceil \log_2(n) \rceil$.}
	\label{Fig_MinimalColless}
\end{figure}

Before we give the proof of Theorem \ref{colless_explicit}, we shortly have a look at some interesting properties of $s(x) = \min\limits_{z \in \mathbb{Z}}\vert  x-z \vert$ and for example show that subadditivity holds for $s(x)$. These properties will be of relevance various times in the remainder of this manuscript.

\begin{lemma} \label{subbaditivity_of_s}
Let $s(x) = \min\limits_{z \in \mathbb{Z}}\vert  x-z \vert$, i.e. $s(x)$ is the distance from $x$ to the nearest integer.
Let $a \in \mathbb{R}$. Then, we have:
\begin{enumerate}
\item For $b \in \mathbb{R}$: $s(a+b) \leq s(a) + s(b).$
\item For $b \in \mathbb{Z}$: $s(a) = s(a+b)$.
\end{enumerate}
\end{lemma}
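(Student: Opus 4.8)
The plan is to prove both claims directly from the definition $s(x) = \min_{z \in \mathbb{Z}} |x - z|$, since each reduces to an elementary argument about distances to the integer lattice. First I would record the basic reformulation that $s(x)$ equals the distance from $x$ to $\mathbb{Z}$, so in particular $s(x) \in [0, 1/2]$ for all $x \in \mathbb{R}$, and that there exists some integer $z_x$ realizing the minimum, i.e. $s(x) = |x - z_x|$.

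For part (2), the fact that $s(a) = s(a+b)$ whenever $b \in \mathbb{Z}$ is almost immediate: adding an integer $b$ to $a$ simply permutes the set $\{a - z : z \in \mathbb{Z}\}$ (it equals $\{(a+b) - z' : z' \in \mathbb{Z}\}$ via the bijection $z' = z + b$), so the two minima of absolute values coincide. I would write this as: $s(a+b) = \min_{z \in \mathbb{Z}} |a + b - z| = \min_{z' \in \mathbb{Z}} |a - z'| = s(a)$, where the middle step substitutes $z' = z - b \in \mathbb{Z}$.

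For part (1), subadditivity $s(a+b) \le s(a) + s(b)$, I would pick integers $z_a, z_b$ with $s(a) = |a - z_a|$ and $s(b) = |b - z_b|$. Then $z_a + z_b \in \mathbb{Z}$, so by definition of $s$ as a minimum, $s(a+b) \le |a + b - (z_a + z_b)| = |(a - z_a) + (b - z_b)| \le |a - z_a| + |b - z_b| = s(a) + s(b)$, using the ordinary triangle inequality in the penultimate step. That completes the proof.

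I do not anticipate a genuine obstacle here — both statements are routine once one unpacks the definition of $s$ — but the one point worth stating carefully is the substitution of the index variable in part (2) (and the choice of witnessing integers in part (1)), since the whole argument hinges on the fact that a minimum over $\mathbb{Z}$ is invariant under integer translations of the index. I would present the two parts in the order (2) then (1), or equivalently (1) then (2) as stated in the lemma, keeping each to two or three lines.
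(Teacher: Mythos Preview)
Your proof is correct and, for part~1, essentially identical to the paper's: both pick integers realizing the minima and apply the triangle inequality for $|\cdot|$. For part~2 the paper takes a slightly different route, deducing equality from part~1 by applying subadditivity twice (once as $s(a+b)\le s(a)+s(b)=s(a)$ since $s(b)=0$, and once as $s(a)=s((a+b)-b)\le s(a+b)+s(-b)=s(a+b)$); your direct substitution $z'=z-b$ is equally valid and arguably cleaner.
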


\begin{proof}
\begin{enumerate}
\item Let $a, b \in \mathbb{R}$. Then,
\begin{align*}
s(a) + s(b) &= \min\limits_{z_a \in \mathbb{Z}} |a-z_a| + \min\limits_{z_b \in \mathbb{Z}} |b-z_b| \\
&= \min\limits_{z_a, z_b \in \mathbb{Z}} \left\lbrace |a-z_a| + |b-z_b| \right\rbrace \\
&\geq \min\limits_{z_a, z_b \in \mathbb{Z}}  |a-z_a+b-z_b|\, \text{ (subadditivity of the absolute value)} \\
&= \min\limits_{z_a, z_b \in \mathbb{Z}} |a+b-(z_a+z_b)| \\
&= \min\limits_{z_a + z_B \in \mathbb{Z}} |a+b-(z_a+z_b)| \\
&= s(a+b).
\end{align*}
\item Now let $a \in \mathbb{R}$ and $b \in \mathbb{Z}$. By 1. we immediately have $s(a+b) \leq s(a) + s(b)$. The fact that $b \in \mathbb{Z}$ leads to $s(b)=0$, which results in $s(a+b) \leq s(a)$.
Additionally, again by 1., we have for $b \in \mathbb{Z}$ that $s(a) = s(a+b-b) \leq s(a+b) + s(-b) = s(a+b)$. Combining both arguments results in $s(a) = s(a+b)$ for all $b \in \mathbb{Z}$.
\end{enumerate}
\end{proof}

The proof of Theorem \ref{colless_explicit} requires one more lemma, the proof of which is given in the Appendix.
\begin{lemma} \label{lemma_fi}
Let $n \in (2^{k_n-1},2^{k_n})$ be odd, where $k_n \coloneqq \lceil \log_2 (n) \rceil$, and let $n-1>2^{k_n-1}$. Moreover, let $s(x) = \min\limits_{z \in \mathbb{Z}} \vert x-z \vert$ and let 
$$ f_i(n) \coloneqq \frac{s(2^{i-k_n+1} \cdot n)}{2^{i-k_n+1}}.$$
Then for $0 \leq i \leq k_n-3$,
$$f_i(n+1) + f_i(n-1)=2 \cdot f_i(n).$$
\end{lemma}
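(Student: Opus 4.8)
The plan is to fix $i$ with $0 \leq i \leq k_n-3$, write $m \coloneqq k_n-1-i \geq 2$, and reduce everything to the elementary function $s$. With this notation we have
$$f_i(n) = \frac{s(n/2^m)}{1/2^m} = 2^m \, s(n/2^m),$$
so the claimed identity $f_i(n+1)+f_i(n-1)=2f_i(n)$ is equivalent, after dividing by $2^m$, to
$$s\!\left(\tfrac{n+1}{2^m}\right) + s\!\left(\tfrac{n-1}{2^m}\right) = 2\, s\!\left(\tfrac{n}{2^m}\right).$$
So the whole lemma is really a statement about the distance-to-nearest-integer function $s$ at three equally spaced points $\tfrac{n-1}{2^m}, \tfrac{n}{2^m}, \tfrac{n+1}{2^m}$ whose common spacing is $\tfrac{1}{2^m} \leq \tfrac14$. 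The key structural fact I would use: $s$ is piecewise linear with slopes $\pm 1$, and its only ``breakpoints'' (local maxima, where linearity fails) occur at half-integers $\tfrac12 + \mathbb{Z}$; on any interval of length $\le \tfrac12$ containing no half-integer in its interior, $s$ is affine, and affine functions satisfy $g(x-h)+g(x+h)=2g(x)$ exactly. Hence the identity holds automatically unless a half-integer lies strictly between $\tfrac{n-1}{2^m}$ and $\tfrac{n+1}{2^m}$, i.e. unless $\tfrac{n}{2^m}$ is within $\tfrac{1}{2^m}$ of a point in $\tfrac12+\mathbb{Z}$.

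The real work is therefore to rule out the bad case: I must show that, under the hypotheses $n$ odd, $2^{k_n-1} < n < 2^{k_n}$, and $n-1 > 2^{k_n-1}$ (so in fact $2^{k_n-1} < n-1 < n < n+1 < 2^{k_n}$, using $n<2^{k_n}$ and $n$ odd hence $n \neq 2^{k_n}-1$ would need care --- actually $n+1 \le 2^{k_n}$ suffices since $n+1$ being a power of $2$ still gives $s$ affine near it on the relevant side; I would just note $\tfrac{n}{2^m}\in(2^{i},2^{i+1})$ after rescaling), no half-integer of the form $\tfrac12 + \mathbb{Z}$ lies in the open interval $\bigl(\tfrac{n-1}{2^m},\tfrac{n+1}{2^m}\bigr)$. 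Equivalently, using Lemma~\ref{subbaditivity_of_s}(2) to drop integer parts, I need: there is no odd integer $q$ with $\bigl|\,n/2^{m-1} - q\,\bigr| < 1/2^{m-1}$, i.e. $n$ is not within $2$ of an odd multiple of $2^{m-1}$. Since $m \ge 2$, odd multiples of $2^{m-1}$ are even, indeed divisible by $2^{m-1} \ge 2$; and $n$ is odd. An odd number and an odd multiple of $2^{m-1}$ (which is $\equiv 0$ or $2^{m-1} \pmod{2^m}$, in any case even) differ by an odd number, so the distance is at least $1$; I then have to push this to ``at least $2$'' --- but the distance being odd and $\ge 1$ still allows distance $=1$. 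So I would instead argue directly at the half-integer level: a half-integer in $\bigl(\tfrac{n-1}{2^m},\tfrac{n+1}{2^m}\bigr)$ would be $\tfrac{2j+1}{2}$ for some integer $j$, giving $n-1 < 2^{m-1}(2j+1) < n+1$, i.e. $2^{m-1}(2j+1) = n$ (the only integer strictly between $n-1$ and $n+1$); but $2^{m-1}(2j+1)$ is even (as $m\ge 2$) while $n$ is odd, contradiction. That is the crux, and it is short once set up correctly.

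Concretely the steps are: \textbf{(1)} rewrite $f_i$ as $2^m s(n/2^m)$ with $m=k_n-1-i$, reducing the claim to the $s$-identity above; \textbf{(2)} record that $s$ restricted to any interval of length $\le \tfrac12$ containing no half-integer in its interior is affine, hence satisfies the midpoint/parallelogram identity $s(x-h)+s(x+h)=2s(x)$; \textbf{(3)} show, using $n$ odd and $m\ge 2$, that no half-integer lies strictly between $(n-1)/2^m$ and $(n+1)/2^m$, via the parity argument that the only integer strictly between $n-1$ and $n+1$ is $n$, which cannot equal the even number $2^{m-1}(2j+1)$; \textbf{(4)} conclude that the three points lie in a common affinity interval of $s$ and apply step (2). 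The hypotheses $2^{k_n-1}<n-1$ and $n<2^{k_n}$ are there precisely to guarantee $m-1\ge 1$ (equivalently $i \le k_n-3$) has ``room'' and that we stay in a single branch; I would double-check that $m \ge 2$ is exactly equivalent to $i \le k_n-3$, which it is. The main obstacle is purely the parity/location bookkeeping in step (3) --- making sure the endpoints' being possibly at a power of $2$ (a breakpoint-free point, so harmless) does not spoil the ``common affinity interval'' argument; once one observes powers of $2$ are integers and thus never half-integers, this evaporates.
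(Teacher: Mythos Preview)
Your approach is essentially the paper's: both reduce to showing that $n-1$, $n$, $n+1$ lie on a single affine piece of a rescaled $s$, and both secure this by the parity observation that $n$ is odd while the relevant breakpoints are even. The paper rescales first, writing $f_i(n)$ as the distance from $n$ to the nearest multiple of $2^{k_n-i-1}$, identifies the interval $[z\cdot 2^{k_n-i-1},(z+1)\cdot 2^{k_n-i-1}]$ containing $n$, and shows that both the endpoints and the midpoint $m$ of that interval are even (using $i\le k_n-3$), so the odd $n$ is strictly to one side of $m$ and $n\pm 1$ stay in the same half; your version does the same computation in the $s$-scale. The decisive step --- the only integer strictly between $n-1$ and $n+1$ is $n$, and $n$ cannot equal an even number --- is identical.

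There is one genuine oversight in your step~(2): you claim that the only breakpoints of $s$ are the local maxima at $\tfrac12+\mathbb{Z}$, but $s$ also fails to be affine at every integer (local minima, where the slope jumps from $-1$ to $+1$). So ``no half-integer in the interior'' is not enough to guarantee affinity on $\bigl[\tfrac{n-1}{2^m},\tfrac{n+1}{2^m}\bigr]$; you must also exclude integers from the open interval. Fortunately your parity argument in step~(3) covers this case verbatim: an integer $k$ in $\bigl(\tfrac{n-1}{2^m},\tfrac{n+1}{2^m}\bigr)$ would force $k\cdot 2^m=n$, and $k\cdot 2^m$ is even since $m\ge 2$, contradicting $n$ odd. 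With that patch (equivalently: check that no element of $\tfrac12\mathbb{Z}$, not just of $\tfrac12+\mathbb{Z}$, lies in the open interval) the proof is correct and matches the paper both in strategy and in the key step.
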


We are now in the position to prove Theorem \ref{colless_explicit}.
\begin{proof}[Proof of Theorem \ref{colless_explicit}]
Let $c_n$ denote the minimal Colless index for a rooted binary tree $T_n$ with $n$ leaves and let $k_n \coloneqq \lceil \log_2 (n) \rceil$. 
Moreover, let 
$$f_i(n) \coloneqq \frac{s(2^{i-k_n+1} \cdot n)}{2^{i-k_n+1}}.$$
Then, the statement in Theorem \ref{colless_explicit} becomes 
$c_n = \sum\limits_{i=0}^{k_n-2} f_i(n)$.
We prove this statement by induction on $n$. If $n=1$, there is only one rooted binary tree $T_1$ consisting of only one leaf. Thus, $\mathcal{C}(T_1)=0$, which is minimal (since there is only one tree for $n=1$) and thus $\mathcal{C}(T_1)=c_1$. On the other hand, we have $k_1= \lceil \log_2 (1) \rceil=0$ and thus $\sum\limits_{i=0}^{k_n-2} f_i(n)= \sum\limits_{i=0}^{-2} f_i(n)$ is the empty sum, which  by convention is $0$. This completes the base case of the induction. 

Now, suppose that the claim holds for all rooted binary trees with  up to $n-1$ leaves and consider a rooted binary tree $T_n$ with $n$ leaves. We now distinguish two cases: $n$ even and $n$ odd.

If $n$ is even, we have:
\begin{align*}
c_n &= 2 \cdot c_{\frac{n}{2}} \; \text{ by Theorem } \ref{colless_minimum} \\
&= 2 \cdot \sum\limits_{i=0}^{k_n-3} f_i \left( \frac{n}{2} \right) \\
&\text{by the inductive hypothesis and the fact that }\\
& k_{\frac{n}{2}}=  \left\lceil \log_2 \left( \frac{n}{2} \right) \right\rceil= \lceil \log_2 (n) - \log_2 (2) \rceil = \lceil \log_2 (n) \rceil - 1 = k_n-1 \\
&= 2 \cdot \sum\limits_{i=0}^{k_n-3} \frac{s(2^{i-(k_n-1)+1} \cdot \frac{n}{2})}{2^{i-(k_n-1)+1}} \\
&= \sum\limits_{i=0}^{k_n-3} \frac{s(2^{i-k_n+1} \cdot n)}{2^{i-k_n+1}} \\
&= \sum\limits_{i=0}^{k_n-3} \frac{s(2^{i-k_n+1} \cdot n)}{2^{i-k_n+1}} + \frac{s(2^{k_n-2-k_n+1} \cdot n)}{2^{k_n-2-k_n+1}} - \frac{s(2^{k_n-2-k_n+1} \cdot n)}{2^{k_n-2-k_n+1}} \\
&= \sum\limits_{i=0}^{k_n-2} \frac{s(2^{i-k_n+1} \cdot n)}{2^{i-k_n+1}} - \frac{s(\frac{n}{2})}{2^{-1}} \\
&= \sum\limits_{i=0}^{k_n-2} \frac{s(2^{i-k_n+1} \cdot n)}{2^{i-k_n+1}} \\
&\text{because } s \left( \frac{n}{2} \right) = 0 \text{ by Definition of $s(x)$ (as $n$ is even, $\frac{n}{2} \in \mathbb{Z}$)}.
\end{align*} 
This completes the proof for $n$ even. \\

If $n$ is odd, we already have by Theorem \ref{colless_minimum} that $c_n = c_{\frac{n+1}{2}} + c_{\frac{n-1}{2}} +1$. Moreover, as $k_n = \lceil \log_2 (n) \rceil$ and by the fact that $n$ is odd, we have $n \in (2^{k_n-1},2^{k_n})$, and therefore $n+1 \in (2^{k_n-1},2^{k_n}]$, which gives us $\lceil \log_2 (n+1) \rceil = k_n$. This leads to $k_{\frac{n+1}{2}} = \lceil \log_2 (\frac{n+1}{2}) \rceil= \lceil \log_2 (n+1) - \log_2 (2) \rceil = k_n-1$.

Again by the fact that $n$ is odd, we have $n-1 \in [2^{k_n-1},2^{k_n})$. \\

So first, if $n-1 = 2^{k_n-1}$ we have $\lceil \log_2 (n-1) \rceil = k_n-1$. 
Moreover, we have $k_{\frac{n-1}{2}} = \lceil \log_2 (\frac{n-1}{2}) \rceil = \lceil \log_2 (n-1) - \log_2 (2) \rceil = k_n -1 -1 = k_n -2$.
As $n-1 = 2^{k_n-1}$, we have $n= 2^{k_n-1} +1$.
This leads to
\begin{align*}
c_n &= c_{\frac{n+1}{2}} + c_{\frac{n-1}{2}} +1 \; \text{ by Theorem } \ref{colless_minimum} \\
&= \sum\limits_{i=0}^{k_{\frac{n+1}{2}}-2} f_i \left( \frac{n+1}{2} \right) + \sum\limits_{i=0}^{k_{\frac{n-1}{2}}-2} f_i \left( \frac{n-1}{2}  \right) + 1
\; \text{ by the inductive hypothesis} \\
&= \sum\limits_{i=0}^{k_n-3} \frac{s(2^{i-(k_n-1)+1} \cdot \frac{n+1}{2})}{2^{i-(k_n-1)+1}} + \sum\limits_{i=0}^{k_n-4} \frac{s(2^{i-(k_n-2)+1} \cdot \frac{n-1}{2})}{2^{i-(k_n-2)+1}} + 1 \\
&\text{by } k_{\frac{n+1}{2}} = k_n-1 \text{ and } k_{\frac{n-1}{2}} = k_n-2 \\
&= \sum\limits_{i=0}^{k_n-3} \frac{s(2^{i-k_n+1} \cdot (n+1))}{2^{i-k_n+2}} + \sum\limits_{i=0}^{k_n-4} \frac{s(2^{i-k_n+2} \cdot (n-1))}{2^{i-k_n+3}} + 1 \\
&= \sum\limits_{i=0}^{k_n-3} \frac{s(2^{i-k_n+1} \cdot (2^{k_n-1}+2))}{2^{i-k_n+2}} + \sum\limits_{i=0}^{k_n-4} \frac{s(2^{i-k_n+2} \cdot 2^{k_n-1})}{2^{i-k_n+3}} + 1 \; \text{ as } n= 2^{k_n-1} +1 \\
&= \sum\limits_{i=0}^{k_n-3} \frac{s(2^i + 2^{i-k_n+2})}{2^{i-k_n+2}} + \sum\limits_{i=0}^{k_n-4} \frac{s(2^{i+1})}{2^{i-k_n+3}} + 1 \\
&= \sum\limits_{i=0}^{k_n-3} \frac{s(2^i + 2^{i-k_n+2})}{2^{i-k_n+2}} + 1 \, \text{ as } s(2^{i+1})= 0 \text{ for all } i \geq 0 \\
&= \sum\limits_{i=0}^{k_n-3} \frac{s(2^{i-k_n+2})}{2^{i-k_n+2}} + 1 
\text{ by Lemma \ref{subbaditivity_of_s}, Part 2, as } 2^i \in \mathbb{Z}.
\end{align*}
Note that $2^{i-k_n+2} \leq 2^{(k_n-3)-k_n+2} = \frac{1}{2}$ results in $s(2^{i-k_n+2}) = 2^{i-k_n+2} - 0 = 2^{i-k_n+2}$. Therefore,
\begin{align*}
c_n &= \sum\limits_{i=0}^{k_n-3} \frac{s(2^{i-k_n+2})}{2^{i-k_n+2}} + 1 = \sum\limits_{i=0}^{k_n-3} \frac{2^{i-k_n+2}}{2^{i-k_n+2}} + 1 \\
&= \sum\limits_{i=0}^{k_n-3} 1 + 1 = (k_n -2) +1 = k_n -1.
\end{align*}
This gives us 
\begin{equation} \label{cn_nodd1}
c_n = k_n -1.
\end{equation}

We now show that this indeed equals
\begin{align*}
\sum\limits_{i=0}^{k_n-2} \frac{s(2^{i-k_n+1} \cdot n)}{2^{i-k_n+1}} 
&= \sum\limits_{i=0}^{k_n-2} \frac{s(2^{i-k_n+1} \cdot (2^{k_n-1} +1))}{2^{i-k_n+1}} \; \text{ as } n= 2^{k_n-1} +1 \\
&= \sum\limits_{i=0}^{k_n-2} \frac{s(2^i + 2^{i-k_n+1})}{2^{i-k_n+1}} \\
&= \sum\limits_{i=0}^{k_n-2} \frac{s(2^{i-k_n+1})}{2^{i-k_n+1}} \; \text{ by Lemma \ref{subbaditivity_of_s}, Part 2, as } 2^i \in \mathbb{Z}.
\end{align*}
Similarly, we have $2^{i-k_n+1} \leq 2^{(k_n-2)-k_n+1} = \frac{1}{2}$ and thus $s(2^{i-k_n+1}) = 2^{i-k_n+1}$, which leads to
\begin{align*}
\sum\limits_{i=0}^{k_n-2} \frac{s(2^{i-k_n+1})}{2^{i-k_n+1}} &= \sum\limits_{i=0}^{k_n-2} \frac{2^{i-k_n+1}}{2^{i-k_n+1}} = \sum\limits_{i=0}^{k_n-2} 1 = k_n -1 = c_n \; \text{ by \eqref{cn_nodd1}.}
\end{align*}
This completes the proof for $n-1 = 2^{k_n-1}$. \\

The last case we need to consider is the case that $n$ is odd and $n-1 \in  (2^{k_n-1},2^{k_n})$. In this case, we have $\lceil \log_2 (n-1) \rceil = k_n$ (since $n-1 > 2^{k_n-1}$). Furthermore, $k_{\frac{n-1}{2}} = \lceil \log_2 (\frac{n-1}{2}) \rceil = \lceil \log_2 (n-1) - \log_2 (2) \rceil = k_n -1$.

Therefore,
\begin{align*}
c_n &= c_{\frac{n+1}{2}} + c_{\frac{n-1}{2}} +1 
\; \text{ by Theorem } \ref{colless_minimum} \\
&= \sum\limits_{i=0}^{k_{\frac{n+1}{2}}-2} f_i \left( \frac{n+1}{2} \right) + \sum\limits_{i=0}^{k_{\frac{n-1}{2}}-2} f_i \left( \frac{n-1}{2}\right) + 1 \; \text{ by the inductive hypothesis} \\
&= \sum\limits_{i=0}^{k_n-3} \frac{s(2^{i-(k_n-1)+1} \cdot \frac{n+1}{2})}{2^{i-(k_n-1)+1}} + \sum\limits_{i=0}^{k_n-3} \frac{s(2^{i-(k_n-1)+1} \cdot \frac{n-1}{2})}{2^{i-(k_n-1)+1}} + 1 \\
&\text{by } k_{\frac{n+1}{2}} = k_{\frac{n-1}{2}} = k_n-1 \\
&= \sum\limits_{i=0}^{k_n-3} \frac{s(2^{i-k_n+1} \cdot (n+1))}{2^{i-k_n+2}} + \sum\limits_{i=0}^{k_n-3} \frac{s(2^{i-k_n+1} \cdot (n-1))}{2^{i-k_n+2}} + 1 \\
&= \frac{1}{2} \cdot \sum\limits_{i=0}^{k_n-3} \frac{s(2^{i-k_n+1} \cdot (n+1))}{2^{i-k_n+1}} + \frac{1}{2} \cdot \sum\limits_{i=0}^{k_n-3} \frac{s(2^{i-k_n+1} \cdot (n-1))}{2^{i-k_n+1}} + 1 \\
&= \frac{1}{2} \cdot \sum\limits_{i=0}^{k_n-3} f_i(n+1) + \frac{1}{2} \cdot \sum\limits_{i=0}^{k_n-3} f_i(n-1) + 1 \\
&= \frac{1}{2} \cdot \sum\limits_{i=0}^{k_n-3} (f_i(n+1) + f_i(n-1)) + 1 \\
&= \frac{1}{2} \cdot \sum\limits_{i=0}^{k_n-3} 2 \cdot f_i(n) + 1 \; \text{ by Lemma } \ref{lemma_fi} \\
&= \sum\limits_{i=0}^{k_n-3} f_i(n) + 1 \\
&= \sum\limits_{i=0}^{k_n-3} f_i(n) + 1 + \left( f_{k_n-2}(n) - f_{k_n-2}(n) \right) \\
&= \sum\limits_{i=0}^{k_n-2} f_i(n) + 1 - f_{k_n-2}(n) \\
&= \sum\limits_{i=0}^{k_n-2} f_i(n) \\
&\text{because } f_{k_n-2}(n)= \frac{s(2^{k_n-2-k_n+1} \cdot n)}{2^{k_n-2-k_n+1}} = \frac{s(2^{-1} \cdot n)}{2^{-1}} = 1 \text{ for } n \text{ odd (as $s(2^{-1}\cdot n) = \frac{1}{2}$).}
\end{align*}

So the claim holds for all cases, which completes the proof. 
\end{proof}

The following proposition states some properties of the minimal Colless index $c_n$. We use the explicit expression for the minimal Colless index stated in Theorem \ref{colless_explicit} to show the third part of this proposition. 

\begin{proposition} \label{min_colless_properties}
Let $n \in \mathbb{N}_{\geq 1}$ and let $k_n \coloneqq \lceil \log_2(n) \rceil$. Then, we have for the minimal Colless index $c_n$:
\begin{enumerate}
\item If $n=2^{k_n}+1$, then $c_n=k_n$.
\item If $n=2^{k_n}-1$, then $c_n=k_n-1$.
\item For $n \in (2^{k_n-1},2^{k_n})$ and $j \in \{1,\dots, 2^{k_n-1}-1 \}$ we have $c_{2^{k_n-1}+j} = c_{2^{k_n}-j}.$
\end{enumerate}
\end{proposition}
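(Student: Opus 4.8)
The plan is to prove all three parts directly from the explicit formula in Theorem~\ref{colless_explicit}, namely $c_n = \sum_{i=0}^{k_n-2} f_i(n)$ with $f_i(n) = s(2^{i-k_n+1}\cdot n)/2^{i-k_n+1}$, together with the two basic facts about $s$ from Lemma~\ref{subbaditivity_of_s}. Parts 1 and 2 are short computations of the same flavour as the case $n-1=2^{k_n-1}$ already treated in the proof of Theorem~\ref{colless_explicit}, and Part 3 is a symmetry statement that I would reduce to a single pointwise identity on the summands $f_i$.

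For Part 1, write $n = 2^{k_n}+1$ and note first that $k_n = \lceil \log_2(2^{k_n}+1)\rceil = k_n+1$; one must be careful here, since the subscript $k_n$ in the hypothesis refers to a specific integer $m$ with $n = 2^m+1$, and then $k_n = \lceil\log_2 n\rceil = m+1$. Substituting into the formula, each term becomes $s(2^{i-k_n+1}(2^{k_n-1}+1))/2^{i-k_n+1} = s(2^i + 2^{i-k_n+1})/2^{i-k_n+1}$, and by Lemma~\ref{subbaditivity_of_s}(2) (subtracting the integer $2^i$) this equals $s(2^{i-k_n+1})/2^{i-k_n+1}$. Since $i$ ranges over $0,\dots,k_n-2$, the exponent $i-k_n+1$ is at most $-1$, so $2^{i-k_n+1}\le \tfrac12$ and $s$ acts as the identity, giving each term the value $1$; summing $k_n-1$ terms yields $c_n = k_n-1$. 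Wait — the claim says $c_n = k_n$, so I would recheck the index bookkeeping: with $n = 2^{k_n}+1$ in the \emph{statement's} notation we have $\lceil\log_2 n\rceil = k_n+1$, so the sum actually runs to $(k_n+1)-2 = k_n-1$, i.e. $k_n$ terms, each equal to $1$, hence $c_n = k_n$. This is the one place to be meticulous about what $k_n$ denotes; with that resolved the computation is immediate. Part 2 is entirely analogous: $n = 2^{k_n}-1$ forces $\lceil\log_2 n\rceil = k_n$, so the sum runs to $k_n-2$; writing $n = 2^{k_n-1}+(2^{k_n-1}-1)$ is less clean, so instead I would use $2^{i-k_n+1}n = 2^{i-k_n+1}\cdot 2^{k_n} - 2^{i-k_n+1} = 2^{i+1} - 2^{i-k_n+1}$ and apply Lemma~\ref{subbaditivity_of_s}(2) to drop the integer $2^{i+1}$, obtaining $s(2^{i-k_n+1})/2^{i-k_n+1} = 1$ for each $i$ (again the exponent is $\le -1$), and summing the $k_n-1$ terms gives $c_n = k_n-1$.

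For Part 3, the cleanest route is to prove the pointwise symmetry $f_i(2^{k_n-1}+j) = f_i(2^{k_n}-j)$ for every admissible $i$ and then sum; one must first check both sides have the same value of $k_{(\cdot)}$, namely $k_n$, which holds because both $2^{k_n-1}+j$ and $2^{k_n}-j$ lie in $(2^{k_n-1},2^{k_n})$ for $j\in\{1,\dots,2^{k_n-1}-1\}$, so in both formulas the summation range $0,\dots,k_n-2$ and the scaling exponents $i-k_n+1$ are identical. Then $2^{i-k_n+1}(2^{k_n}-j) = 2^{i+1} - 2^{i-k_n+1}j$, and since $2^{i+1}\in\mathbb{Z}$, Lemma~\ref{subbaditivity_of_s}(2) gives $s(2^{i-k_n+1}(2^{k_n}-j)) = s(-2^{i-k_n+1}j) = s(2^{i-k_n+1}j)$ because $s$ is even. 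On the other hand $2^{i-k_n+1}(2^{k_n-1}+j) = 2^i + 2^{i-k_n+1}j$, and again dropping the integer $2^i$ gives $s(2^{i-k_n+1}(2^{k_n-1}+j)) = s(2^{i-k_n+1}j)$. The two are equal, so $f_i$ agrees on the two arguments, and summing over $i=0,\dots,k_n-2$ yields $c_{2^{k_n-1}+j} = c_{2^{k_n}-j}$.

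I expect no serious obstacle here; everything follows from Lemma~\ref{subbaditivity_of_s} and elementary properties of $s$ (evenness, and acting as the identity on $[-\tfrac12,\tfrac12]$). The only genuine pitfall — and the thing I would flag explicitly in the write-up — is the notational overloading of the symbol $k_n$ in the statements of Parts 1 and 2, where one must read $n = 2^{k_n}+1$ as saying ``$n-1$ is a power of $2$'' and then recompute $\lceil\log_2 n\rceil$ rather than blindly substituting; getting the summation upper limit right is what makes the constants come out as $k_n$ versus $k_n-1$. An alternative to the explicit-formula approach for Parts 1 and 2 would be a short induction using the recursions $c_{2n}=2c_n$ and $c_{2n+1}=c_{n+1}+c_n+1$ from Theorem~\ref{colless_minimum} (e.g. $c_{2^m+1} = c_{2^{m-1}+1} + c_{2^{m-1}} + 1 = c_{2^{m-1}+1} + 0 + 1$, since $c_{2^{m-1}} = 0$ as $2^{m-1}$ leaves admit the fully balanced tree), but since the proposition is explicitly stated as an application of Theorem~\ref{colless_explicit}, I would keep the formula-based proof for Part~3 at least and optionally note the recursive shortcut for Parts 1 and 2.
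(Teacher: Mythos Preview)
Your proof is correct. For Part~3 your argument is essentially identical to the paper's: both sides lie in $(2^{k_n-1},2^{k_n})$ so share the same $k_n$, and then one strips the integer $2^i$ (resp.\ $2^{i+1}$) via Lemma~\ref{subbaditivity_of_s}(2) and uses evenness of $s$ to match the summands termwise.

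The only difference is in Parts~1 and~2. You carry out the computation directly from the explicit formula of Theorem~\ref{colless_explicit}, which works fine and forces you to confront (correctly) the notational slippage in the statement: the $k_n$ in ``$n=2^{k_n}+1$'' is not the same as $\lceil\log_2 n\rceil$, so the summation upper bound must be recomputed. The paper instead proves Parts~1 and~2 by the short induction via Theorem~\ref{colless_minimum} that you mention at the end as an alternative (e.g.\ $c_{2^{m+1}+1}=c_{2^m+1}+c_{2^m}+1=c_{2^m+1}+1$), which sidesteps the bookkeeping issue entirely and is marginally cleaner. Either route is perfectly adequate; your explicit-formula approach has the small advantage of giving a uniform method for all three parts, while the paper's inductive shortcut for Parts~1 and~2 avoids the index juggling you had to pause over.
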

The proof of Proposition \ref{min_colless_properties} is given in the Appendix.
Note, however, that these properties are also reflected in Figure \ref{Fig_MinimalColless}.

\subsection{Classes of trees with minimal Colless index} \label{sec_classes}
Now that we have analyzed the minimal Colless index $c_n$, we turn our attention to trees that achieve it. 
Before considering the class of maximally balanced trees and  introducing the class of greedy from the bottom trees for arbitrary $n$, we start with analyzing the special case where $n$ is a power of two. In particular, we consider the fully balanced tree $T_k^{fb}$ of height $k$ and show that its name is indeed justified in the sense that it is the unique tree with minimal Colless index for $n=2^k$ leaves. This observation has been stated in the literature several times without formal proof (e.g. \citet{Heard1992, Rogers1993, Mir2013, Mir2018}), which is why we provide a formal proof in the following.

\begin{theorem} \label{min_colless}
Let $T$ be a rooted binary tree with $n=2^k$ leaves. Then, we have: \\
$ \mathcal{C}(T) = 0 $ if and only if $ T = T_k^{fb} $.
\end{theorem}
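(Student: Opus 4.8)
The plan is to prove both directions by induction on $k$, using the recursive decomposition of the Colless index from Lemma~\ref{colless_sum}. The base case $k=0$ (i.e. $n=1$) is immediate, since the only tree on one leaf has Colless index $0$ and equals $T_0^{fb}$; similarly $k=1$ ($n=2$) is trivial. For the inductive step, suppose the claim holds for $k-1$, and let $T=(T_a,T_b)$ be a rooted binary tree on $n=2^k$ leaves with $n_a\geq n_b$ and $n_a+n_b=2^k$.

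For the ``if'' direction, assume $T=T_k^{fb}$. Then by definition $T_k^{fb}=(T_{k-1}^{fb},T_{k-1}^{fb})$, so $n_a=n_b=2^{k-1}$ and, by the inductive hypothesis, $\mathcal{C}(T_{k-1}^{fb})=0$. Lemma~\ref{colless_sum} then gives $\mathcal{C}(T)=0+0+(2^{k-1}-2^{k-1})=0$, as desired.

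For the ``only if'' direction, assume $\mathcal{C}(T)=0$. By Lemma~\ref{colless_sum}, $\mathcal{C}(T)=\mathcal{C}(T_a)+\mathcal{C}(T_b)+(n_a-n_b)$, and since all three summands are nonnegative, each must vanish. From $n_a-n_b=0$ and $n_a+n_b=2^k$ we get $n_a=n_b=2^{k-1}$, which is again a power of two. From $\mathcal{C}(T_a)=\mathcal{C}(T_b)=0$ together with the inductive hypothesis applied to $T_a$ and $T_b$ (both rooted binary trees on $2^{k-1}$ leaves), we conclude $T_a=T_b=T_{k-1}^{fb}$. Hence $T=(T_{k-1}^{fb},T_{k-1}^{fb})=T_k^{fb}$.

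The argument is essentially routine; the only point requiring a little care is making sure the inductive hypothesis applies to $T_a$ and $T_b$, which hinges on the fact that $n_a-n_b=0$ forces $n_a=n_b=2^{k-1}$ — i.e. that a tree with zero Colless index must split its leaves perfectly evenly at the root, so that the subtree leaf counts remain powers of two and the induction goes through. No separate obstacle arises, since all quantities involved are nonnegative integers and the decomposition in Lemma~\ref{colless_sum} does all the work.
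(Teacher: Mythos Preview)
Your proof is correct and follows essentially the same approach as the paper: both use Lemma~\ref{colless_sum} and induction on $k$ to argue that $\mathcal{C}(T)=0$ forces $n_a=n_b=2^{k-1}$ and $\mathcal{C}(T_a)=\mathcal{C}(T_b)=0$, then apply the inductive hypothesis. The only cosmetic difference is that the paper handles the ``if'' direction directly from the definition of $T_k^{fb}$ (every internal node has balance value $0$, so the sum is $0$), whereas you prove this direction by induction as well---both are equally valid.
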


\begin{proof}
First, suppose that $T=T^{fb}_k$.  Then by definition of $T^{fb}_k$, we have $bal_{T_k^{fb}}(v)=0$ for all $v \in \mathring{V}$. Thus, $\mathcal{C}(T_k^{fb}) = \sum\limits_{v \in \mathring{V}} bal_{T_k^{fb}}(v) = 0$.

Now, let $T$ be a rooted binary tree with $n=2^k$ leaves and suppose that $\mathcal{C}(T)=0$. 
We prove by induction on $k$ that $\mathcal{C}(T)=0$ implies that $T$ equals $T_k^{fb}$. For $k=0$, we have $2^0=1=n$, and there is only one rooted binary tree $T$, which is by definition a fully balanced tree. 
We now assume that the statement holds up to $k-1$ and consider a rooted binary tree $T=(T_a, T_b)$ with $n=2^k$ leaves. Note that without loss of generality, $n \geq 2$ (else we consider the base case of the induction again). 

By Lemma \ref{colless_sum} we have
\begin{align*}
0 &= \mathcal{C}(T_a) + \mathcal{C}(T_b) +  n_a - n_b,
\end{align*}
which implies $\mathcal{C}(T_a) = \mathcal{C}(T_b) = (n_a-n_b)=0$. In particular, $n_a=n_b=2^{k-1}$.  
Therefore, by the inductive hypothesis, we have that $T_a$ and $T_b$ are both fully balanced trees of height $k-1$. This concludes that $T$ is the fully balanced tree of height $k$.
This completes the proof.
\end{proof}

\subsubsection{Maximally balanced trees} \label{sec_mb}
While we have already seen that for $n=2^k$ there is exactly one tree with minimal Colless index, for arbitrary $n$ there might be several ones.
 
In Theorem \ref{colless_minimum} we have already seen how to calculate the minimal Colless index for a rooted binary tree with $n$ leaves recursively. This theorem directly yields a construction principle for trees with minimal Colless index. 
Given a number $n$ of leaves, we construct a tree $T = (T_a, T_b)$ with $\mathcal{C}(T)=c_n$, where
\begin{itemize}
\item $n_a = n_b = \frac{n}{2}$, if $n$ is even;
\item $n_a = \frac{n+1}{2}$ and $n_b = \frac{n-1}{2}$, if $n$ is odd,
\end{itemize}
and where $T_a$ and $T_b$ are constructed recursively by the same principle. 
In particular, this implies that for every internal node $v$ of $T$ we have: $bal_T(v) \leq 1$, which in turn implies that $T$ is the maximally balanced tree.

Note that this approach can be seen as a \enquote{greedy from the top strategy}\footnote{In Section \ref{sec_gfb} we will additionally consider a \enquote{greedy from the bottom strategy} for building trees with minimal Colless index.} of bipartitioning the leaf set of each subtree (starting at the root and going towards the leaves) into two sets such that the difference of their cardinalities is minimized.

By using Theorem \ref{colless_minimum}, we will now formally show that trees constructed according to this principle, i.e. maximally balanced trees, indeed have minimal Colless index.

\begin{theorem} \label{colless_minimum_mb}
Let $T_n^{mb}$ be the maximally balanced tree with $n$ leaves. Then, $T_n^{mb}$ has minimal Colless index, that is $\mathcal{C}(T_n^{mb})=c_n$. 
\end{theorem}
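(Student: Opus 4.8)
The plan is to prove $\mathcal{C}(T_n^{mb}) = c_n$ by strong induction on $n$, using the recursive characterization of the maximally balanced tree together with the recursion for $c_n$ established in Theorem~\ref{colless_minimum}. The base cases $n=1$ and $n=2$ are immediate: there is a unique rooted binary tree on $1$ or $2$ leaves, it is trivially maximally balanced, and it has Colless index $0 = c_1 = c_2$.

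For the inductive step, write $T_n^{mb} = (T_a, T_b)$ according to the recursive definition of maximal balance, so that $n_a = n_b = \frac{n}{2}$ when $n$ is even and $n_a = \frac{n+1}{2}$, $n_b = \frac{n-1}{2}$ when $n$ is odd; in both cases $T_a$ and $T_b$ are themselves maximally balanced trees on $n_a$ and $n_b$ leaves, respectively. By the induction hypothesis, $\mathcal{C}(T_a) = c_{n_a}$ and $\mathcal{C}(T_b) = c_{n_b}$. Applying Lemma~\ref{colless_sum} gives, for $n$ even,
\begin{align*}
\mathcal{C}(T_n^{mb}) = \mathcal{C}(T_a) + \mathcal{C}(T_b) + n_a - n_b = c_{n/2} + c_{n/2} + 0 = 2 c_{n/2},
\end{align*}
and for $n$ odd,
\begin{align*}
\mathcal{C}(T_n^{mb}) = \mathcal{C}(T_a) + \mathcal{C}(T_b) + n_a - n_b = c_{(n+1)/2} + c_{(n-1)/2} + 1.
\end{align*}
By Theorem~\ref{colless_minimum}, the right-hand sides are exactly $c_n$ in the even case ($c_{2m} = 2 c_m$ with $m = n/2$) and in the odd case ($c_{2m+1} = c_{m+1} + c_m + 1$ with $m = (n-1)/2$), which completes the induction.

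I do not expect a serious obstacle here, since the heavy lifting has already been done in Theorem~\ref{colless_minimum}: the only thing the proof needs is that the root split of the maximally balanced tree realizes the balanced leaf partition $\{\lceil n/2 \rceil, \lfloor n/2 \rfloor\}$ that appears as one of the terms of the minimization in the proof of Theorem~\ref{colless_minimum}, and that its subtrees are recursively maximally balanced so the induction hypothesis applies. The one point requiring a small amount of care is to confirm that the recursive definition of $T_n^{mb}$ indeed forces the root split to be $\{\lceil n/2 \rceil, \lfloor n/2 \rfloor\}$ — this follows because a balanced root node has $|n_a - n_b| \le 1$, and with $n_a + n_b = n$ this pins down $n_a$ and $n_b$ uniquely up to swapping — and to invoke the uniqueness of $T_n^{mb}$ (cf.\ \citet{Mir2013}) so that the construction is well defined.
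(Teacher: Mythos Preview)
Your proof is correct and follows essentially the same approach as the paper: both argue by induction on $n$, use the recursive decomposition of $T_n^{mb}$ into maximally balanced subtrees with the $\{\lceil n/2\rceil,\lfloor n/2\rfloor\}$ root split, apply Lemma~\ref{colless_sum} together with the inductive hypothesis, and then invoke Theorem~\ref{colless_minimum} to identify the resulting expression with $c_n$. The only cosmetic differences are that you phrase it as strong induction and include $n=2$ in the base case, and you add a short justification for why the root split must be $\{\lceil n/2\rceil,\lfloor n/2\rfloor\}$, which the paper leaves implicit.
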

\begin{proof}
The proof is by induction on $n$. If $n=1$, $T_1^{mb}$ consists only of a leaf and thus $\mathcal{C}(T_1^{mb})=0=c_1$, which completes the base case of the induction. \\
Now we assume that for all maximally balanced trees with up to $n$ leaves the claim holds and consider the maximally balanced tree $T_{n+1}^{mb}$ with $n+1$ leaves. By definition all internal nodes of $T_{n+1}^{mb}$ are balanced. Thus, it remains to show that $\mathcal{C}(T_{n+1}^{mb})=c_{n+1}$. \\
If $n+1$ is even, then $n_a=n_b=\frac{n+1}{2}$ and we have
\begin{align*}
\mathcal{C} \left( T_{n+1}^{mb} \right) &= \mathcal{C} \left( T_{\frac{n+1}{2}}^{mb} \right) + \mathcal{C} \left( T_{\frac{n+1}{2}}^{mb}  \right) + \frac{n+1}{2} - \frac{n+1}{2} && \text{ by Lemma \ref{colless_sum}} \\
&=2~\mathcal{C} \left( T_{\frac{n+1}{2}}^{mb} \right) \\
&=2~ c_{\frac{n+1}{2}} &&\text{by the inductive hypothesis} \\
&=c_{n+1} &&\text{by Theorem } \ref{colless_minimum}.
\end{align*}
If $n+1$ is odd, then $n_a = \frac{n+2}{2}$ and $n_b = \frac{n}{2}$. Thus,
\begin{align*}
\mathcal{C} \left( T_{n+1}^{mb} \right) &= \mathcal{C} \left( T_{\frac{n+2}{2}}^{mb} \right) + \mathcal{C} \left( T_{\frac{n}{2}}^{mb} \right) + \frac{n+2}{2} - \frac{n}{2}  && \text{by Lemma } \ref{colless_sum} \\
&=\mathcal{C} \left( T_{\frac{n+2}{2}}^{mb} \right)+\mathcal{C} \left( T_{\frac{n}{2}}^{mb} \right)+1 \\
&=c_{\frac{n+2}{2}} + c_{\frac{n}{2}} +1 && \text{by the inductive hypothesis} \\
&=c_{n+1} &&\text{by Theorem } \ref{colless_minimum}.
\end{align*}
Therefore, in both cases $T_{n+1}^{mb}$ has minimal Colless index, which completes the proof.
\end{proof}

Note, however, that the maximally balanced tree on $n$ leaves is not necessarily the only tree with minimal Colless index. For example, both trees depicted in Figure \ref{Fig_ExampleGFB} have minimal Colless index; one of them is the maximally balanced tree on $6$ leaves, the other one is a so-called greedy from the bottom tree, which we will introduce in the following section.

\subsubsection{Greedy from the bottom trees} \label{sec_gfb}
We now introduce another class of trees with minimal Colless index, which we call \emph{greedy from the bottom} trees, or \emph{GFB trees} for short. These trees can be constructed according to the following algorithm:

\begin{algorithm}[H] \label{alg_gfb}
\SetAlgoLined
$n \leftarrow$ number of taxa\;
$treeset \leftarrow n$  trees consisting of one node each\;
$min \leftarrow 1$ \tcp*[f]{minimal tree size (number of leaves) in set of trees}\;
\While{ $\vert treeset \vert >1$}{
	$u \leftarrow$ tree from $treeset$ of size $min$\;
	$treeset = treeset \setminus \{u\}$\;
	$min \leftarrow$ minimal size of all trees in $treeset$\;
	$v \leftarrow$ tree from $treeset$ of size $min$\;
	$treeset = treeset \setminus \{v\}$\;
	$newtree \leftarrow$ tree consisting of new root $\rho_{uv}$ and maximal pending subtrees $u$ and $v$\;
	$treeset \leftarrow treeset \cup \{newtree\}$\;
	$min \leftarrow$ minimal size of all trees in $treeset$\;
}
$finaltree \leftarrow treeset[1]$\tcp*[f]{i.e. only remaining element of treeset}\;
\Return $finaltree$\;
\caption{Greedy from the bottom}
\label{gfb}
\end{algorithm}

\begin{definition}[GFB tree]
Let $T$ be a rooted binary tree on $n$ leaves that results from Algorithm \ref{gfb}. Then, $T$ is called \emph{greedy from the bottom tree} or \emph{GFB tree} for short and is denoted by $T^{gfb}_n$.
\end{definition}

Note that Algorithm \ref{alg_gfb} greedily clusters trees of minimal size starting with single nodes and proceeding until only one tree is left. Thus, in principle it goes from the leaves towards the root (in contrast to the greedy strategy presented in Section \ref{sec_mb}, which goes from the root towards the leaves), which is why we call the resulting trees GFB trees.

\begin{remark}
Note that for $n=1$ Algorithm \ref{alg_gfb} returns one single node.
Moreover, note that the GFB tree is unique for all positive integers $n$ (as it results from Algorithm \ref{alg_gfb}, which is deterministic). 
\end{remark}

The following lemma shows that both maximal pending subtrees of a GFB tree are also GFB trees, which is an intuitive property and follows directly from Algorithm \ref{alg_gfb}.

\begin{lemma} \label{gfb_subtrees}
Let $T$ be a GFB tree with $n \geq 2$ leaves and standard decomposition $T=(T_a,T_b)$. Then, $T_a$ and $T_b$ are also GFB trees.
\end{lemma}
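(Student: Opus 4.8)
The plan is to trace Algorithm \ref{gfb} and argue that, once the two maximal pending subtrees $T_a$ and $T_b$ of the output tree have been formed, the execution history of the algorithm can be split into two sub-executions that are precisely the runs of the algorithm on $n_a$ and $n_b$ leaves. First I would observe that the final tree $T$ is obtained in the very last iteration of the while-loop by joining the last two trees remaining in $treeset$ under a new root; by the definition of the standard decomposition, these two remaining trees are exactly $T_a$ and $T_b$. So it suffices to show that the subtree $T_a$ (and symmetrically $T_b$) was itself built by a sequence of merges each of which combined the two smallest trees then available, and that this sequence, restricted to the leaves of $T_a$, is faithful to what Algorithm \ref{gfb} would do if started with $n_a$ single-node trees.

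The key step is the following structural claim: at every iteration of the loop, the multiset of sizes of the trees in $treeset$ is determined by $n$ alone (the algorithm is deterministic and size-driven — it never looks at tree shape, only at tree sizes), and moreover the merges that contribute to $T_a$ involve only trees all of whose leaves lie in $C_T(a)$. I would prove this by induction on the number of merge operations. The point is that whenever the algorithm picks the two smallest trees $u$ and $v$ and merges them, the resulting tree's leaf set is $C(u)\cup C(v)$; tracing backwards from $T_a$, every ancestor-merge inside $T_a$ used two trees whose leaves are contained in $C_T(a)$, and these are disjoint from the leaves of $T_b$. Hence the sub-sequence of merges producing $T_a$ is a well-defined sub-execution, it starts from $n_a$ single nodes, and at each of its steps it merges the two smallest trees among those destined for $T_a$ — but since Algorithm \ref{gfb} always takes a globally smallest tree, and all the "$T_a$-trees" and "$T_b$-trees" coexist in the same $treeset$, one must check that the relative order in which $T_a$-merges happen is still "merge the two smallest $T_a$-trees". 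This follows because if two $T_a$-trees are merged, they were the two globally smallest trees, hence in particular the two smallest among all $T_a$-trees present at that time.

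The main obstacle I anticipate is precisely this interleaving issue: a priori the algorithm, run on $n$ leaves, might merge a $T_a$-tree with a $T_b$-tree, which would contradict the very hypothesis that $T_a$ and $T_b$ are the two maximal pending subtrees of the output. So the real content is to argue that \emph{because} the final decomposition is $T=(T_a,T_b)$, no cross-merge ever occurred before the last step — equivalently, the set of leaves of $T_a$ is exactly the set of leaves that get clustered together strictly before they are joined to anything in $T_b$. Once that is established, the restriction of the run to the $T_a$-trees satisfies the loop invariant of Algorithm \ref{gfb} verbatim (always pop the two smallest, merge, push), so by the uniqueness of the GFB tree (the Remark preceding this lemma) that restricted run produces $T^{gfb}_{n_a}$, i.e. $T_a = T^{gfb}_{n_a}$, and symmetrically $T_b = T^{gfb}_{n_b}$. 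I would keep the write-up informal about the bookkeeping and lean on determinism plus the uniqueness remark to avoid a heavy formalization of sub-executions.
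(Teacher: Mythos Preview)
Your proposal is correct and takes essentially the same approach as the paper: both argue that, prior to the final root-merge, the merges building $T_a$ and those building $T_b$ proceed independently, so each maximal pending subtree is itself the output of a run of Algorithm~\ref{gfb} on its own leaf count. The paper's version is much terser (it simply asserts that the two leaf sets ``do not influence each other''), whereas you explicitly identify and resolve the interleaving subtlety and invoke the uniqueness remark to finish---but the underlying idea is the same.
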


\begin{proof}
Let $T = (T_a,T_b)$ be a GFB tree and let $n_a$ and $n_b$ denote the number of leaves of $T_a$ and $T_b$, respectively.
This means that Algorithm \ref{alg_gfb} induces a bipartition of the $n$ leaves into two disjoint sets of sizes $n_a$ and $n_b$, respectively. Note that up to the last iteration of the while-loop in Algorithm \ref{alg_gfb} these two sets are independent of each other. 

Now, applying Algorithm \ref{alg_gfb} to $n_a$ and $n_b$ leaves, respectively, results in two unique GFB trees $T^{gfb}_{n_a}$ and $T^{gfb}_{n_b}$ with $n_a$ and $n_b$ leaves.  
As the leaf sets of $T_a$ and $T_b$ of sizes $n_a$ and $n_b$, respectively, are independent and do not influence each other, this implies $T_a=T^{gfb}_{n_a}$ and $T_b=T^{gfb}_{n_b}$. This completes the proof.
\end{proof}

\begin{figure}[htbp]
	\centering
	\includegraphics[scale=0.175]{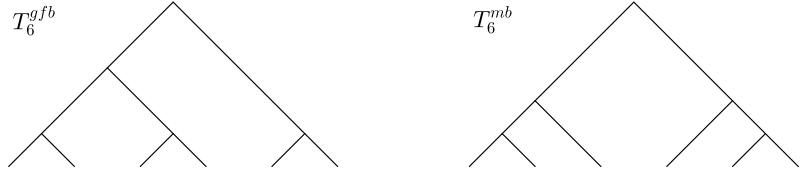}
	\caption{\emph{GFB tree} and \emph{maximally balanced tree} on 6 leaves. Both trees have minimal Colless index, namely $c_6 = \mathcal{C}(T^{gfb}_6)=\mathcal{C}(T^{mb}_6)=2$.}
	\label{Fig_ExampleGFB}
\end{figure}

In the following we will see that GFB trees always have minimal Colless index, even though in general the GFB tree $T_n^{gfb}$ is different from the maximally balanced tree $T_n^{mb}$ on $n$ leaves (cf. Figure \ref{Fig_ExampleGFB}). 
We can, however, characterize GFB trees in terms of the sizes of their maximal pending subtrees. This information will be very useful in subsequent analyses, in particular when showing that GFB trees have minimal Colless index (cf. Theorem \ref{GFB_is_minimal}).

\begin{proposition} \label{GFB_Decomposition}
Let $T_n^{gfb}$ be a GFB tree with $n\geq 2$ leaves and standard decomposition $T_n^{gfb}=(T_a,T_b)$. Let $n_a$ and $n_b$ denote the number of leaves of $T_a$ and $T_b$, respectively, such that without loss of generality, $n_a \geq n_b$. Let $k_n \coloneqq \lceil \log_2 (n) \rceil$, i.e. $n \in (2^{k_n-1}, 2^{k_n}]$. Then, we have:
	\begin{enumerate}
	\item If $n \in (2^{k_n-1}, 3 \cdot 2^{k_n-2})$, we have $n_a = n-2^{k_n-2}$ and $n_b = 2^{k_n-2}$. In particular, $T_b$ is the fully balanced tree of height $k_n-2$ and we have $k_{n_a} \coloneqq \lceil \log_2 (n_a) \rceil=k_n-1$. 
	\item If $n = 3 \cdot 2^{k_n-2}$, we have $n_a = 2^{k_n-1}$ and $n_b = 2^{k_n-2}$. In particular, $T_a$ is the fully balanced tree of height $k_n-1$ and $T_b$ is the fully balanced tree of height $k_n-2$.
	\item If $n \in (3 \cdot 2^{k_n-2}, 2^{k_n}]$, we have $n_a = 2^{k_n-1}$ and $n_b=n-2^{k_n-1}$. In particular, $T_a$ is the fully balanced tree of height $k_n-1$ and we have $k_{n_b} \coloneqq \lceil \log_2 (n_b) \rceil= k_n-1$.
	\end{enumerate}
\end{proposition}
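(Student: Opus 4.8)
The plan is to prove the statement by strong induction on $n$ via a direct analysis of Algorithm~\ref{gfb}, using Lemma~\ref{gfb_subtrees} to recognize the maximal pending subtrees as GFB trees on fewer leaves. The base cases $n=2,3$ follow by running the algorithm by hand. For the inductive step I would distinguish whether $n$ is even or odd, since the two cases behave quite differently.

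For even $n=2m$ I would first establish a \enquote{doubling lemma}: in a run of Algorithm~\ref{gfb} on $2m$ single-node trees the first $m$ iterations each merge two single-node trees, so after $m$ iterations the treeset consists of exactly $m$ cherries; from then on every tree in the treeset has even size, so the assignment \enquote{tree of size $2s\mapsto$ tree of size $s$} is an order isomorphism between this treeset and that of a hypothetical run on $m$ single-node trees, and hence the two runs perform the same sequence of merges. It follows that $T^{gfb}_{2m}$ is obtained from $T^{gfb}_m$ by replacing each leaf by a cherry, so $n_a(2m)=2\,n_a(m)$ and $n_b(2m)=2\,n_b(m)$, and a maximal pending subtree of $T^{gfb}_{2m}$ is the fully balanced tree of height $h$ precisely when the corresponding subtree of $T^{gfb}_m$ is the fully balanced tree of height $h-1$. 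Since $k_{2m}=k_m+1$, each of the three intervals for $m$ doubles to the corresponding interval for $2m$, so the three formulas and the accompanying fully-balanced-height and $k_{n_a}=k_n-1$ (resp.\ $k_{n_b}=k_n-1$) assertions all follow from the inductive hypothesis for $m$ by straightforward arithmetic.

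For odd $n$ I would trace the algorithm directly: the first $(n-1)/2$ iterations pair up single-node trees, leaving one single-node tree and $(n-1)/2$ cherries, and iteration $(n+1)/2$ merges that single-node tree with a cherry into $T^{gfb}_3$. From this point the execution is a run of the algorithm on the multiset consisting of one copy of $T^{gfb}_3$ and $(n-3)/2$ cherries. Because $T^{gfb}_3$ has three leaves while every other tree has even size, $T^{gfb}_3$ is untouched until at most one cherry remains; whether it is then merged with a leftover cherry (producing a tree on $5$ leaves) or with a freshly built $T^{fb}_{k_n-2}$ (producing a tree on $7$ leaves) is controlled by the parity of $(n-3)/2$, i.e.\ by $n\bmod 4$. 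Iterating this — the single \enquote{non-power-of-two} tree keeps absorbing one fully balanced block at a time while the remaining blocks keep doubling — and applying the inductive hypothesis to the number of leaves of that growing tree, one reads off the two trees present before the final merge and checks that their leaf numbers are $\min\{2^{k_n-1},\,n-2^{k_n-2}\}$ and the complement, the fully balanced partner having height $k_n-2$ in case~1 and height $k_n-1$ in cases~2 and~3.

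The hard part will be the odd case: it does not reduce to a single smaller instance, and one has to keep precise track of the unique tree in the treeset that is not fully balanced (it starts as $T^{gfb}_3$ and ends as $T^{gfb}_{n_a}$ in case~1 and as $T^{gfb}_{n_b}$ in case~3) as it interleaves with the collection of fully balanced trees being grown from the cherries. A cleaner way to organize everything is to prove a loop invariant for Algorithm~\ref{gfb}: at every stage at most one tree in the treeset fails to be fully balanced; that tree, when it exists, is the GFB tree on its number of leaves; and the remaining, fully balanced, trees have sizes forming a prescribed multiset that depends only on $n$ and the iteration index. Reading this invariant off just before the last merge then yields the claimed decomposition at once and covers the even case as well.
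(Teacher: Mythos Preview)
Your even case is exactly the paper's argument: form $T^{gfb}_n$ from $T^{gfb}_{n/2}$ by replacing every leaf by a cherry, use $k_{n/2}=k_n-1$, and read off the sizes of the maximal pending subtrees from the inductive hypothesis. So there is no difference there.

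For odd $n$ the two arguments diverge. You propose to follow the run of Algorithm~\ref{gfb} step by step, keeping track of the single tree in \emph{treeset} whose leaf number is odd while all other trees are fully balanced, and then read off the last merge from the resulting loop invariant. (One slip: when the size-$3$ tree first gets merged with a size-$4$ block, that block is $T^{fb}_2$, not $T^{fb}_{k_n-2}$; the height-$(k_n-2)$ fully balanced tree only appears at the very last merge in case~1.) This invariant is true and yields the proposition, but making the multiset of sizes precise and checking all the parities is exactly the bookkeeping you flag as \enquote{the hard part}. The paper sidesteps it by proving a separate lemma first: for odd $n\geq 3$, $T^{gfb}_n$ shares a maximal pending subtree with $T^{gfb}_{n-1}$ and also with $T^{gfb}_{n+1}$ (Lemma~\ref{subtree_in_common}). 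Since $n-1$ and $n+1$ are even, their decompositions are already known by the inductive hypothesis via the doubling argument; comparing the two possible ways to add (resp.\ remove) one leaf to $T^{gfb}_{n-1}$ (resp.\ from $T^{gfb}_{n+1}$) leaves only one leaf partitioning for $T^{gfb}_n$ that is compatible with both neighbours. What your direct trace buys is a self-contained proof with no auxiliary lemma; what the paper's route buys is that the odd case becomes a short comparison of four candidate leaf partitionings, with no explicit multiset of block sizes to maintain.
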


The proof of Proposition \ref{GFB_Decomposition} is given in the appendix. However, it has an interesting consequence: 

\begin{corollary}  \label{cor_gfb}
Let $n \in \mathbb{N}_{\geq 2}$ and $n \neq 3 \cdot 2^{k_n-2}$, where $k_n = \lceil \log_2(n) \rceil$. 
Then, $T_{n-1}^{gfb}, T_{n}^{gfb}$ and $T_{n+1}^{gfb}$ have a common maximal pending subtree, which is fully balanced: 
	\begin{enumerate}[\rm (i)]
	\item If $n \in (2^{k_n-1}, 3 \cdot 2^{k_n-2})$, this common subtree is a fully balanced tree of height $k_n-2$.
	\item If $n \in (3 \cdot 2^{k_n-2}, 2^{k_n}]$, this common subtree is a fully balanced tree of height $k_n-1$.
	\end{enumerate}
\end{corollary}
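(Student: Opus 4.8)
The plan is to derive Corollary \ref{cor_gfb} directly from Proposition \ref{GFB_Decomposition} by analyzing which case of that proposition applies to each of the three consecutive integers $n-1$, $n$, $n+1$, and then checking that the relevant fully balanced pending subtree coincides. The key observation driving everything is that for $n$ bounded away from the ``power of three halves'' value $3\cdot 2^{k_n-2}$, the integers $n-1$, $n$, $n+1$ all lie in the same half-interval of $(2^{k_n-1},2^{k_n}]$ relative to $3\cdot 2^{k_n-2}$, so that the same case (Case 1 or Case 3) of Proposition \ref{GFB_Decomposition} applies to all three. The only subtlety is handling the endpoints of $(2^{k_n-1},2^{k_n}]$, where $n\pm 1$ might fall into an adjacent dyadic block and thus have a different value of $k$.

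First I would fix $k \coloneqq k_n = \lceil \log_2(n)\rceil$, so that $n \in (2^{k-1},2^k]$, and split into the two cases of the statement. For part (i), assume $n \in (2^{k-1}, 3\cdot 2^{k-2})$. Since $n$ is an integer strictly between $2^{k-1}$ and $3\cdot 2^{k-2}$, we have $2^{k-1}+1 \le n \le 3\cdot 2^{k-2}-1$, hence $2^{k-1} \le n-1$ and $n+1 \le 3\cdot 2^{k-2}$. I need to rule out the boundary coincidences: if $n-1 = 2^{k-1}$ then $n-1$ is a power of two and $k_{n-1}=k-1$, but then $n-1 = 2^{k-1}$ is itself the fully balanced tree of height $k-1$; I would note that a fully balanced tree of height $k-1$ has both maximal pending subtrees fully balanced of height $k-2$, so it still has a fully balanced pending subtree of height $k-2$, and the claim holds. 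Otherwise $n-1 \in (2^{k-1}, 3\cdot 2^{k-2})$ with $k_{n-1}=k$, so Case 1 of Proposition \ref{GFB_Decomposition} gives that $T_{n-1}^{gfb}$ has $T_b$ equal to the fully balanced tree of height $k-2$. Similarly $n \in (2^{k-1},3\cdot 2^{k-2})$ gives the same for $T_n^{gfb}$ by Case 1. For $n+1$: if $n+1 < 3\cdot 2^{k-2}$ then $n+1 \in (2^{k-1}, 3\cdot 2^{k-2})$ and Case 1 applies again; if $n+1 = 3\cdot 2^{k-2}$, then $n = 3\cdot 2^{k-2}-1$, which is explicitly excluded only by $n \ne 3\cdot 2^{k_n-2}$ — here $n+1$ hits Case 2 of Proposition \ref{GFB_Decomposition}, which says $T_b$ is the fully balanced tree of height $k-2$, so the conclusion still holds. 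In all subcases the common pending subtree is the fully balanced tree of height $k_n-2$, proving (i).

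For part (ii), assume $n \in (3\cdot 2^{k-2}, 2^k]$, so $3\cdot 2^{k-2}+1 \le n \le 2^k$. Then $n-1 \ge 3\cdot 2^{k-2}$ and $n+1 \le 2^k+1$. If $n-1 = 3\cdot 2^{k-2}$, Case 2 of Proposition \ref{GFB_Decomposition} gives $T_a$ fully balanced of height $k-1$; if $n-1 > 3\cdot 2^{k-2}$, then $n-1 \in (3\cdot 2^{k-2}, 2^k]$ and Case 3 gives the same. For $n$ itself, Case 3 applies directly. For $n+1$: if $n < 2^k$ then $n+1 \le 2^k$, so $n+1 \in (3\cdot 2^{k-2}, 2^k]$ (using $k_{n+1}=k$ since $2^{k-1} < n+1 \le 2^k$) and Case 3 applies; if $n = 2^k$ then $n+1 = 2^k+1 \in (2^k, 2^{k+1})$, so $k_{n+1} = k+1$, and I would apply Case 1 of Proposition \ref{GFB_Decomposition} to $n+1$ with $k_{n+1}=k+1$: since $2^k+1 \in (2^{k}, 3\cdot 2^{k-1})$, we get $T_b$ (for $T_{n+1}^{gfb}$) is the fully balanced tree of height $k_{n+1}-2 = k-1$. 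In every subcase the common pending subtree is the fully balanced tree of height $k_n-1$, which proves (ii).

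The main obstacle — really the only one — is bookkeeping the boundary cases where one of $n-1$, $n+1$ slips into an adjacent situation: either into Case 2 of Proposition \ref{GFB_Decomposition} (when $n\pm1 = 3\cdot 2^{k-2}$), or across a dyadic boundary so that $k_{n\pm1} \ne k_n$ (only possible at $n-1 = 2^{k-1}$ or $n = 2^k$). I would handle these explicitly as above, checking each time that the named fully balanced subtree still has the claimed height; the reason nothing breaks is precisely that Case 2 produces \emph{both} a height-$(k-2)$ and a height-$(k-1)$ fully balanced pending subtree, so it is compatible with either neighboring regime, and at $n=2^k$ the tree $T_n^{gfb}=T^{fb}_k$ has both pending subtrees fully balanced of height $k-1$, matching part (ii). Once the case analysis is organized this way, the corollary follows immediately.
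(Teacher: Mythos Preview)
Your proof is correct and follows essentially the same route as the paper's: both argue the corollary as a direct case-by-case consequence of Proposition~\ref{GFB_Decomposition}, handling the boundary situations $n-1=2^{k_n-1}$, $n\pm 1 = 3\cdot 2^{k_n-2}$, and $n=2^{k_n}$ separately and checking in each that the relevant fully balanced pending subtree has the claimed height. The only cosmetic difference is that for $n-1=2^{k_n-1}$ you argue directly that $T_{n-1}^{gfb}=T^{fb}_{k_n-1}$ has pending subtrees of height $k_n-2$, whereas the paper cites Part~3 of Proposition~\ref{GFB_Decomposition} with $k_{n-1}=k_n-1$ to reach the same conclusion.
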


\begin{proof}
The claimed statements are a direct consequence of Proposition  \ref{GFB_Decomposition}:
\begin{enumerate}[(i)]
\item If $n \in (2^{k_n-1}, 3 \cdot 2^{k_n-2})$, we distinguish between two cases:
	\begin{itemize}
	\item If $n-1 > 2^{k_n-1}$, i.e. if $n-1$, $n$ and $n+1$ are all in $(2^{k_n-1}, 3 \cdot 2^{k_n-2}]$, by Proposition \ref{GFB_Decomposition}, Parts 1 and 2,  $T_{n-1}^{gfb}, T_n^{gfb}$ and $T_{n+1}^{gfb}$ all contain a fully balanced tree of height $k_n-2$ as a maximal pending subtree.
	\item If $n-1 = 2^{k_n-1}$, by Proposition \ref{GFB_Decomposition}, Part 3, $T_{n-1}^{gfb}$ contains a fully balanced tree of height $(k_n-1)-1 = k_n-2$ as a maximal pending subtree (since $k_{n-1} = \lceil \log_2(n-1) \rceil = \lceil \log_2(2^{k_n-1}) \rceil = k_n-1$). Moreover, by Proposition \ref{GFB_Decomposition}, Part 1, $T_n^{gfb}$ and $T_{n+1}^{gfb}$ also contain a fully balanced tree of height $k_n-2$ as a maximal pending subtree.
 	\end{itemize}

\item If $n \in (3 \cdot 2^{k_n-2}, 2^{k_n}]$, we again distinguish between two cases:
	\begin{itemize}
	\item If $n \in (3 \cdot 2^{k_n-2}, 2^{k_n})$, then we have $n-1$ and $ n+1 \in [3 \cdot 2^{k_n-2}, 2^{k_n}]$ and by Proposition \ref{GFB_Decomposition}, Parts 2 and 3, $T_{n-1}^{gfb}, T_n^{gfb}$ and $T_{n+1}^{gfb}$ all contain a fully balanced tree of height $k_n-1$ as a maximal pending subtree.
	\item If $n = 2^{k_n}$, then $n-1 = 2^{k_n}-1$ and $n+1 = 2^{k_n}+1$. By Proposition 2, Part 2, $T_n^{gfb}$ and $T_{n-1}^{gfb}$ contain a fully balanced tree of height $k_n-1$ as a maximal pending subtree. Now, as $k_{n+1} = \lceil \log_2(n+1) \rceil = \lceil \log_2(2^{k_n}+1) \rceil = k_n+1$, by Proposition \ref{GFB_Decomposition}, Part 1, $T_{n+1}^{gfb}$ contains a fully balanced tree of height $(k_n+1)-2 = k_n-1$ as a maximal pending subtree. This completes the proof.
	\end{itemize}
\end{enumerate}
\end{proof}

We will now show that GFB trees always have minimal Colless index, i.e. we will prove the following theorem:
\begin{theorem} \label{GFB_is_minimal}
Let $T_n^{gfb}$ be a GFB tree with $n$ leaves. Then, $T_n^{gfb}$ has minimal Colless index, i.e. $\mathcal{C}(T_n^{gfb}) = c_n$.
\end{theorem}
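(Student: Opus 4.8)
The plan is to prove $\mathcal{C}(T_n^{gfb}) = c_n$ by strong induction on $n$, using the recursive characterization of $c_n$ from Theorem~\ref{colless_minimum}, the decomposition of GFB trees given in Proposition~\ref{GFB_Decomposition}, Lemma~\ref{colless_sum} to compute $\mathcal{C}$ from the subtree sizes, and the inductive hypothesis applied to the (strictly smaller) maximal pending subtrees $T_a$ and $T_b$, which are themselves GFB trees by Lemma~\ref{gfb_subtrees}. The base cases $n=1,2$ are immediate since there is a unique tree on one or two leaves and its Colless index is $0 = c_1 = c_2$.

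For the inductive step I would split according to the three cases of Proposition~\ref{GFB_Decomposition} (together with the boundary subcase $n-1 = 2^{k_n-1}$ handled inside the argument where it matters). In each case Proposition~\ref{GFB_Decomposition} pins down $n_a$ and $n_b$ exactly, and moreover tells us that one of the two subtrees is a fully balanced tree, hence has Colless index $0$ by Theorem~\ref{min_colless}; by the inductive hypothesis the other subtree has Colless index $c_{n_a}$ or $c_{n_b}$. Then Lemma~\ref{colless_sum} gives
\[
\mathcal{C}(T_n^{gfb}) = c_{n_a} + 0 + (n_a - n_b)
\]
(or with the roles of $a,b$ swapped in Case~3), and the task reduces to the purely numerical identity that this equals $c_n$. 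For instance in Case~1 we have $n_a = n - 2^{k_n-2}$, $n_b = 2^{k_n-2}$, so I must show $c_{n - 2^{k_n-2}} + n - 2^{k_n-1} = c_n$; in Case~2 the right-hand side is $0 + 0 + 2^{k_n-2}$ and I must show $c_{3\cdot 2^{k_n-2}} = 2^{k_n-2}$; in Case~3 it is $c_{n - 2^{k_n-1}} + n - 2^{k_n-1}$ paired with $\mathcal{C}(T_a)=0$, and I must show this equals $c_n$.

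These numerical identities are where the real work lies, and this is the step I expect to be the main obstacle. The cleanest route is to derive them from the recursions of Theorem~\ref{colless_minimum} by a secondary induction on $k_n$ (peeling off factors of two via $c_{2m}=2c_m$ until the argument lands near a power of two), or alternatively to invoke the explicit formula of Theorem~\ref{colless_explicit} and the properties in Proposition~\ref{min_colless_properties} --- in particular the symmetry $c_{2^{k_n-1}+j} = c_{2^{k_n}-j}$, which naturally matches Case~1 against Case~3 and should let me handle the two non-trivial cases with essentially one computation. I would also check consistency at the boundaries between the three cases (e.g.\ $n = 2^{k_n-1}+1$, where Case~1 applies and $c_n = k_n$ by Proposition~\ref{min_colless_properties}, and $n = 2^{k_n}$, where Case~3 degenerates into the fully balanced tree and $c_n = 0$ by Theorem~\ref{min_colless}), to make sure the inductive hypothesis is being applied to genuinely smaller instances and that no case is left uncovered. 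Once these identities are established, assembling them with Lemma~\ref{colless_sum} completes the induction.
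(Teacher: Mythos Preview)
Your proposal is essentially the paper's approach: strong induction on $n$, case split via Proposition~\ref{GFB_Decomposition}, then reduce (via Lemma~\ref{colless_sum}, Lemma~\ref{gfb_subtrees}, Theorem~\ref{min_colless}, and the inductive hypothesis) to a purely numerical identity about $c_n$. The paper carries out the numerical step using the explicit formula of Theorem~\ref{colless_explicit} together with Lemma~\ref{gfb_technical_lemma}, which identifies the $i=0$ term $\frac{s(n\cdot 2^{1-k_n})}{2^{1-k_n}}$ as exactly $n-2^{k_n-1}$ in Case~1 and $2^{k_n}-n$ in Case~3, so that the remaining sum telescopes with $c_{n_a}$ (respectively $c_{n_b}$) after a shift of index and an application of Lemma~\ref{subbaditivity_of_s}; this is more direct than a secondary induction on $k_n$. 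One slip to fix: in Case~3 the root balance is $n_a-n_b = 2^{k_n-1}-(n-2^{k_n-1}) = 2^{k_n}-n$, not $n-2^{k_n-1}$.
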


In order to prove Theorem \ref{GFB_is_minimal}, we require the following technical lemma, the proof of which can be found in the Appendix.

\begin{lemma} \label{gfb_technical_lemma}
Let $s(x) = \min\limits_{z \in \mathbb{Z}}\vert  x-z \vert$, i.e. $s(x)$ is the distance from $x$ to the nearest integer. Let $n \in \mathbb{N}$ and let $k_n \coloneqq \lceil \log_2(n) \rceil$. Then,
	\begin{enumerate}
	\item For $n \in (2^{k_n-1}, 3 \cdot 2^{k_n-2}]$, we have
		\begin{align*}
		\frac{s(n \cdot 2^{1-k_n})}{2^{1-k_n}} &= n - 2^{k_n-1}.
		\end{align*}				
	\item For $n \in (3 \cdot 2^{k_n-2}, 2^{k_n}]$, we have
		\begin{align*}
		\frac{s(n \cdot 2^{1-k_n})}{2^{1-k_n}} &= 2^{k_n}-n.
		\end{align*}
	\end{enumerate}
\end{lemma}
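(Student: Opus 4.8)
The plan is to reduce the claim to an elementary fact about the single real number $x \coloneqq n \cdot 2^{1-k_n}$ and the location of its nearest integer. First I would record that $k_n = \lceil \log_2(n) \rceil$ means precisely $n \in (2^{k_n-1}, 2^{k_n}]$; dividing this chain of inequalities by $2^{k_n-1}$ gives $x \in (1,2]$, so the integer nearest to $x$ is either $1$ or $2$. Consequently $s(x) = x-1$ when $x \le 3/2$ and $s(x) = 2-x$ when $x \ge 3/2$ (at $x = 3/2$ both expressions equal $1/2$, which is exactly why the intervals appearing in the two parts of the lemma may share the endpoint $n = 3\cdot 2^{k_n-2}$).

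Next I would translate the two case hypotheses on $n$ into the corresponding statements about $x$: the range $n \in (2^{k_n-1}, 3\cdot 2^{k_n-2}]$ becomes $x \in (1, 3/2]$, and $n \in (3\cdot 2^{k_n-2}, 2^{k_n}]$ becomes $x \in (3/2, 2]$. In the first case $s(x) = x - 1$, so dividing by $2^{1-k_n}$ (equivalently multiplying by $2^{k_n-1}$) and using $x\cdot 2^{k_n-1} = n$ yields
$$\frac{s(n\cdot 2^{1-k_n})}{2^{1-k_n}} = (x-1)\cdot 2^{k_n-1} = n - 2^{k_n-1},$$
which is Part 1. In the second case $s(x) = 2 - x$, and the same manipulation gives
$$\frac{s(n\cdot 2^{1-k_n})}{2^{1-k_n}} = (2-x)\cdot 2^{k_n-1} = 2^{k_n} - n,$$
which is Part 2.

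I do not anticipate a genuine obstacle here: the whole argument is a one-variable substitution combined with the description of $s$ on the interval $(1,2]$. The only point that warrants a line of comment is the shared endpoint $x = 3/2$ (i.e.\ $n = 3\cdot 2^{k_n-2}$), where $s(3/2) = 1/2$ and both formulae return $2^{k_n-2}$, confirming that the half-open intervals in the statement are consistent. It is also worth noting that no lower bound on $k_n$ is needed — the derivation uses only $x \in (1,2]$ and elementary arithmetic with $s$ — so the lemma already holds for $n = 1$ and $n = 2$.
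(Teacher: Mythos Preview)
Your proof is correct and follows essentially the same approach as the paper: both arguments determine that $x = n\cdot 2^{1-k_n}$ lies in $(1,2]$, then split according to whether $x \le 3/2$ or $x > 3/2$ to identify the nearest integer and unwind. Your version is in fact slightly more streamlined, since you obtain $x \in (1,2]$ directly by dividing the defining inequality $2^{k_n-1} < n \le 2^{k_n}$, whereas the paper first establishes monotonicity of $n \mapsto n\cdot 2^{1-k_n}$ on that interval and then evaluates at the endpoints; but this is a cosmetic difference, not a different route.
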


We can now prove Theorem \ref{GFB_is_minimal}.
\begin{proof}[Proof of Theorem \ref{GFB_is_minimal}]
Let $T_n^{gfb}$ be a GFB tree with $n$ leaves. Let $k_n \coloneqq \lceil \log_2(n) \rceil $.
In order to show that $T_n^{gfb}$ has minimal Colless index, we show that $\mathcal{C}(T_n^{gfb}) = c_n = \sum\limits_{i=0}^{k_n-2} \frac{s(2^{i-k_n+1} \cdot n)}{2^{i-k_n+1}}$. We do this by induction on $n$. 

For $n=1$ we have $\mathcal{C}( T_1^{gfb})=0=c_1$, which gives the base case of the induction.

Now, we assume that the statement given in Theorem \ref{GFB_is_minimal} holds for all GFB trees with up to $n-1$ leaves and we show that it also holds for the GFB tree with $n$ leaves. 
 We now distinguish between 3 cases:
	\begin{itemize}
	\item $n = 3 \cdot 2^{k_n-2}$: \\
	By Proposition \ref{GFB_Decomposition} we know that $T_{3 \cdot 2^{k_n-2}}^{gfb}$ has the following standard decomposition: $T_{3 \cdot 2^{k_n-2}}^{gfb} = (T_{k_n-1}^{fb}, T_{k_n-2}^{fb})$. In particular, both maximal pending subtrees of $T_{3 \cdot 2^{k_n-2}}^{gfb}$ are fully balanced trees. By Theorem \ref{min_colless}, we have $\mathcal{C}(T_{k_n-1}^{fb})=0$ and $\mathcal{C}( T_{k_n-2}^{fb})=0$.
	Thus, using Lemma \ref{colless_sum}, we have
	\begin{equation} \label{interval_middle}
	\mathcal{C}(T_{3 \cdot 2^{k_n-2}}^{gfb}) = \underbrace{\mathcal{C}(T_{k_n-1}^{fb})}_{ = 0} + \underbrace{\mathcal{C}(T_{k_n-2}^{fb})}_{=0} +  2^{k_n-1} - 2^{k_n-2} = 2^{k_n-2}.
	\end{equation}
	On the other hand,
	\begin{align*}
	c_{3 \cdot 2^{k_n-2}} &= \sum\limits_{i=0}^{k_n-2} \frac{s(2^{i-k_n+1} \cdot (3 \cdot 2^{k_n-2}))}{2^{i-k_n+1}} \\
	&= \sum\limits_{i=0}^{k_n-2} \frac{s(3 \cdot 2^{i-1})}{2^{i-k_n+1}} \\
	&= \frac{s(3 \cdot 2^{0-1})}{2^{0-k_n+1}} \; \, \text{ as }3 \cdot 2^{i-1} \in \mathbb{Z} \text{ for } i > 0, \text{ and thus } s(3 \cdot 2^{i-1})=0 \text{ for } i > 0. \\
	&= \frac{s(\frac{3}{2})}{2^{1-k_n}} 
	= \frac{\frac{1}{2}}{2^{1-k_n}} 
	= \frac{1}{2} \cdot 2^{k_n-1} 
	= 2^{k_n-2} = \mathcal{C}(T_{3 \cdot 2^{k_n-2}}^{gfb}) \, \text{ by } \eqref{interval_middle}.
	\end{align*}
	Thus, $T_{3 \cdot 2^{k_n-2}}^{gfb}$ has minimal Colless index.
	\item $n \in (2^{k_n-1}, 3 \cdot 2^{k_n-2})$: \\
	By Proposition \ref{GFB_Decomposition} we know that $T_n^{gfb}$ has the following standard decomposition: $T_n^{gfb}=(T_{n-2^{k_n-2}},T_{k_n-2}^{fb})$. In particular, $T_{k_n-2}^{fb}$ is the fully balanced tree of height $k_n-2$, and thus by Theorem \ref{min_colless}, $\mathcal{C}(T_{k_n-2}^{fb})=0$. Moreover, for $T_{n-2^{k_n-2}}$ we have: $\lceil \log_2(n-2^{k_n-2}) \rceil = k_n-1$.
	Thus, using Lemma \ref{colless_sum} and Lemma \ref{max_subtrees}, we have
		\begin{align*}
		\mathcal{C}(T_n^{gfb}) &= \mathcal{C}(T_{n-2^{k_n-2}}) +  \underbrace{\mathcal{C}(T_{k_n-2}^{fb})}_{=0} +  (n - 2^{k_n-2}) - 2^{k_n-2} \\
		&= \mathcal{C}(T_{n-2^{k_n-2}}^{gfb}) + n - 2^{k_n-1} \; \text{ by Lemma \ref{gfb_subtrees}}\\
		&= c_{n-2^{k_n-2}} + n - 2^{k_n-1} \; \text{ by the inductive hypothesis} \\
		&= \sum\limits_{i=0}^{(k_n-1)-2} \frac{s(2^{i-(k_n-1)+1} \cdot (n - 2^{k_n-2}))}{2^{i-(k_n-1)+1}} + n - 2^{k_n-1} \; \text{ by Theorem \ref{colless_explicit}}\\
		&= \sum\limits_{i=0}^{k_n-3} \frac{s(2^{i-k_n+2} \cdot (n - 2^{k_n-2}))}{2^{i-k_n+2}} + n - 2^{k_n-1} \\
		&= \sum\limits_{i=0}^{k_n-3} \frac{s(2^{i-k_n+2} \cdot n -2^i)}{2^{i-k_n+2}}  + n - 2^{k_n-1} \\ 
		&= \sum\limits_{i=0}^{k_n-3} \frac{s(2^{i-k_n+2} \cdot n )}{2^{i-k_n+2}}  + n - 2^{k_n-1} \, \text{ by Lemma \ref{subbaditivity_of_s}, Part 2, as } 2^i \in \mathbb{Z} \\ 
		&= \frac{s(2^{2-k_n} \cdot n)}{2^{2-k_n}} + \frac{s(2^{3-k_n} \cdot n)}{2^{3-k_n}} + \ldots + \frac{s(2^{-1} \cdot n)}{2^{-1}} +n - 2^{k_n-1}\\
		&=  \frac{s(2^{1-k_n} \cdot n)}{2^{1-k_n}} + \frac{s(2^{2-k_n} \cdot n)}{2^{2-k_n}} + \frac{s(2^{3-k_n} \cdot n)}{2^{3-k_n}} + \ldots + \frac{s(2^{-1} \cdot n)}{2^{-1}}\\
		 &\text{ as } n - 2^{k_n-1} =  \frac{s(2^{1-k_n} \cdot n)}{2^{1-k_n}} \text{ by Lemma \ref{gfb_technical_lemma}, Part 1}. \\
		&= \sum\limits_{i=0}^{k_n-2} \frac{s(2^{i-k_n+1} \cdot n)}{2^{i-k_n+1}} 
		= c_n \, \text{ by Theorem \ref{colless_explicit}.}
		\end{align*}
		Thus, $T_n^{gfb}$ has minimal Colless index.
	\item $n \in (3 \cdot 2^{k_n-2}, 2^{k_n}]$: \\
	This case follows analogously to the previous case, but instead of using Part 1 of Lemma \ref{gfb_technical_lemma}, we use Part 2. For completeness the proof can be found in the Appendix.
	\end{itemize}
	Thus, in all cases, $\mathcal{C}(T_n^{gfb})=c_n$, which completes the proof. 
\end{proof}

Note that for $n=2^{k_n}$ Theorem \ref{GFB_is_minimal} together with Theorem \ref{min_colless} implies that $T^{gfb}_{2^{k_n}} = T^{fb}_{k_n}$.

\subsubsection{Further characterizing and counting trees with minimal Colless index}\label{sec_characterization}
So far, we have seen that there are two classes of trees with minimal Colless index, namely maximally balanced trees and GFB trees. However, there are trees that are neither maximally balanced nor GFB, but still have minimal Colless index (e.g. tree $T_1$ depicted in Figure \ref{Fig_Sackin_not_Colless}). 
In the following, we will thus try to further characterize and count trees with minimal Colless index. 

In particular, we will show that the leaf partitioning of $n$ leaves into $n_a$ and $n_b$ as induced by Algorithm \ref{alg_gfb} is the most extreme one a tree with minimal Colless index can have. This means that given a tree $T$ with minimal Colless index, the difference in the number of leaves of its two maximal pending subtrees $n_a - n_b$ cannot be larger than it is in a GFB tree. To be precise, we have the following theorem:

\begin{figure}[htbp]
\centering
\includegraphics[scale=0.45]{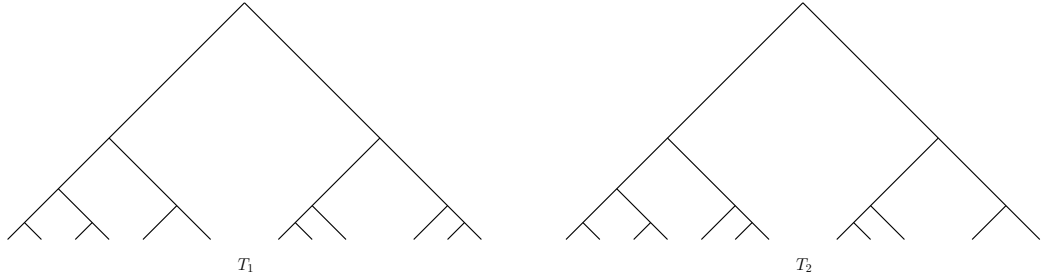}
\caption{Trees $T_1$ and $T_2$ on 12 leaves. We have $\mathcal{C}(T_1)=4=c_{12}$ and $\mathcal{C}(T_2) = 6$. Thus, $T_1$ has minimal Colless index, while $T_2$ does not (this due to Theorem \ref{oddodd}). Note, however, that for the Sackin index (cf. Definition \ref{Def_Sackin} on page \pageref{Def_Sackin}) we have $\mathcal{S}(T_1) = \mathcal{S}(T_2) = 44$, which can be shown to be minimal (cf. Theorem 3 in \citet{Fischer2018}).}
\label{Fig_Sackin_not_Colless}
\end{figure}

\begin{theorem} \label{gfb_extreme}
Let $T^{gfb}_n = (T_a, T_b)$ be a GFB tree on $n$ leaves with $n \in (2^{k_n-1}, 2^{k_n})$ (where $k_n = \lceil \log_2(n) \rceil$), i.e.
$$ (n_a, n_b) = 
	\begin{cases}
	(n - 2^{k_n-2}, 2^{k_n-2}), &\text{ if } n \in (2^{k_n-1}, 3 \cdot 2^{k_n-2}); \\
	(2^{k_n-1}, 2^{k_n-2}), &\text{ if } n = 3 \cdot 2^{k_n-2}; \\
	(2^{k_n-1}, n-2^{k_n-1}), &\text{ if } n \in (3 \cdot 2^{k_n-2}, 2^{k_n}),
	\end{cases}	
	$$
where we have $n_a-n_b = n-2^{k_n-1}$ in the first case, $n_a-n_b=2^{k_n-2}$ in the second case and $n_a-n_b=2^{k_n}-n$ in the last case. Now, suppose that $\widehat{T} = (\widehat{T}_a, \widehat{T}_b)$ is a tree with a more extreme leaf partitioning, i.e. $\widehat{n}_a - \widehat{n}_b > n_a-n_b$ or, to be more precise,
$$ (\widehat{n}_a, \widehat{n}_b) = 
	\begin{cases}
	(n-2^{k_n-2}+j, 2^{k_n-2}-j) \text{ with } j \in \{1, \ldots, 2^{k_n-2}-1\}, &\text{ if } n \in (2^{k_n-1}, 3 \cdot 2^{k_n-2}); \\
	(2^{k_n-1}+j, 2^{k_n-2}-j) \text{ with } j \in \{1, \ldots, 2^{k_n-2}-1 \}, &\text{ if } n = 3 \cdot 2^{k_n-2}; \\
	(2^{k_n-1} + j, n - 2^{k_n-1} - j) \text{ with } j \in \{1, \ldots, n-2^{k_n-1}-1\}, &\text{ if }  n \in (3 \cdot 2^{k_n-2}, 2^{k_n}).
	\end{cases}$$
Then, we have
$$ c_n = \mathcal{C}(T^{gfb}_n) < \mathcal{C}(\widehat{T}),$$
i.e. $\widehat{T}$ does \emph{not} have minimal Colless index (where the first equality follows from Theorem \ref{GFB_is_minimal}). 
\end{theorem}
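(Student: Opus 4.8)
\emph{Strategy.} The plan is to combine the recursive decomposition of the Colless index (Lemmas~\ref{colless_sum} and~\ref{max_subtrees}) with the explicit formula of Theorem~\ref{colless_explicit}, rewriting the latter so that $c_n$ becomes a sum of distances to dyadic lattices, and then to reduce everything to a single elementary estimate. Throughout put $k:=k_n$. For every tree $\widehat{T}=(\widehat{T}_a,\widehat{T}_b)$ on $n$ leaves, Lemma~\ref{colless_sum} gives $\mathcal{C}(\widehat{T})\ge c_{\widehat{n}_a}+c_{\widehat{n}_b}+(\widehat{n}_a-\widehat{n}_b)$, so it suffices to prove $c_{\widehat{n}_a}+c_{\widehat{n}_b}+(\widehat{n}_a-\widehat{n}_b)>c_n$ for each $\widehat{n}_b$ as in the statement. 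By Theorem~\ref{GFB_is_minimal}, together with Proposition~\ref{GFB_Decomposition} and Theorem~\ref{min_colless}, one has $c_n=c_{n_a}+c_{n_b}+(n_a-n_b)$ with one of $c_{n_a},c_{n_b}$ equal to $0$ (the fully balanced GFB subtree); moreover $\widehat{n}_a-\widehat{n}_b=(n_a-n_b)+2j$ in each of the three cases. Substituting and cancelling $(n_a-n_b)$, the inequality to prove becomes: $c_{m+j}+c_{2^{k-2}-j}+2j>c_m$ with $m:=n-2^{k-2}$ in Case~1 ($n\in(2^{k-1},3\cdot2^{k-2})$); $c_{2^{k-1}+j}+c_{2^{k-2}-j}+2j>0$ in Case~2 ($n=3\cdot2^{k-2}$); and $c_{2^{k-1}+j}+c_{p-j}+2j>c_p$ with $p:=n-2^{k-1}$ in Case~3 ($n\in(3\cdot2^{k-2},2^{k})$). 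Case~2 is immediate, and in Cases~1 and~3 we have $m,p\in(2^{k-2},2^{k-1})$, hence $k_m=k_p=k-1$.

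\emph{The distance formula and the key bound.} Substituting $\ell=k_n-1-i$ in Theorem~\ref{colless_explicit} yields $c_n=\sum_{\ell=1}^{k_n-1}2^{\ell}s(n/2^{\ell})$, i.e.\ $c_n$ is the sum over $\ell=1,\dots,k_n-1$ of the distances from $n$ to the nearest multiple of $2^{\ell}$. Lemma~\ref{subbaditivity_of_s} gives subadditivity, $2^{\ell}s((a+b)/2^{\ell})\le 2^{\ell}s(a/2^{\ell})+2^{\ell}s(b/2^{\ell})$, and, for $\ell\le k-2$, the invariance $2^{\ell}s(j/2^{\ell})=2^{\ell}s((2^{k-2}-j)/2^{\ell})$ (since $2^{k-2}\in2^{\ell}\mathbb{Z}$). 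In Case~1, since $k_m=k-1$ and $k_{m+j}\ge k-1$, these give, with $j':=2^{k-2}-j$,
$$c_m-c_{m+j}\ \le\ \sum_{\ell=1}^{k-2}2^{\ell}s(j/2^{\ell})\ =\ \sum_{\ell=1}^{k-2}2^{\ell}s(j'/2^{\ell})\ =\ c_{j'}+(2^{k_{j'}}-j')+(k-2-k_{j'})j',$$
where the last step splits the sum at $\ell=k_{j'}$, using $2^{k_{j'}}s(j'/2^{k_{j'}})=2^{k_{j'}}-j'$ and $2^{\ell}s(j'/2^{\ell})=j'$ for $\ell>k_{j'}$. Hence Case~1 follows once $(2^{k_{j'}}-j')+(k-2-k_{j'})j'<2j=2(2^{k-2}-j')$, i.e.\ once
$$2^{k_{j'}}+(k-1-k_{j'})\,j'<2^{k-1}\qquad\text{for }1\le j'\le 2^{k-2}-1.$$
This holds: with $t:=k_{j'}$, $r:=k-1-t\ge1$ and $j'\le 2^{t}$, the left side is $\le(r+1)2^{t}\le 2^{k-1}$ because $r+1\le 2^{r}$, with strict inequality whenever $j'<2^{k-2}$. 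Case~3 is entirely parallel: one bounds $c_p-c_{p-j}$ via subadditivity and reduces to the same elementary inequality, now for $q:=p-j\le 2^{k-2}$, combined with the trivial estimate $2j>2^{k-1}-2q$ (which holds since $p>2^{k-2}$); alternatively one may deduce Case~3 from Case~1 using the symmetry $c_{2^{k-1}+j}=c_{2^{k}-j}$ of Proposition~\ref{min_colless_properties}, Part~3.

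\emph{Main obstacle.} Since $c_n$ is fractal and in particular not monotone on a dyadic block $[2^{k-1},2^{k}]$, there is no monotonicity argument for $\mathcal{C}$ as a function of the root split, and the crux is to bound how far $c$ can \emph{decrease} when its argument moves past the GFB value. This is precisely what the subadditivity of $x\mapsto 2^{\ell}s(x/2^{\ell})$ and the finite summation range $\ell\le k_n-1$ make possible; the delicate part is the bookkeeping that turns these bounds into the clean inequality $2^{t}+(k-1-t)q\le 2^{k-1}$ and disposes of its boundary case in Case~3.
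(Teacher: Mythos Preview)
Your argument is correct and is a genuine streamlining of the paper's proof. Both proofs rest on the same two ingredients --- Theorem~\ref{colless_explicit} and the subadditivity of $s$ from Lemma~\ref{subbaditivity_of_s} --- but they organise the work very differently. The paper argues by contradiction: assuming $\widehat{T}$ is Colless-minimal, it first uses Lemma~\ref{max_min_Colless} to pin down the possible values of $k_{\widehat{n}_a}$ and $k_{\widehat{n}_b}$, and then, for each of the resulting subcases, expands $\mathcal{C}(T^{gfb}_n)-\mathcal{C}(\widehat{T})$ term by term in the $f_i$-notation until subadditivity forces the difference to be negative. You instead prove the inequality directly: rewriting $c_n=\sum_{\ell=1}^{k_n-1}2^{\ell}s(n/2^{\ell})$ lets you bound $c_m-c_{m+j}$ in one line by $\sum_{\ell\le k-2}2^{\ell}s(j/2^{\ell})$, and the substitution $j'=2^{k-2}-j$ together with the split at $\ell=k_{j'}$ collapses everything to the single elementary inequality $2^{t}+(k-1-t)j'\le 2^{k-1}$ (with $t=k_{j'}$), which follows from $r+1\le 2^{r}$. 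This bypasses Lemma~\ref{max_min_Colless} and the case split on $k_{\widehat{n}_a}$ entirely; what the paper achieves by constraining $k_{\widehat{n}_b}$ you get for free from the boundary case of your inequality.

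Two small remarks. First, your treatment of Case~3 is very compressed: it is worth spelling out that when $q=p-j>2^{k-2}$ one has $k_q=k-1$, the extra sum $\sum_{\ell=k_q}^{k-2}(\cdots)$ is empty, and the inequality is immediate; only when $q\le 2^{k-2}$ does one reduce to the same elementary bound, and there the strict inequality comes from $2j=2(p-q)>2^{k-1}-2q$ rather than from strictness in $2^{t}+(k-1-t)q\le 2^{k-1}$. Second, your parenthetical ``alternatively one may deduce Case~3 from Case~1 using the symmetry of Proposition~\ref{min_colless_properties}, Part~3'' is not straightforward to carry out (the ranges of $j$ and the terms involved do not match up under the reflection $a\mapsto 2^{k-1}-a$), so I would drop it and rely on the direct argument, which is sound.
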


The proof of this theorem requires the following lemma, which provides an upper bound on the maximal minimal Colless index for any given $n \in (2^{k_n-1}, 2^{k_n})$. This upper bound is also depicted in Figure \ref{Fig_MinimalColless}.

\begin{lemma} \label{max_min_Colless}
Let $n \in (2^{k_n-1}, 2^{k_n})$ with $k_n = \lceil \log_2(n) \rceil$. Let $\bar{c}_n$ denote the maximal minimal Colless index for $n \in (2^{k_n-1}, 2^{k_n})$, i.e. 
$$ \bar{c}_n = \max\limits_{n \in (2^{k_n-1}, 2^{k_n})} c_n.$$
Then, we have
$$ \bar{c}_n < 2^{k_n-1}.$$ 
\end{lemma}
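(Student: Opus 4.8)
The plan is to use the explicit formula from Theorem~\ref{colless_explicit} together with the subadditivity of $s$ from Lemma~\ref{subbaditivity_of_s}. Writing $f_i(n) = s(2^{i-k_n+1}\cdot n)/2^{i-k_n+1}$, Theorem~\ref{colless_explicit} gives $c_n = \sum_{i=0}^{k_n-2} f_i(n)$. Since each summand is nonnegative, it suffices to bound each $f_i(n)$ from above and to show the bounds sum to something strictly less than $2^{k_n-1}$. The natural bound is $f_i(n) = s(2^{i-k_n+1}n)/2^{i-k_n+1} \le \tfrac{1}{2}\cdot 2^{k_n-1-i} = 2^{k_n-2-i}$, because $s(x)\le \tfrac12$ always. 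Summing this crude bound over $i=0,\dots,k_n-2$ gives $\sum_{i=0}^{k_n-2} 2^{k_n-2-i} = 2^{k_n-1}-1 < 2^{k_n-1}$, which is exactly the desired inequality (with room to spare, since the last term $f_{k_n-2}(n)$ actually equals $1$ for $n$ odd, matching the bound, but earlier terms are typically strictly smaller).

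First I would fix $n\in(2^{k_n-1},2^{k_n})$ and recall $k_n=\lceil\log_2 n\rceil$. Then I would invoke Theorem~\ref{colless_explicit} to write $c_n=\sum_{i=0}^{k_n-2} f_i(n)$ with $f_i$ as above. Next, for each $i$ in this range I would observe that $2^{i-k_n+1}\le 2^{(k_n-2)-k_n+1}=\tfrac12<1$, so the exponent $i-k_n+1$ is negative and $2^{i-k_n+1}$ is a genuine scaling factor; using $s(x)\le\tfrac12$ for all $x\in\mathbb{R}$ (immediate from the definition of $s$ as the distance to the nearest integer) I get
\[
f_i(n)=\frac{s(2^{i-k_n+1}\cdot n)}{2^{i-k_n+1}}\le \frac{1/2}{2^{i-k_n+1}}=2^{k_n-2-i}.
\]
Summing over $i=0,\dots,k_n-2$ yields $c_n \le \sum_{i=0}^{k_n-2}2^{k_n-2-i}=\sum_{j=0}^{k_n-2}2^{j}=2^{k_n-1}-1$, hence $\bar c_n=\max_{n\in(2^{k_n-1},2^{k_n})} c_n \le 2^{k_n-1}-1<2^{k_n-1}$, as claimed.

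There is essentially no hard part here: the only mild subtlety is making sure the index range $0\le i\le k_n-2$ is nonempty and that the crude bound $s(x)\le\tfrac12$ is enough — which it is, since the geometric sum already falls one short of $2^{k_n-1}$. One could alternatively run an induction using the recursions $c_{2m}=2c_m$ and $c_{2m+1}=c_{m+1}+c_m+1$ from Theorem~\ref{colless_minimum}, checking that the bound $c_n\le 2^{k_n-1}-1$ is preserved, but the direct estimate via the explicit formula is cleaner and I would present that. If a sharper statement were needed (e.g.\ replacing $<2^{k_n-1}$ by $\le 2^{k_n-1}-1$, or identifying where the maximum is attained), the same computation already delivers it, and Proposition~\ref{min_colless_properties}(1) shows $c_{2^{k_n-1}+1}=k_n-1$ is far from this bound, indicating the maximum over the interval is attained near its midpoint where $s$ is repeatedly close to $\tfrac12$.
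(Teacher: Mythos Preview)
Your proof is correct and follows essentially the same route as the paper: apply the explicit formula from Theorem~\ref{colless_explicit}, bound each summand using $s(x)\le\tfrac12$, and sum the resulting geometric series to get $c_n\le 2^{k_n-1}-1<2^{k_n-1}$. The only cosmetic difference is that you mention Lemma~\ref{subbaditivity_of_s} in your plan but (rightly) never actually invoke subadditivity---only the trivial bound $s(x)\le\tfrac12$ is needed, exactly as in the paper.
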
 

\begin{proof}
By Theorem \ref{colless_explicit}, we have for the minimal Colless index
\begin{align*}
c_n &= \sum\limits_{i=0}^{k_n-2} \frac{s(2^{i-k_n+1} \cdot n)}{2^{i-k_n+1}} \\
&= \frac{s(2^{1-k_n} \cdot n)}{2^{1-k_n}} + \frac{s(2^{2-k_n} \cdot n)}{2^{2-k_n}} + \ldots + \frac{s(2^{-2} \cdot n)}{2^{-2}} + \frac{s(2^{-1} \cdot n)}{2^{-1}} \\
&\leq \frac{\frac{1}{2}}{2^{1-k_n}} + \frac{\frac{1}{2}}{2^{2-k_n}} + \ldots + \frac{\frac{1}{2}}{2^{-2}} + \frac{\frac{1}{2}}{2^{-1}}, \, \text{ as } s(x) = \min\limits_{z \in \mathbb{Z}}\vert  x-z \vert \leq \frac{1}{2} \, \forall x \\
&= \sum\limits_{m=0}^{k_n-2} 2^m 
= \sum\limits_{m=0}^{k_n} 2^m - 2^{k_n-1} - 2^{k_n} 
= (2^{k_n+1} - 1) - 2^{k_n-1} - 2^{k_n} \\
&= 2^{k_n-1} - 1 < 2^{k_n-1}.
\end{align*}
This means that $c_n$ is bounded from above by $2^{k_n-1}$. 
In particular, $\bar{c}_n = \max\limits_{n \in (2^{k_n-1}, 2^{k_n})} c_n < 2^{k_n-1}$, which completes the proof. 
\end{proof}

We are now in a position to prove Theorem \ref{gfb_extreme}, which is divided into three subcases. For $n = 3 \cdot 2^{k_n-2}$ the proof is straightforward, while the cases $n \in (2^{k_n-1}, 3 \cdot 2^{k_n-2})$ and $n \in (3 \cdot 2^{k_n-2}, 2^{k_n})$ are more technical and require Lemma \ref{subbaditivity_of_s} and Lemma \ref{max_min_Colless}. 
\begin{proof}[Proof of Theorem \ref{gfb_extreme}]
Let $n \in (2^{k_n-1}, 2^{k_n})$ with $k_n = \lceil \log_2(n) \rceil$. Let $T^{gfb}_n=(T_a,T_b)$ be the GFB tree on $n$ leaves. We now distinguish between three cases (only case 1. is given here; the other two cases are shown in the Appendix):
\begin{enumerate}
\item $n \in (2^{k_n-1}, 3 \cdot 2^{k_n-2})$: \\
From Proposition \ref{GFB_Decomposition}, we have for $T^{gfb}_n=(T_a,T_b)$ that $n_a=n-2^{k_n-2}$ and $n_b=2^{k_n-2}$. In particular, $T_b$ is the fully balanced tree of height $k_n-2$ and we have $\mathcal{C}(T_b)=c_{n_b}=0$ by Theorem \ref{min_colless}. For $T_a$ we have $k_{n_a} = \lceil \log_2(n_a) \rceil = k_n-1$.  
Now, consider $\mathcal{C}(T^{gfb}_n)$. By Lemmas \ref{colless_sum} and \ref{max_subtrees} and by Theorem \ref{GFB_is_minimal} we have
\begin{align}
\mathcal{C}(T^{gfb}_n) &= n_a - n_b + c_{n_a} + c_{n_b} \nonumber \\
&= n_a - n_b + c_{n_a} \label{Colless_T_cna} \\
&= n_a - n_b + c_{n-2^{k_n-2}} \nonumber \\
&= n_a - n_b + \sum\limits_{i=0}^{k_n-3} \frac{s(2^{i-k_n+2} \cdot (n-2^{k_n-2}))}{2^{i-k_n+2}} \nonumber \\
&= n_a - n_b + \sum\limits_{i=0}^{k_n-3} \frac{s(2^{i-k_n+2} \cdot n - 2^i)}{2^{i-k_n+2}} \nonumber \\
&= n_a - n_b + \sum\limits_{i=0}^{k_n-3} \frac{s(2^{i-k_n+2} \cdot n)}{2^{i-k_n+2}} \, \text{ by Lemma \ref{subbaditivity_of_s}, Part 2, as } 2^i \in \mathbb{Z} \nonumber \\
&= n_a - n_b + \sum\limits_{i=0}^{k_n-3} \frac{s(2^{i-k_n+2} \cdot ((n+j)-j))}{2^{i-k_n+2}} \nonumber \\
&= n_a - n_b + \sum\limits_{i=0}^{k_n-3} \frac{s(2^{i-k_n+2} \cdot (n+j) + 2^{i-k_n+2} \cdot (-j)) }{2^{i-k_n+2}}. \label{Colless_T_firsthalf}
\end{align}
Now, suppose that $\widehat{T} = (\widehat{T}_a, \widehat{T}_b)$ with $\widehat{n}_a = n_a+j$ and $\widehat{n}_b=n_b-j$ (and $j \in \{1, \ldots, 2^{k_n-2}-1\}$) is also a tree with minimal Colless index, i.e. $\mathcal{C}(\widehat{T}) = \mathcal{C}(T^{gfb}_n)=c_n$. Consider $\mathcal{C}(\widehat{T})$. Again, by Lemmas \ref{colless_sum} and \ref{max_subtrees}, we have
\begin{align}
\mathcal{C}(\widehat{T}) &= \widehat{n}_a - \widehat{n}_b + c_{\widehat{n}_a} + c_{\widehat{n}_b} \nonumber \\
&= (n_a+j) - (n_b-j) + c_{\widehat{n}_a} + c_{\widehat{n}_b} \nonumber \\ 
&= n_a - n_b + 2j + c_{\widehat{n}_a}+ c_{\widehat{n}_b} \nonumber \\
&= n_a - n_b + 2j + \underbrace{c_{n-2^{k_n-2}+j}}_{\geq 0} + \underbrace{c_{2^{k_n-2}-j}}_{\geq 0}. \label{Colless_That_firsthalf}
\end{align}
Now, comparing $\mathcal{C}(T^{gfb}_n)$ (see Equation \eqref{Colless_T_cna}) and $\mathcal{C}(\widehat{T})$ (see Equation \eqref{Colless_That_firsthalf}) it immediately follows that we have $\mathcal{C}(T^{gfb}_n) < \mathcal{C}(\widehat{T})$ if $c_{n_a} < 2j$, which would contradict the minimality of $\mathcal{C}(\widehat{T})$. Thus, we have $c_{n_a} \geq 2j$. We now claim that $c_{n_a} \geq 2j$ implies $k_{\widehat{n}_b} = \lceil \log_2 (\widehat{n}_b) \rceil =k_{n_b} = k_n-2$. To see this, consider the following:
	\begin{itemize}
	\item As $\widehat{n}_b < n_b$, we have $k_{\widehat{n}_b} \leq k_{n_b} = k_n-2$.
	\item Suppose $k_{\widehat{n}_b} \leq k_n-3$. As $\widehat{n}_b = n_b-j = 2^{k_n-2}-j$, this implies $j \geq 2^{k_n-3}$. In particular, $2j \geq 2^{k_n-2}$. \\
	Now, consider $c_{n_a}$. As $k_{n_a}=k_n-1$ it follows from Lemma \ref{max_min_Colless} that
	\begin{align*}
	c_{n_a} < 2^{(k_n-1)-1} = 2^{k_n-2}.
	\end{align*}
	However, as $2j \geq 2^{k_n-2}$, this implies $c_{n_a} < 2j$, which contradicts the assumption that $c_{n_a} \geq 2j$. Thus, $k_{\widehat{n}_b} > k_n-3$.
	\end{itemize}
Thus, in total we have $k_{\widehat{n_b}} = k_n-2$.
This, however, implies that 
$$ j \leq 2^{k_n-3} - 1 < 2^{k_n-3} = \frac{1}{2} \cdot 2^{k_n-2}.$$
In particular, $j \in (0 \cdot 2^{k_n-2}, \frac{1}{2} \cdot 2^{k_n-2})$ and $2^{2-k_n} \cdot j \in (0, \frac{1}{2})$.
This, in turn, implies
	\begin{align}
	\frac{s(2^{2-k_n} \cdot (-j))}{2^{2-k_n}} &= \frac{s(2^{2-k_n} \cdot j)}{2^{2-k_n}} = \frac{2^{2-k_n} \cdot j}{2^{2-k_n}} = j, \label{j}
	\end{align}
a fact that will be used later on.\\

Moreover, we have for $\widehat{T}_a$: $k_{\widehat{n}_a} = \lceil \log_2(\widehat{n}_a) \rceil \in \{k_n-1, k_n\}$:
	\begin{itemize}
	\item As $\widehat{n}_a > n_a$ and $k_{n_a} = k_n-1$, it follows that $k_{\widehat{n}_a} \geq k_n-1$. 
	\item However, as $\widehat{n}_a < n$, it also follows that $k_{\widehat{n}_a} \leq k_n$.
	\end{itemize}
In total $k_{\widehat{n}_a} \in \{k_n-1, k_n\}$. \\

Thus, for $\widehat{T}$ we now distinguish between two cases:
	\begin{enumerate}
	\item $k_{\widehat{n}_a} = k_n-1$ and $k_{\widehat{n}_b} = k_n-2$: \\
		Continuing from Equation \eqref{Colless_That_firsthalf} and using Theorem \ref{colless_explicit}, we have
			\begin{align}
			\mathcal{C}(\widehat{T}) &= n_a - n_b + 2j + c_{n-2^{k_n-2}+j} + c_{2^{k_n-2}-j} \nonumber \\
			&= n_a - n_b  + 2j + \sum\limits_{i=0}^{k_n-3} \frac{s(2^{i-k_n+2} \cdot (n-2^{k_n-2}+j))}{2^{i-k_n+2}} + \sum\limits_{i=0}^{k_n-4} \frac{s(2^{i-k_n+3} \cdot (2^{k_n-2}-j))}{2^{i-k_n+3}} \nonumber \\
			&= n_a - n_b + 2j + \sum\limits_{i=0}^{k_n-3} \frac{s(2^{i-k_n+2} \cdot (n+j) - 2^i)}{2^{i-k_n+2}} + \sum\limits_{i=0}^{k_n-4} \frac{s( 2^{i+1} - 2^{i-k_n+3} \cdot j)}{2^{i-k_n+3}}  \nonumber \\
			&\text{ by Lemma \ref{subbaditivity_of_s}, Part 2, as } 2^{i} \text{ and } 2^{i+1} \in \mathbb{Z} \nonumber \\
			&= n_a - n_b + 2j + \sum\limits_{i=0}^{k_n-3} \frac{s(2^{i-k_n+2} \cdot (n+j))}{2^{i-k_n+2}} + \sum\limits_{i=0}^{k_n-4} \frac{s(2^{i-k_n+3} \cdot (-j))}{2^{i-k_n+3}}  \nonumber  \\
			&= n_a - n_b + 2j + \sum\limits_{i=0}^{k_n-3} \frac{s(2^{i-k_n+2} \cdot (n+j))}{2^{i-k_n+2}} + \sum\limits_{i=1}^{k_n-3} \frac{s(2^{i-k_n+2} \cdot (-j))}{2^{i-k_n+2}} \nonumber  \\
			&= n_a - n_b + 2j + \sum\limits_{i=0}^{k_n-3} \frac{s(2^{i-k_n+2} \cdot (n+j))}{2^{i-k_n+2}} + \sum\limits_{i=0}^{k_n-3} \frac{s(2^{i-k_n+2} \cdot (-j))}{2^{i-k_n+2}} - \frac{s(2^{2-k_n} \cdot (-j))}{2^{2-k_n}} \nonumber \\
	  &= n_a - n_b + 2j + \sum\limits_{i=0}^{k_n-3} \frac{s(2^{i-k_n+2} \cdot (n+j))}{2^{i-k_n+2}} + \sum\limits_{i=0}^{k_n-3} \frac{s(2^{i-k_n+2} \cdot (-j))}{2^{i-k_n+2}} - j \, \text{ by } \eqref{j} \nonumber \\
		&= n_a - n_b + j + \sum\limits_{i=0}^{k_n-3} \frac{s(2^{i-k_n+2} \cdot (n+j))}{2^{i-k_n+2}} + \sum\limits_{i=0}^{k_n-3} \frac{s(2^{i-k_n+2} \cdot (-j))}{2^{i-k_n+2}}. \label{Colless_That_first1_final}
		\end{align}
		Now, as by assumption both $T^{gfb}_n$ and $\widehat{T}$ are trees with minimal Colless index, we have (using Equations \eqref{Colless_T_firsthalf} and \eqref{Colless_That_first1_final})
		\begin{align*}
		0 &= \mathcal{C}(T^{gfb}_n) - \mathcal{C}(\widehat{T}) \\
		&= n_a - n_b + \sum\limits_{i=0}^{k_n-3} \frac{s(2^{i-k_n+2} \cdot (n+j) + 2^{i-k_n+2} \cdot (-j)) }{2^{i-k_n+2}} \\
			&\hspace*{5mm} -n_a + n_b - j - \sum\limits_{i=0}^{k_n-3} \frac{s(2^{i-k_n+2} \cdot (n+j))}{2^{i-k_n+2}} - \sum\limits_{i=0}^{k_n-3} \frac{s(2^{i-k_n+2} \cdot (-j))}{2^{i-k_n+2}}  \\
		&= \sum\limits_{i=0}^{k_n-3} \underbrace{\frac{s(2^{i-k_n+2} \cdot (n+j) + 2^{i-k_n+2} \cdot (-j)) - \big( s (2^{i-k_n+2} \cdot (n+j)) + s(2^{i-k_n+2} \cdot (-j)) \big)}{2^{i-k_n+2}}}_{\leq 0 \text{ by Lemma  \ref{subbaditivity_of_s}, Part 1}} - j \\
		&< 0.
		\end{align*}
		This, however, is a contradiction. Thus, $\mathcal{C}(\widehat{T})$ is not minimal, which completes the proof for this subcase. 
	\item $k_{\widehat{n}_a} = k_n$ and $k_{\widehat{n}_b} = k_n-2$: \\
	Again, continuing from Equation \eqref{Colless_That_firsthalf} and using Theorem \ref{colless_explicit}, we have
	\begin{align}
	\mathcal{C}(\widehat{T}) &= n_a - n_b + 2j + c_{n-2^{k_n-2}+j} + c_{2^{k_n-2}-j} \nonumber \\
	&= n_a - n_b + 2j + \sum\limits_{i=0}^{k_n-2} \frac{s(2^{i-k_n+1} \cdot (n-2^{k_n-2}+j))}{2^{i-k_n+1}} + \sum\limits_{i=0}^{k_n-4} \frac{s(2^{i-k_n+3} \cdot (2^{k_n-2} - j))}{2^{i-k_n+3}} \nonumber \\
	&= n_a - n_b + 2j + \sum\limits_{i=0}^{k_n-2} \frac{s(2^{i-k_n+1} \cdot (n+j) - 2^{i-1})}{2^{i-k_n+1}} + \sum\limits_{i=0}^{k_n-4} \frac{s( 2^{i+1} + 2^{i-k_n+3} \cdot (-j))}{2^{i-k_n+3}} \nonumber \\
	&= n_a -n_b + 2j + \sum\limits_{i=0}^{k_n-2} \frac{s(2^{i-k_n+1} \cdot (n+j) - 2^{i-1})}{2^{i-k_n+1}} + \sum\limits_{i=0}^{k_n-4} \frac{s(2^{i-k_n+3} \cdot (-j))}{2^{i-k_n+3}} \nonumber \\
	&\text{ by Lemma \ref{subbaditivity_of_s}, Part 2, as } 2^{i+1} \in \mathbb{Z} \nonumber \\
	&= n_a - n_b + 2j + \frac{s(2^{1-k_n} \cdot (n+j) - 2^{-1})}{2^{1-k_n}} + \sum\limits_{i=1}^{k_n-2} \frac{s(2^{i-k_n+1} \cdot (n+j) - 2^{i-1})}{2^{i-k_n+1}} \nonumber \\
	&\hspace*{5mm} + \sum\limits_{i=0}^{k_n-4} \frac{s(2^{i-k_n+3} \cdot (-j))}{2^{i-k_n+3}}  \nonumber \\
	&= n_a - n_b + 2j + \frac{s(2^{1-k_n} \cdot (n+j) - 2^{-1})}{2^{1-k_n}} + \sum\limits_{i=0}^{k_n-3} \frac{s(2^{i-k_n+2} \cdot (n+j) - 2^i)}{2^{i-k_n+2}} \nonumber \\
	&\hspace*{5mm} + \sum\limits_{i=0}^{k_n-4} \frac{s(2^{i-k_n+3} \cdot (-j))}{2^{i-k_n+3}} \nonumber \\
	&= n_a - n_b + 2j + \frac{s(2^{1-k_n} \cdot (n+j) - 2^{-1})}{2^{1-k_n}} + \sum\limits_{i=0}^{k_n-3} \frac{s(2^{i-k_n+2} \cdot (n+j))}{2^{i-k_n+2}} \nonumber \\
	&\hspace*{5mm} + \sum\limits_{i=0}^{k_n-4} \frac{s(2^{i-k_n+3} \cdot (-j))}{2^{i-k_n+3}} \text{ by Lemma \ref{subbaditivity_of_s}, Part 2, as } 2^{i} \in \mathbb{Z} \nonumber \\
	&= n_a - n_b + 2j + \frac{s(2^{1-k_n} \cdot (n+j) - 2^{-1})}{2^{1-k_n}} + \sum\limits_{i=0}^{k_n-3} \frac{s(2^{i-k_n+2} \cdot (n+j))}{2^{i-k_n+2}} \nonumber \\
	&\hspace*{5mm} + \sum\limits_{i=1}^{k_n-3} \frac{s(2^{i-k_n+2} \cdot (-j))}{2^{i-k_n+2}}  \nonumber \\
	&= n_a - n_b + 2j + \frac{s(2^{1-k_n} \cdot (n+j) - 2^{-1})}{2^{1-k_n}} + \sum\limits_{i=0}^{k_n-3} \frac{s(2^{i-k_n+2} \cdot (n+j))}{2^{i-k_n+2}} \nonumber \\
	&\hspace*{5mm} + \sum\limits_{i=0}^{k_n-3} \frac{s(2^{i-k_n+2} \cdot (-j))}{2^{i-k_n+2}} -\frac{s(2^{2-k_n} \cdot (-j))}{2^{2-k_n}} \nonumber \\
	&= n_a - n_b + 2j + \frac{s(2^{1-k_n} \cdot (n+j) - 2^{-1})}{2^{1-k_n}} + \sum\limits_{i=0}^{k_n-3} \frac{s(2^{i-k_n+2} \cdot (n+j))}{2^{i-k_n+2}} \nonumber \\
	&\hspace*{5mm} + \sum\limits_{i=0}^{k_n-3} \frac{s(2^{i-k_n+2} \cdot (-j))}{2^{i-k_n+2}} - j \, \text{ by } \eqref{j} . \label{Colless_That_first2_final}
	\end{align}
	Again, as by assumption both $T^{gfb}_n$ and $\widehat{T}$ are trees with minimal Colless index, we have (using Equations \eqref{Colless_T_firsthalf} and \eqref{Colless_That_first2_final})
	\begin{align*}
	0 &= \mathcal{C}(T^{gfb}_n) - \mathcal{C}(\widehat{T}) \\
	&= n_a - n_b + \sum\limits_{i=0}^{k_n-3} \frac{s(2^{i-k_n+2} \cdot (n+j) + 2^{i-k_n+2} \cdot (-j)) }{2^{i-k_n+2}} \\
			&\hspace*{5mm} -n_a + n_b - j - \frac{s(2^{1-k_n} \cdot (n+j) - 2^{-1})}{2^{1-k_n}} - \sum\limits_{i=0}^{k_n-3} \frac{s(2^{i-k_n+2} \cdot (n+j))}{2^{i-k_n+2}} \\
	&\hspace*{5mm} - \sum\limits_{i=0}^{k_n-3} \frac{s(2^{i-k_n+2} \cdot (-j))}{2^{i-k_n+2}} \\
	&= \sum\limits_{i=0}^{k_n-3} \underbrace{\frac{s(2^{i-k_n+2} \cdot (n+j) + 2^{i-k_n+2} \cdot (-j)) - \big( s (2^{i-k_n+2} \cdot (n+j)) + s(2^{i-k_n+2} \cdot (-j)) \big)}{2^{i-k_n+2}}}_{\leq 0 \text{ by Lemma  \ref{subbaditivity_of_s}, Part 1}} \\
	&\hspace*{5mm} - \underbrace{j}_{> 0} - \underbrace{\frac{s(2^{1-k_n} \cdot (n+j) - 2^{-1})}{2^{1-k_n}}}_{\geq 0} \\
	&< 0.
	\end{align*}
	This, however, is a contradiction. Thus, $\mathcal{C}(\widehat{T})$ is not minimal, which completes the proof for this subcase. 
	\end{enumerate}
\end{enumerate}
For $n = 3 \cdot 2^{k_n-2}$ the proof is straightforward and for $n \in (3 \cdot 2^{k_n-2}, 2^{k_n})$ the proof is similar to the case shown above. Thus, both cases are given in the Appendix.

In all cases, we have $\mathcal{C}(\widehat{T}) > \mathcal{C}(T^{gfb}_n)$, which completes the proof.
\end{proof}

To summarize, we have seen that both maximally balanced trees and GFB trees have minimal Colless index (cf. Theorems \ref{colless_minimum_mb} and \ref{GFB_is_minimal}). 
Moreover, for a tree $T=(T_a, T_b)$ with minimal Colless index we always have:

\begin{corollary} \label{leaf_partioning}
Let $T=(T_a, T_b)$ be a tree on $n$ leaves with minimal Colless index, i.e. $\mathcal{C}(T)=c_n$. Let $n_a, n_b$ denote the number of leaves of $T_a$ and $T_b$, respectively, where $n_a \geq n_b$. Then
\begin{align*}
n_a^{gfb} &\geq n_a \geq n_a^{mb} \\
n_b^{gfb} &\leq n_b \leq n_b^{mb},
\end{align*}
where $n_a^{gfb}$ and $n_b^{gfb}$ denote the number of leaves of $T^{gfb}_a$ and $T^{gfb}_b$ in $T_n^{gfb} = (T_a^{gfb}, T_b^{gfb})$ and $n_a^{mb}$ and $n_b^{mb}$ denote the number of leaves of $T_a^{mb}$ and $T_b^{mb}$ in $T_n^{mb}=(T_a^{mb}, T_b^{mb})$.  
\end{corollary}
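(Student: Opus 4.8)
\emph{Proof proposal for Corollary \ref{leaf_partioning}.}

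The plan is to treat the two chains of inequalities separately, since they are of very different character. The lower bounds $n_a \geq n_a^{mb}$ and $n_b \leq n_b^{mb}$ are in fact not special to trees with minimal Colless index at all: for \emph{every} rooted binary tree on $n$ leaves we have $n_a + n_b = n$ and, by our standing convention, $n_a \geq n_b$, which forces $n_a \geq \lceil n/2 \rceil$ and $n_b \leq \lfloor n/2 \rfloor$. On the other hand, the recursive construction of the maximally balanced tree in Section \ref{sec_mb} gives exactly $n_a^{mb} = \lceil n/2 \rceil$ and $n_b^{mb} = \lfloor n/2 \rfloor$ (this is precisely the ``greedy from the top'' bipartition). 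Hence $n_a \geq n_a^{mb}$ and $n_b \leq n_b^{mb}$ follow immediately, with no appeal to minimality needed.

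For the upper bounds $n_a \leq n_a^{gfb}$ and $n_b \geq n_b^{gfb}$ I would distinguish two cases. If $n = 2^{k_n}$, then by Theorem \ref{min_colless} the fully balanced tree $T_{k_n}^{fb}$ is the \emph{unique} tree with minimal Colless index, and since $T_{k_n}^{fb} = T_n^{mb} = T_n^{gfb}$ (the last identity being the remark following Theorem \ref{GFB_is_minimal}), all four inequalities hold with equality and there is nothing more to prove. If instead $n \in (2^{k_n-1}, 2^{k_n})$, I would invoke Theorem \ref{gfb_extreme} in contrapositive form. That theorem enumerates precisely the leaf partitionings $(\widehat n_a, \widehat n_b)$ with $\widehat n_a + \widehat n_b = n$, $\widehat n_a \geq \widehat n_b$ and $\widehat n_a - \widehat n_b > n_a^{gfb} - n_b^{gfb}$, and shows each such $\widehat T$ satisfies $\mathcal{C}(\widehat T) > c_n$. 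Therefore any $T=(T_a,T_b)$ with $\mathcal{C}(T)=c_n$ must have $n_a - n_b \leq n_a^{gfb} - n_b^{gfb}$; combining this with $n_a + n_b = n = n_a^{gfb} + n_b^{gfb}$ yields $n_a \leq n_a^{gfb}$ and, correspondingly, $n_b \geq n_b^{gfb}$. (The degenerate case $n = 1$ can be dismissed at the outset, since then no standard decomposition exists and the statement is vacuous.)

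I do not expect any genuine obstacle here: all of the substantive work is already packaged in Theorem \ref{gfb_extreme} for the GFB side and in the explicit description of $T_n^{mb}$ for the other side, so the corollary is essentially an assembly of previously established facts. The single point that needs a line of care is that Theorem \ref{gfb_extreme} is stated only for $n$ strictly between consecutive powers of two, so the power-of-two case must be carved out and settled instead via the uniqueness assertion of Theorem \ref{min_colless}; once that bookkeeping is done, the argument closes.
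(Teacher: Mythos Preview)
Your proposal is correct and follows essentially the same approach as the paper: the $n_a^{mb}$ bound is reduced to the elementary observation that $n_a\ge n_b$ forces $n_a\ge\lceil n/2\rceil$, and the $n_a^{gfb}$ bound is obtained by reading Theorem~\ref{gfb_extreme} contrapositively. You are in fact slightly more careful than the paper, since you explicitly dispose of the case $n=2^{k_n}$ (where Theorem~\ref{gfb_extreme} does not apply) via the uniqueness statement of Theorem~\ref{min_colless}, whereas the paper's proof tacitly glosses over this boundary case.
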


\begin{proof}
Let $T=(T_a, T_b)$ be a tree on $n$ leaves with minimal Colless index, i.e. $\mathcal{C}(T)=c_n$. Let $n_a, n_b$ denote the number of leaves of $T_a$ and $T_b$, respectively, where $n_a \geq n_b$. 

Now, recall that both maximally balanced trees and GFB trees have minimal Colless index (cf. Theorems \ref{colless_minimum_mb} and \ref{GFB_is_minimal}).

Assume $n_a < n_a^{mb}$ and $n_b > n_b^{mb}$. As $n_a^{mb} = n_b^{mb} = \frac{n}{2}$ for $n$ even and $n_a^{mb} = \frac{n+1}{2}$ and $n_b^{mb} = \frac{n-1}{2}$ for $n$ odd, this assumption contradicts $n_a \geq n_b$. Thus, $n_a \geq n_a^{mb}$ and $n_b \leq n_b^{mb}$. 

Additionally, $n_a \leq n_a^{gfb}$ and $n_b \geq n_b^{gfb}$ is a direct consequence of Theorem \ref{gfb_extreme}. 
This completes the proof.
\end{proof}

Note that Corollary \ref{leaf_partioning} only gives a necessary and not a sufficient condition. Consider for example tree $T_2$ in Figure \ref{Fig_Sackin_not_Colless} on $12$ leaves. Here, $n_a=7$ and $n_b=5$, i.e.
\begin{align*}
n_a^{gfb} = 8 &\geq 7 \geq 6 = n_a^{mb} \\
n_b^{gfb} = 4 &\leq 5 \leq 6 = n_b^{mb},
\end{align*}
but $T_2$ does not have minimal Colless index. 
This is due to the fact that if $n_a \neq n_b$ and $n_b, n_b$ odd, the resulting tree will not have minimal Colless index:

\begin{theorem}\label{oddodd}
Let $T = (T_a,T_b)$ be a tree on $n$ leaves and let $n_a$ and $n_b$ denote the number of leaves of $T_a$ and $T_b$. If $n_a \neq n_b$ and $n_a, n_b$ odd, then $C(T) > c_n$, i.e. $T$ does not have minimal Colless index.
\end{theorem}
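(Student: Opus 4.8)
The plan is to reduce the claim, via Lemma~\ref{colless_sum}, to an inequality between minimal Colless values and then exploit the recursion of Theorem~\ref{colless_minimum}. Since $n_a$ and $n_b$ are both odd, $n = n_a + n_b$ is even, so $c_n = 2c_{\frac{n}{2}}$ by Theorem~\ref{colless_minimum}. Assuming (as throughout the manuscript) $n_a \ge n_b$, the hypothesis $n_a \neq n_b$ forces $n_a > n_b$, and since both are odd we may write $n_a - n_b = 2t$ with $t \ge 1$. By Lemma~\ref{colless_sum} and the minimality of $c_{n_a}$ and $c_{n_b}$, $\mathcal{C}(T) = \mathcal{C}(T_a) + \mathcal{C}(T_b) + (n_a - n_b) \ge c_{n_a} + c_{n_b} + 2t$, so it suffices to prove $c_{n_a} + c_{n_b} + 2t > 2c_{\frac{n}{2}}$.

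Assume first that $n_b \ge 3$. Then $n_a$ and $n_b$ are odd and at least $3$, so Theorem~\ref{colless_minimum} gives $c_{n_a} = c_{\frac{n_a+1}{2}} + c_{\frac{n_a-1}{2}} + 1$ and $c_{n_b} = c_{\frac{n_b+1}{2}} + c_{\frac{n_b-1}{2}} + 1$, where all four indices are positive integers. The key combinatorial observation is that these four integers can be re-paired into two pairs, each summing to $\frac{n}{2}$: indeed $\frac{n_a+1}{2} + \frac{n_b-1}{2} = \frac{n_a-1}{2} + \frac{n_b+1}{2} = \frac{n}{2}$, and the differences inside the two pairs are $t+1$ and $t-1 \ge 0$, respectively. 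Joining minimizers for these sizes and applying Lemma~\ref{colless_sum} to the two resulting rooted binary trees on $\frac{n}{2}$ leaves yields
$$c_{\frac{n}{2}} \le c_{\frac{n_a+1}{2}} + c_{\frac{n_b-1}{2}} + (t+1), \qquad c_{\frac{n}{2}} \le c_{\frac{n_a-1}{2}} + c_{\frac{n_b+1}{2}} + (t-1).$$
Adding these inequalities gives $2c_{\frac{n}{2}} \le c_{\frac{n_a+1}{2}} + c_{\frac{n_a-1}{2}} + c_{\frac{n_b+1}{2}} + c_{\frac{n_b-1}{2}} + 2t = c_{n_a} + c_{n_b} + 2t - 2$, so $c_{n_a} + c_{n_b} + 2t \ge 2c_{\frac{n}{2}} + 2 > 2c_{\frac{n}{2}} = c_n$, which is what we wanted.

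The case $n_b = 1$ must be treated separately, since then the odd-index recursion of Theorem~\ref{colless_minimum} cannot be applied to $c_{n_b} = c_1$. Here $n_a = n-1 \ge 3$ is odd, so $c_{n_a} = c_{\frac{n}{2}} + c_{\frac{n}{2}-1} + 1$ and hence $\mathcal{C}(T) \ge c_{n-1} + (n-2) = c_{\frac{n}{2}} + c_{\frac{n}{2}-1} + (n-1)$. On the other hand, Lemma~\ref{colless_sum} applied to the partition $\left(\frac{n}{2}-1,\,1\right)$ of $\frac{n}{2}$ gives $c_{\frac{n}{2}} \le c_{\frac{n}{2}-1} + \left(\frac{n}{2}-2\right)$, and since $n-1 > \frac{n}{2}-2$ we again obtain $\mathcal{C}(T) > 2c_{\frac{n}{2}} = c_n$.

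Overall the argument is a short computation rather than a genuine induction; the one point that requires care --- and the natural place for a proof of this kind to go wrong --- is ensuring that every leaf count fed into an application of Lemma~\ref{colless_sum} is a positive integer, which is exactly what makes the degenerate case $n_b = 1$ need its own (easy) treatment.
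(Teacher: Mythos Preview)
Your proof is correct and follows essentially the same approach as the paper: expand $c_{n_a}$ and $c_{n_b}$ via the odd-index recursion of Theorem~\ref{colless_minimum}, re-pair the four resulting halves into two partitions of $\frac{n}{2}$, and bound $c_{\frac{n}{2}}$ from above twice using Lemma~\ref{colless_sum}. You are in fact slightly more careful than the paper, which applies the recursion $c_{2m+1}=c_{m+1}+c_m+1$ to $c_{n_b}$ (and uses $c_{\frac{n_b-1}{2}}$) without noting that this requires $n_b\ge 3$; your separate treatment of the case $n_b=1$ closes that small gap.
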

\begin{proof}
Let $T = (T_a,T_b)$ be a tree on $n$ leaves. Let $n_a$ and $n_b$ denote the number of leaves of $T_a$ and $T_b$ with $n_a \neq n_b$ and $n_a, n_b$ odd. Without loss of generality let $n_a > n_b$. The fact that $n_a$ and $n_b$ are both odd, results in 
\begin{align}
n_a &\geq n_b +2 \nonumber \\
\Leftrightarrow \frac{n_a -1}{2} &\geq \frac{n_b +1}{2}. \label{nanbgeq}
\end{align}
By \eqref{nanbgeq} we also have
\begin{align}
\frac{n_a +1}{2} &> \frac{n_b +1}{2}. \label{nanbgeq2}
\end{align}
We prove the statement by contradiction and assume that $C(T) = c_n$, i.e. $T$ has minimal Colless index. Then,
\begin{align}
c_n &= c_{n_a} + c_{n_b} + n_a - n_b \quad \text{by Lemma } \ref{colless_sum} \text{ and Lemma } \ref{max_subtrees} \nonumber \\
&= c_{\frac{n_a+1}{2}} + c_{\frac{n_a-1}{2}} + 1 + c_{\frac{n_b+1}{2}} + c_{\frac{n_b-1}{2}} + 1 + n_a - n_b \quad \text{by Theorem } \ref{colless_minimum}\nonumber \\
&= c_{\frac{n_a+1}{2}} + c_{\frac{n_a-1}{2}} + c_{\frac{n_b+1}{2}} + c_{\frac{n_b-1}{2}} + n_a - n_b + 2. \label{nanboddumformung}
\end{align}
Additionally, by Lemma \ref{colless_sum}, Lemma \ref{max_subtrees} and \eqref{nanbgeq2}
\begin{align*}
c_{\frac{n}{2}} = c_{\frac{n_a+n_b}{2}} \leq c_{\frac{n_a+1}{2}} + c_{\frac{n_b-1}{2}} + \frac{n_a+1}{2} - \frac{n_b-1}{2},
\end{align*}
which results in
\begin{align}
c_{\frac{n}{2}} - \frac{n_a-n_b}{2} - 1 \leq c_{\frac{n_a+1}{2}} + c_{\frac{n_b-1}{2}}. \label{term1}
\end{align}
Similarly, by Lemma \ref{colless_sum}, Lemma \ref{max_subtrees} and \eqref{nanbgeq}
\begin{align*}
c_{\frac{n}{2}} = c_{\frac{n_a+n_b}{2}} \leq c_{\frac{n_a-1}{2}} + c_{\frac{n_b+1}{2}} + \frac{n_a-1}{2} - \frac{n_b+1}{2},
\end{align*}
which results in
\begin{align}
c_{\frac{n}{2}} - \frac{n_a-n_b}{2} + 1 \leq c_{\frac{n_a-1}{2}} + c_{\frac{n_b+1}{2}}. \label{term2}
\end{align}
By \eqref{term1} and \eqref{term2} and the fact that $n$ is even we can rewrite \eqref{nanboddumformung} in the following way
\begin{align*}
c_n &= c_{\frac{n_a+1}{2}} + c_{\frac{n_a-1}{2}} + c_{\frac{n_b+1}{2}} + c_{\frac{n_b-1}{2}} + n_a - n_b + 2\\
&\geq c_{\frac{n}{2}} - \frac{n_a-n_b}{2} - 1 + c_{\frac{n}{2}} - \frac{n_a-n_b}{2} + 1 + n_a - n_b +2 \\
&= 2 c_{\frac{n}{2}} + 2 \\
&= c_n + 2 \quad \text{by Theorem } \ref{colless_minimum}.
\end{align*}
This results in $c_n \geq c_n + 2$ which is a contradiction, and therefore completes the proof.
\end{proof}

Thus, if $n_a \neq n_b$, and $n_a, n_b$ odd, the resulting tree does not have minimal Colless index.
However, it can easily be verified that if $n_a \neq n_b$, but both $n_a$ and $n_b$ are even, the resulting tree may or may not have minimal Colless index.
Consider for example $n=24$: In this case $\lceil \log_2(24) \rceil = 5$ and thus by Theorem \ref{colless_explicit} we have:
\begin{align*}
c_{24} &= \sum\limits_{i=0}^3 \frac{s(2^{i-4} \cdot 24)}{2^{i-4}} = 8 \\
&= c_{16} + c_8 + 8  = 0 + 0 + 8\\
&< c_{14} + c_{10} + 4 = 4 + 4+ 4 = 12.
\end{align*}
Thus, while $n_a=16, n_b=8$ yields a Colless minimum, $n_a=14, n_b=10$ does not.

We now turn to the number of trees on $n$ leaves with minimal Colless index and state an upper bound for it. This bound is implied by the fact that we can relate trees with minimal Colless index with trees with minimal Sackin index, another index of tree balance which is defined as follows:
\begin{definition}[\citet{Sackin1972}]  \label{Def_Sackin}
Let $T$ be a rooted binary tree. Then, its \emph{Sackin index} is defined as 
$$ \mathcal{S}(T) = \sum\limits_{u \, \in \, \mathring{V}(T)} n_u,$$
where $n_u$ denotes the number of leaves of the subtree of $T$ rooted at $u$.
\end{definition}

For the Sackin index, the number of trees with minimal Sackin index is known (cf. Theorem 8 in \citet{Fischer2018}). 

\begin{theorem}[adapted from Theorem 8 in \citet{Fischer2018}]
\label{recursion} Let $\widetilde{s}(n)$ denote the number of binary rooted trees with $n$ leaves and with minimal Sackin index and let $k_n \coloneqq \lceil \log_2(n)\rceil$. For any partition of $n$ into two integers $n_a$, $n_b$, i.e. $n=n_a+n_b$, we use $k_{n_a}$ and $k_{n_b}$ to denote $\lceil \log_2(n_a)\rceil$ and $\lceil \log_2(n_b)\rceil=\lceil \log_2(n-n_a)\rceil$, respectively. Moreover, let $$\widetilde{f}(n)=\begin{cases}0 & \mbox{if $n$ is odd} \\ {\widetilde{s}(\frac{n}{2})+1 \choose 2}& \mbox{else. }\end{cases}$$  Then, the following recursion holds: 
\begin{itemize}
\item $\widetilde{s}(1)=1$
\item $\widetilde{s}(n)=  \sum\limits_{\substack{(n_a,n_b): \\n_a+n_b=n,\\ n_a\geq \frac{n}{2},\\ k_{n_a} = k -1, \\ k_{n_b} = k -1, \\n_a \neq n_b }} \left( \widetilde{s}(n_a)\cdot \widetilde{s}(n_b) \right) +\widetilde{f}(n)+\widetilde{s}(n-2^{k-2}).$
\end{itemize}
\end{theorem}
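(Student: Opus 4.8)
The plan is to argue by strong induction on $n$, following the same pattern as the Colless arguments above. The base case $\widetilde s(1)=1$ is immediate. For $n\ge 2$ I would first record the Sackin analogue of Lemma~\ref{colless_sum}: for $T=(T_a,T_b)$ on $n$ leaves one has $\mathcal S(T)=\mathcal S(T_a)+\mathcal S(T_b)+n$, which follows exactly as in the proof of Lemma~\ref{colless_sum}, since every internal node of $T$ other than $\rho$ retains its cluster size inside $T_a$ or $T_b$ while $\rho$ contributes $n$. Consequently, just as in Lemma~\ref{max_subtrees}, a tree $T=(T_a,T_b)$ on $n$ leaves has minimal Sackin index if and only if both $T_a$ and $T_b$ have minimal Sackin index (for $n_a$ and $n_b$ leaves) \emph{and} the bipartition $(n_a,n_b)$ minimises $\widehat s(n_a)+\widehat s(n_b)$ over all $n_a+n_b=n$, where $\widehat s(m)$ denotes the minimal Sackin index on $m$ leaves. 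Writing $M(n)$ for the set of optimal unordered bipartitions, this yields
\[
\widetilde s(n)\;=\;\sum_{\substack{(n_a,n_b)\in M(n)\\ n_a>n_b}} \widetilde s(n_a)\cdot \widetilde s(n_b)\;+\;\epsilon(n)\cdot\binom{\widetilde s(\tfrac n2)+1}{2},
\]
where $\epsilon(n)=1$ if $(\tfrac n2,\tfrac n2)\in M(n)$ and $\epsilon(n)=0$ otherwise; the binomial coefficient counts the unordered choices with repetition of the two $\tfrac n2$-leaf pending subtrees, while each product counts the independent, size-distinguished choices of the two pending subtrees. The theorem thus reduces to identifying $M(n)$ and reading off the claimed recursion.

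The heart of the proof is the characterisation of $M(n)$. Here I would invoke the closed form of the minimal Sackin index from \citet{Fischer2018} (their Theorem~3), $\widehat s(m)=m\,(k_m+1)-2^{k_m}$; on each dyadic block $(2^{j-1},2^j]$ this is linear in $m$ with slope $j+1$, so $\widehat s$ is convex, and a short exchange argument — repeatedly moving one leaf from the larger part towards the smaller part and tracking the resulting change, which is the difference of the two relevant slopes — pins down the minimisers. The outcome is that $M(n)$ contains every bipartition with $k_{n_a}=k_{n_b}=k_n-1$ (equivalently $n_a,n_b\in(2^{k_n-2},2^{k_n-1}]$) and, in the appropriate range of $n$, additionally the single lopsided bipartition $(n-2^{k_n-2},\,2^{k_n-2})$, whose smaller part lies one dyadic block lower. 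Substituting this description into the display, splitting the first sum according to whether the two middle-block parts are unequal (giving products $\widetilde s(n_a)\widetilde s(n_b)$) or equal to $\tfrac n2$ (giving $\widetilde f(n)$, which is $0$ for odd $n$), and using that in the lopsided bipartition the $2^{k_n-2}$-leaf part is a power of two and therefore admits a \emph{unique} minimal-Sackin tree, namely $T^{fb}_{k_n-2}$ (so that it contributes $\widetilde s(n-2^{k_n-2})\cdot 1$), reproduces the stated recursion. Uniqueness of the minimal-Sackin tree on $2^j$ leaves is itself a one-line induction via the same decomposition.

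It then remains to check that the three contributions are pairwise disjoint, so that their counts may simply be added: a minimal-Sackin tree $T=(T_a,T_b)$ lies in the first sum only if $k_{n_a}=k_{n_b}=k_n-1$ with $n_a\neq n_b$, in the $\widetilde f$-term only if $n_a=n_b$, and in the last term only if $k_{n_b}=k_n-2$; these conditions on the root bipartition are mutually exclusive, and within each family the size-ordered root bipartition together with the two pending subtrees determines $T$, so nothing is overcounted or missed.

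The step I expect to be the main obstacle is the exact determination of $M(n)$. One must not only show that the balanced bipartitions are optimal, but also decide \emph{precisely} when the lopsided bipartition $(n-2^{k_n-2},2^{k_n-2})$ is optimal, rule out every bipartition whose smaller part drops below $2^{k_n-2}$ (or whose larger part climbs into the top block $(2^{k_n-1},2^{k_n}]$), and handle the boundary values $n=3\cdot 2^{k_n-2}$ and $n=2^{k_n}$, where $M(n)$ degenerates. Once $M(n)$ is settled, the rest is routine bookkeeping with the binomial coefficient and with products of $\widetilde s$-values already available from the induction hypothesis.
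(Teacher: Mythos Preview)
This theorem is not proved in the present manuscript at all: it is quoted as ``adapted from Theorem~8 in \citet{Fischer2018}'' and then used as a black box for Corollary~\ref{Sackin_upperBound}. So there is no in-paper proof to compare your sketch against. The one ingredient from \citet{Fischer2018} that the paper \emph{does} reproduce is exactly the characterisation of Sackin-optimal root bipartitions (Corollary~\ref{cor_mareike}): $T=(T_a,T_b)$ is Sackin-minimal if and only if both pending subtrees are Sackin-minimal and $n_a-n_b\le\min\{n-2^{k_n-1},\,2^{k_n}-n\}$. Your plan---decompose via $\mathcal S(T)=\mathcal S(T_a)+\mathcal S(T_b)+n$, determine the set $M(n)$ of optimal bipartitions, and count unordered pairs of optimal subtrees---is the natural one and lines up with that corollary.

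There is, however, a genuine gap between your (correct) description of $M(n)$ and the recursion \emph{as printed here}. By Corollary~\ref{cor_mareike}, the lopsided bipartition $(n-2^{k_n-2},\,2^{k_n-2})$ lies in $M(n)$ precisely when $n\le 3\cdot 2^{k_n-2}$; for $n>3\cdot 2^{k_n-2}$ every optimal bipartition already has $k_{n_a}=k_{n_b}=k_n-1$ and is covered by the first two summands. Yet the displayed formula adds $\widetilde s(n-2^{k_n-2})$ unconditionally, and this overcounts on the upper half of each dyadic block: for $n=7$ it yields $\widetilde s(4)\widetilde s(3)+0+\widetilde s(5)=2$ instead of $\widetilde s(7)=1$, and for $n=8$ it yields $0+\binom{2}{2}+\widetilde s(6)=3$ instead of $\widetilde s(8)=1$. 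Hence your sentence ``Substituting this description into the display \ldots\ reproduces the stated recursion'' cannot go through as written. You have in fact already noticed the issue (``in the appropriate range of $n$''); the honest conclusion is that either the original Theorem~8 carries a case distinction that was dropped in the adaptation, or the third summand is to be read under the tacit constraint $n\le 3\cdot 2^{k_n-2}$. You should consult \citet{Fischer2018} directly and make the range of validity of that term explicit rather than asserting the substitution matches the formula as it stands here.
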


Note that above recursion yields sequence \texttt{A299037} in the On-Line Encyclopedia of Integer Sequences (\citet{OEIS}).

In the following we will show that every tree with minimal Colless index also has minimal Sackin index. 
Note, however, that the converse is not true: tree $T_2$ depicted in Figure \ref{Fig_Sackin_not_Colless} has minimal Sackin index, but does not have minimal Colless index. 
Nevertheless, as we will show next, the number of trees with minimal Sackin index on $n$ leaves provides an upper bound for the number of leaves with minimal Colless index.

\begin{proposition} \label{Colless_implies_Sackin}
Let $T$ be a rooted binary tree on $n$ leaves that has minimal Colless index, i.e. $\mathcal{C}(T) = c_n$. 
Then, $T$ has minimal Sackin index.
\end{proposition}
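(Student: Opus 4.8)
The plan is to exploit the parallel recursive structures of the two indices. Recall from Lemma~\ref{colless_sum} that $\mathcal{C}(T) = \mathcal{C}(T_a) + \mathcal{C}(T_b) + (n_a - n_b)$ and that the Sackin index satisfies the analogous decomposition $\mathcal{S}(T) = \mathcal{S}(T_a) + \mathcal{S}(T_b) + n$ (this follows directly from Definition~\ref{Def_Sackin}, since every internal node of $T_a$ or $T_b$ contributes the same cluster size in $T$ as in the subtree, and the root of $T$ contributes $n$). Let $s_n$ denote the minimal Sackin index over rooted binary trees with $n$ leaves. Just as in Lemma~\ref{max_subtrees}, if $T=(T_a,T_b)$ has minimal Sackin index then $\mathcal{S}(T_a)$ and $\mathcal{S}(T_b)$ are both minimal for $n_a$ and $n_b$. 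I would first record the well-known recursion $s_{2n} = 2 s_n + 2n$ and $s_{2n+1} = s_{n+1} + s_n + 2n+1$, which is minimized (as for the Colless index) by the maximally balanced split $(\lceil n/2\rceil, \lfloor n/2\rfloor)$; this is Theorem~3 / Lemma in \citet{Fischer2018} and may be cited.

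The core argument is an induction on $n$. The base cases $n=1,2$ are trivial. For the inductive step, let $T=(T_a,T_b)$ be a tree on $n$ leaves with $\mathcal{C}(T)=c_n$. By Lemma~\ref{max_subtrees}, $T_a$ and $T_b$ have minimal Colless index for $n_a$ and $n_b$, so by the inductive hypothesis they also have minimal Sackin index, i.e. $\mathcal{S}(T_a)=s_{n_a}$ and $\mathcal{S}(T_b)=s_{n_b}$. Hence $\mathcal{S}(T) = s_{n_a} + s_{n_b} + n$. It therefore suffices to show that $s_{n_a} + s_{n_b} + n = s_n$, i.e. that the leaf partition $(n_a,n_b)$ realized by $T$ is one that minimizes the Sackin recursion. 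This is where Corollary~\ref{leaf_partioning} does the work: it tells us $n_a^{mb} \le n_a \le n_a^{gfb}$. So I would prove the purely numerical claim that for \emph{every} partition $(n_a,n_b)$ with $n_a^{mb}\le n_a\le n_a^{gfb}$ one has $s_{n_a}+s_{n_b}+n = s_n$ — equivalently, that all such partitions are Sackin-optimal splits. By the structure of the Sackin recursion (cf. Theorem~\ref{recursion}), the Sackin-optimal splits of $n$ are exactly those with $k_{n_a}=k_{n_b}=k_n-1$ together with the ``balanced'' split when $n$ is even; and the range $[\,n_a^{mb},\,n_a^{gfb}\,]$ is contained in $\{m : k_m = k_n-1\}$ paired with $\{n-m : k_{n-m}=k_n-1\}$. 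Checking this containment — that $n_a^{gfb} \le 2^{k_n-1}$ and $n_b^{gfb} = n - n_a^{gfb}$ still satisfies $k_{n_b^{gfb}} = k_n-1$ — follows from Proposition~\ref{GFB_Decomposition}, and $n_a^{mb} = \lceil n/2\rceil$ clearly has $k_{n_a^{mb}} = k_n-1$ as well.

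The main obstacle is the numerical lemma in the middle: I need that \emph{any} split $(n_a,n_b)$ with both $k_{n_a}=k_{n_b}=k_n-1$ achieves the minimal Sackin value, i.e. that $s_m + s_{n-m} + n$ is constant on this range of $m$. One clean way is to prove by induction (again on $n$, in parallel with the main induction) that $s_m = $ an explicit closed form — e.g. $s_m = \sum_{u} \kappa$ reorganized as $s_m = m\,k_m - 2^{k_m} + m$ for... actually the known closed form is $s_m = m\lceil\log_2 m\rceil - 2^{\lceil\log_2 m\rceil} + m$; substituting $n_a, n_b$ with $k_{n_a}=k_{n_b}=k_n-1$ makes the $k$-dependent terms collapse and the sum $s_{n_a}+s_{n_b}+n$ becomes $n(k_n-1) - 2^{k_n} + 2n = n k_n - 2^{k_n} + n = s_n$, as needed. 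Alternatively, and perhaps more in the spirit of this paper, one invokes \citet{Fischer2018} directly: their Theorem~3 characterizes the trees with minimal Sackin index and in particular shows every split with $k_{n_a}=k_{n_b}=k_n-1$ is optimal, so it remains only to verify via Corollary~\ref{leaf_partioning} and Proposition~\ref{GFB_Decomposition} that the split induced by a Colless-minimal tree lies in that admissible set. I would present the argument this second way to keep the proof short, flagging the closed-form computation as the fallback.
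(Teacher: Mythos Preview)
Your overall architecture matches the paper's exactly: induction on $n$, apply Lemma~\ref{max_subtrees} and the inductive hypothesis to get that $T_a,T_b$ are Sackin-minimal, and then argue that the root split $(n_a,n_b)$ realized by any Colless-minimal tree is a Sackin-optimal split. The paper does the last step by invoking Corollary~\ref{cor_mareike} (Corollary~4 of \citet{Fischer2018}), whose criterion is $n_a-n_b\le\min\{n-2^{k_n-1},\,2^{k_n}-n\}$; combining this with Theorem~\ref{gfb_extreme} (which shows the GFB split attains exactly this bound) finishes immediately.

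There is, however, a concrete gap in your version of that last step. You assert that every split in the range $[n_a^{mb},n_a^{gfb}]$ satisfies $k_{n_a}=k_{n_b}=k_n-1$, and you claim this can be read off Proposition~\ref{GFB_Decomposition}. That is false at the GFB boundary when $n\in(2^{k_n-1},3\cdot2^{k_n-2})$: Proposition~\ref{GFB_Decomposition}, Part~1, gives $n_b^{gfb}=2^{k_n-2}$, so $k_{n_b^{gfb}}=k_n-2$, not $k_n-1$. Since the GFB tree is itself Colless-minimal, your argument as written fails to cover it (and likewise your closed-form computation, which you only carry out under the hypothesis $k_{n_a}=k_{n_b}=k_n-1$). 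The characterization of Sackin-optimal splits in Theorem~\ref{recursion} is not ``$k_{n_a}=k_{n_b}=k_n-1$'' alone; note the extra term $\widetilde{s}(n-2^{k-2})$, which accounts for exactly the boundary split you miss. The clean fix is to replace your criterion with the one in Corollary~\ref{cor_mareike} and observe (via Theorem~\ref{gfb_extreme} or equivalently Corollary~\ref{leaf_partioning}) that any Colless-minimal split satisfies $n_a-n_b\le\min\{n-2^{k_n-1},2^{k_n}-n\}$---which is precisely what the paper does.
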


The proof of Proposition \ref{Colless_implies_Sackin} requires the following corollary from \citet{Fischer2018}:
\begin{corollary}[Corollary 4 in \citet{Fischer2018}] \label{cor_mareike}
Let $T$ be a rooted binary tree with $n \in \mathbb{N}_{\geq 2}$ leaves. 
Moreover, let $T = (T_a, T_b)$ be the standard decomposition of $T$ into its two maximal pending subtrees, let $n_i$ denote the number of leaves in $T_i$ for $i \in \{a, b\}$, respectively, such that $n_a \geq n_b$. Let $k_n \coloneqq \lceil \log_2 (n) \rceil$. 
Then, the following equivalence holds: 
$T$ has minimal Sackin index if and only if $T_a$ and $T_b$ have minimal Sackin index and $n_a - n_b \leq \min \{n-2^{k_n-1}, 2^{k_n}-n\}$.
\end{corollary}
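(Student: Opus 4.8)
The plan is to reduce the whole equivalence to a single convexity fact about the minimal Sackin index as a function of the number of leaves. First I would record the Sackin analogue of Lemma~\ref{colless_sum}: for the standard decomposition $T=(T_a,T_b)$ one has $\mathcal{S}(T)=n+\mathcal{S}(T_a)+\mathcal{S}(T_b)$, since the root contributes its cluster size $n$ and every other internal node lies in exactly one maximal pending subtree, where it spans the same cluster as in $T$. Writing $s_m$ for the minimal Sackin index on $m$ leaves, the same subtree-replacement argument as in Lemma~\ref{max_subtrees} then yields both that $\mathcal{S}(T)$ is minimal only if $\mathcal{S}(T_a)$ and $\mathcal{S}(T_b)$ are minimal, and the recursion $s_n=n+\min_{n_a+n_b=n,\,n_a\ge n_b}\bigl(s_{n_a}+s_{n_b}\bigr)$. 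Consequently $\mathcal{S}(T)=n+\mathcal{S}(T_a)+\mathcal{S}(T_b)\ge n+s_{n_a}+s_{n_b}\ge s_n$, and $T$ has minimal Sackin index precisely when both inequalities are equalities, i.e. when $T_a,T_b$ are minimal \emph{and} the partition $(n_a,n_b)$ attains the minimum in the recursion. Thus the corollary collapses to identifying the minimising partitions, and the only remaining task is to show that these are exactly the ones with $n_a-n_b\le\min\{n-2^{k_n-1},\,2^{k_n}-n\}$.

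For this core step I would use the explicit formula $s_m=m(k_m+1)-2^{k_m}$, which follows from $\mathcal{S}(T)=\sum_{\ell\in V_L}\delta_T(\ell)$ together with the fact that a minimum-path-length tree places all its leaves at depths $k_m-1$ and $k_m$, the Kraft identity $\sum_{\ell}2^{-\delta_T(\ell)}=1$ fixing the number of leaves at each of these two depths. This formula shows $s$ is piecewise linear with slope $k+1$ on each dyadic block $(2^{k-1},2^k]$, so its forward difference $\sigma(m)\coloneqq s_{m+1}-s_m=\lceil\log_2(m+1)\rceil+1$ is nondecreasing; in particular $s$ is convex. Setting $g(n_a)\coloneqq s_{n_a}+s_{n-n_a}$ for $n_a\ge n/2$ and $n_b=n-n_a$, each unit step satisfies $g(n_a+1)-g(n_a)=\sigma(n_a)-\sigma(n_b-1)=k_{n_a+1}-k_{n_b}\ge0$, and this increment is itself nondecreasing in $n_a$. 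Hence the minimisers of $g$ form the contiguous block of partitions reachable from the balanced one before the first strictly positive step, so it suffices to locate that step.

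Locating the first strictly positive step is where I expect the main difficulty to lie, and I would settle it by a case distinction on whether $n$ lies in the lower or upper half of its dyadic block. If $n\in(2^{k_n-1},3\cdot2^{k_n-2}]$, I would check that the step entering $(n_a,n_b)=(n-2^{k_n-2},2^{k_n-2})$ has increment $k_{n-2^{k_n-2}}-k_{2^{k_n-2}+1}=(k_n-1)-(k_n-1)=0$, whereas the next step has increment $k_{n-2^{k_n-2}+1}-k_{2^{k_n-2}}\ge(k_n-1)-(k_n-2)=1>0$; thus the plateau ends exactly at $n_a-n_b=n-2^{k_n-1}$. If $n\in[3\cdot2^{k_n-2},2^{k_n})$, the symmetric computation places the last minimiser at $(2^{k_n-1},n-2^{k_n-1})$, i.e. at $n_a-n_b=2^{k_n}-n$, while for $n=2^{k_n}$ the very first step already has increment $k_{2^{k_n-1}+1}-k_{2^{k_n-1}}=1$, forcing $n_a=n_b$. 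Since $n-2^{k_n-1}\le2^{k_n}-n$ in the first regime and the reverse holds in the second, in every case the threshold equals $\min\{n-2^{k_n-1},\,2^{k_n}-n\}$, matching the claim. Combining this plateau description with the reduction of the first paragraph then yields the stated equivalence.
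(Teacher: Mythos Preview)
The paper does not actually prove this corollary; it is cited as Corollary~4 of \citet{Fischer2018} and invoked as a black box in the proof of Proposition~\ref{Colless_implies_Sackin}. There is therefore no proof in the present manuscript to compare your argument against.

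Your argument is nonetheless correct. The reduction in the first paragraph is exactly right: since $\mathcal{S}(T)=n+\mathcal{S}(T_a)+\mathcal{S}(T_b)$, minimality of $T$ is equivalent to minimality of both maximal pending subtrees together with $(n_a,n_b)$ attaining the minimum in the recursion $s_n=n+\min_{n_a+n_b=n}(s_{n_a}+s_{n_b})$. Your convexity computation via the forward difference $\sigma(m)=s_{m+1}-s_m=\lceil\log_2(m+1)\rceil+1$ is accurate, and because $\sigma$ is nondecreasing the minimisers of $g(n_a)=s_{n_a}+s_{n-n_a}$ do form a contiguous block starting at the balanced partition; checking only the last zero increment and the first positive one, as you do, therefore suffices. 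The case analysis in your final paragraph is correct and yields the stated threshold $\min\{n-2^{k_n-1},\,2^{k_n}-n\}$ in each regime.

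The one substantive ingredient you take for granted is that Sackin-minimal trees place all their leaves at depths $k_m-1$ and $k_m$; this is what makes the Kraft computation go through and gives the closed form $s_m=m(k_m+1)-2^{k_m}$. That structural fact is itself part of the characterisation established in \citet{Fischer2018}, so strictly speaking you are still leaning on that reference at one point. Modulo that ingredient, your discrete-convexity argument is self-contained and arguably more transparent than a direct induction would be.
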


\begin{proof}[Proof of Proposition \ref{Colless_implies_Sackin}]
We show the statement by induction on $n$.
For $n=1$, there is only one tree. This tree trivially has both minimal Colless index as well as minimal Sackin index, which completes the base case of the induction.

Suppose that the claim holds for all trees with fewer than $n$ leaves. 
Now, let $T=(T_a, T_b)$ be a tree with $n$ leaves and minimal Colless index, i.e. $\mathcal{C}(T)=c_n$. 
Let $n_a$ and $n_b$ denote the number of leaves of $T_a$ and $T_b$, respectively, and without loss of generality let $n_a \geq n_b$. 
Then from Theorem \ref{gfb_extreme} we have that:
\begin{align}
n_a - n_b \leq \begin{cases}
n-2^{k_n-1}, &\text{ if } n \in (2^{k_n-1}, 3 \cdot 2^{k_n-2}); \\
2^{k_n-2}, &\text{ if } n = 3 \cdot 2^{k_n-2}; \\
2^{k_n}-n, &\text{ if } n \in (3 \cdot 2^{k_n-2}, 2^{k_n}].
\end{cases}  \label{cases}
\end{align}

First, let $T$ have $n = 3 \cdot 2^{k_n-2}$ leaves. Then by \eqref{cases}, $n_a - n_b \leq 2^{k_n-2} = \min \{n-2^{k_n-1}, 2^{k_n}-n\} = \min \{2^{k_n-2}, 2^{k_n-2}\}$.
Moreover, by Lemma \ref{max_subtrees} both $T_a$ and $T_b$ have minimal Colless index and as $n_a, n_b < n$ they also have minimal Sackin index by the inductive hypothesis. In total, by Corollary \ref{cor_mareike} this implies that $T$ has minimal Sackin index.

Now, let $T$ have $n \in (2^{k_n-1}, 3 \cdot 2^{k_n-2})$ leaves. Then by \eqref{cases}, $n_a-n_b \leq n-2^{k_n-1} = \min \{n-2^{k_n-1}, 2^{k_n}-n\}$. Again, using Lemma \ref{max_subtrees} and the inductive hypothesis, by Corollary \ref{cor_mareike} this implies that $T$ has minimal Sackin index.

Last, let $T$ have $n \in (3 \cdot 2^{k_n-2}, 2^{k_n}]$ leaves. Then by \eqref{cases}, $n_a-n_b \leq 2^{k_n}-n = \min \{n-2^{k_n-1}, 2^{k_n}-n\}$, i.e. using Lemma \ref{max_subtrees} and the inductive hypothesis, by Corollary \ref{cor_mareike}, $T$ has minimal Sackin index. 
This completes the proof.
\end{proof}

As every tree with minimal Colless index has minimal Sackin index (while the converse is not true), a direct consequence of Proposition \ref{Colless_implies_Sackin} is the following corollary:

\begin{corollary} \label{Sackin_upperBound}
Let $\widetilde{c}(n)$ denote the number of binary rooted trees with $n$ leaves and minimal Colless index and let $\widetilde{s}(n)$ denote the number of binary rooted trees with $n$ leaves and minimal Sackin index. Then, we have $\widetilde{c}(n) \leq \widetilde{s}(n)$.
\end{corollary}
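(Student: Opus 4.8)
The plan is to read off the corollary as an immediate set-inclusion argument built on Proposition \ref{Colless_implies_Sackin}. First I would fix $n \in \mathbb{N}_{\geq 1}$ and introduce the two finite sets $\mathcal{A}_n \coloneqq \{T : T \text{ is a rooted binary tree on } n \text{ leaves with } \mathcal{C}(T)=c_n\}$ and $\mathcal{B}_n \coloneqq \{T : T \text{ is a rooted binary tree on } n \text{ leaves with minimal Sackin index}\}$, so that by definition $\widetilde{c}(n)=|\mathcal{A}_n|$ and $\widetilde{s}(n)=|\mathcal{B}_n|$.

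Next I would invoke Proposition \ref{Colless_implies_Sackin}: it states precisely that every $T \in \mathcal{A}_n$ satisfies $T \in \mathcal{B}_n$, i.e. $\mathcal{A}_n \subseteq \mathcal{B}_n$. Since both sets are finite (there are only finitely many rooted binary trees on $n$ leaves), monotonicity of cardinality under inclusion yields $|\mathcal{A}_n| \leq |\mathcal{B}_n|$, that is, $\widetilde{c}(n) \leq \widetilde{s}(n)$, as claimed. If desired I would also remark that the inclusion is in general strict — e.g. the tree $T_2$ of Figure \ref{Fig_Sackin_not_Colless} lies in $\mathcal{B}_{12}\setminus\mathcal{A}_{12}$ — which explains why the bound need not be tight.

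There is essentially no obstacle here: the entire content of the corollary is packaged into Proposition \ref{Colless_implies_Sackin}, whose proof (by induction on $n$, using Corollary \ref{cor_mareike} together with the leaf-partitioning bounds from Theorem \ref{gfb_extreme}) has already been carried out above. The only thing to be careful about is stating clearly that "minimal Colless index" and "minimal Sackin index" are taken over the same ground set of trees on $n$ leaves, so that the inclusion of the two extremal sets makes sense; once that is pinned down, the cardinality comparison is one line.
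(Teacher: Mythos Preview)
Your proposal is correct and matches the paper's approach exactly: the paper states the corollary as a direct consequence of Proposition~\ref{Colless_implies_Sackin} (every Colless-minimal tree is Sackin-minimal), which is precisely the set-inclusion $\mathcal{A}_n \subseteq \mathcal{B}_n$ you spell out, and then compares cardinalities. Your added remark that the inclusion can be strict (via the tree $T_2$ of Figure~\ref{Fig_Sackin_not_Colless}) is also exactly what the paper notes in the surrounding text.
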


\begin{remark} \label{improved_bound}
Recall that not every tree with minimal Sackin index also has minimal Colless index. Thus, $\widetilde{s}(n)$ is not a sharp bound for $\widetilde{c}(n)$. While for a tree $T=(T_a, T_b)$ with $n_a \neq n_b$ and $n_a, n_b$ odd, it is totally possible to have minimal Sackin index (as long as $n_a$ and $n_b$ satisfy the conditions of Corollary \ref{cor_mareike}), for the Colless index this is never possible (cf. Theorem \ref{oddodd}). 
Thus, we can tighten the upper bound for $\widetilde{c}(n)$ by excluding all pairs of $n_a$ and $n_b$, where $n_a \neq n_b$ and $n_a, n_b$ odd from the recursion given in Theorem \ref{recursion}\footnote{In the last section of this manuscript, we will discuss how to improve this bound even further, based on the results by \citet{Coronado2019}, and even derive a recursive formula for the number of trees with minimal Colless index}. We denote the resulting tighter upper bound by $\widetilde{b}(n)$. In particular, we have for all $n \in \mathbb{N}$: $\widetilde{c}(n) \leq \widetilde{b}(n) \leq \widetilde{s}(n)$. This relation is depicted in Figure \ref{Fig_SackinColless}. 

Note that starting at $n=1$ and continuing up to $n=32$, the sequences $\widetilde{c}(n)$ and $\widetilde{b}(n)$ are $(1, 1, 1, 1, 1, 2, 1, 1, 1, 3, 3, 4, 3, 3, 1, 1, 1, 4, 6, 10, 16, 21, 13, 11, 13, 21, 16, 10, 6, 4, 1, 1)$ and $(1, 1, 1, 1, 1, 2, 1, 1, 1, 3, 3, 4, 3, 3, 1, 1, 1, 4, 6, 10, 16, 21, 25, 20, 25, 21, 16, 10, 6, 4, 1, 1)$, respectively (where $\widetilde{c}(n)$ results from an exhaustive enumeration of trees with minimal Colless index\footnote{See the discussion section for a recursive formula to calculate $\widetilde{c}(n)$ based on the results by \citet{Coronado2019}.} and $\widetilde{b}(n)$ is obtained from $\widetilde{s}(n)$ (cf. Theorem \ref{recursion}) by excluding all pairs of $n_a$ and $n_b$, where $n_a \neq n_b$ and $n_a, n_b$ odd). The sequence for $\widetilde{c}$ has been submitted to the On-Line Encyclopedia of Integer Sequences OEIS (\citet{OEIS}) as it so far had not been contained in it. It is currently under review there and will shortly be published as sequence \texttt{A307689}.
\end{remark}

\begin{figure}[h!]
	\centering
	\includegraphics[scale=0.3]{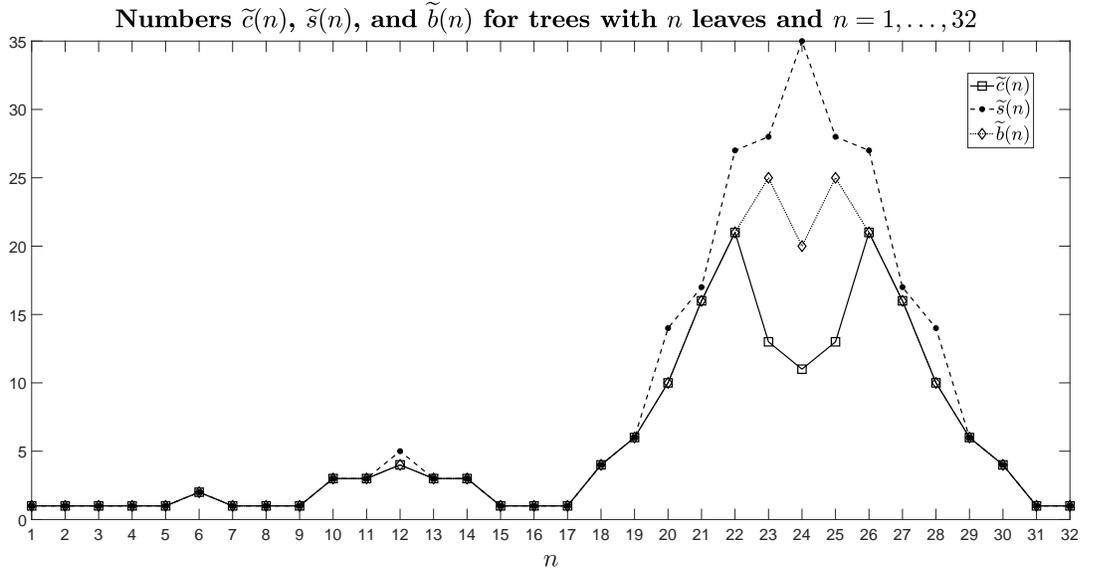}
	\caption{Number $\widetilde{c}(n)$ of rooted binary trees with $n$ leaves and minimal Colless index for $n=1, \ldots, 32$ (lines connecting these discrete data points are drawn for better readability).
	 $\widetilde{s}(n)$ and $\widetilde{b}(n)$ are two upper bounds for $\widetilde{c}(n)$, where $\widetilde{s}(n)$ is the number of rooted binary trees with minimal Sackin index (calculated according to Theorem \ref{recursion}) and $\widetilde{b}(n)$ is an improved upper bound (cf. Remark \ref{improved_bound}). Note that $\widetilde{c}(n)$ results from an exhaustive enumeration of all trees with minimal Colless index.}
	\label{Fig_SackinColless}
\end{figure}

To summarize, in this section we have further characterized trees with minimal Colless index. Additionally, we have given two upper bounds for the number of trees with minimal Colless index by first relating the Colless index to the Sackin index and then improving the obtained bound by using Theorem \ref{oddodd}.

\section{Discussion}
In this manuscript, we have thoroughly analyzed extremal properties of the Colless balance index. We have focused on the minimal Colless index of a tree with $n$ leaves and have both given a recursive formula as well as an explicit expression for this value, where the latter shows a surprising connection of the minimal Colless index to the \emph{Blancmange/Takagi} curve, a fractal curve. 
While the recursive formula\footnote{Note that this formula was independently also discovered by \citet{Coronado2019}.} directly yields a class of trees with minimal Colless index, namely the class of \emph{maximally balanced trees}, we have subsequently introduced another class of trees with minimal Colless index, namely the class of \emph{GFB trees}. Note that this class of trees might somehow be related to the explicit formula for the Colless value stated by \citet{Coronado2019}. On the other hand, our own explicit formula, as stated above, is more suitable to express the fractal structure of the minimal Colless index by relating it to the famous Blancmange curve.

Anyway, while the two mentioned classes of trees, i.e. maximally balanced trees and GFB trees, as well as their corresponding leaf partitionings yield trees with minimal Colless index, we have additionally shown that a tree $T=(T_a,T_b)$ with $n_a \neq n_b$ and $n_a, n_b$ odd, cannot have minimal Colless index, while for $n_a \neq n_b$ and $n_a, n_b$ even it may or may not have minimal Colless index. 

However, an independent full characterization of trees with minimal Colless index has recently been achieved by \citet{Coronado2019}, and the authors also characterize valid leaf partionings $n_a$ and $n_b$ for a tree with $n=n_a+n_b$ leaves and minimal Colless index. This characterization, which can be found in Proposition 11 of \citet{Coronado2019}, can be used to improve the bound $\widetilde{b}(n)$ as presented in Figure \ref{Fig_SackinColless} of our manuscript by summing only over those pairs $(n_a,n_b)$ that are valid due to this proposition. However, note that this improved bound $\widehat{b}(n)$ is still not sharp -- this can be seen e.g. by considering the tree depicted in Figure \ref{Fig_coronadobound}. This tree is a tree on $n=23$ leaves with minimal Sackin index and with $n_a=12$ and $n_b=11$, which is a combination of $n_a$ and $n_b$ that is explicitly allowed by Proposition 11 of \citet{Coronado2019} (which is correct, because there are in fact trees with 23 leaves and minimal Colless index and leaf partitioning $(n_a,n_b)=(12,11)$). However, as subtree $T_a$ with $n_a=12$ leaves consists of two maximal pending subtrees with 7 and 5 leaves, respectively (and thus a combination of two different odd numbers), by Theorem \ref{oddodd} in our manuscript, $T_a$ is not a tree with minimal Colless value, and thus by Lemma \ref{max_subtrees}, $T$ is not a tree with minimal Colless value, either. In fact, the minimal Colless value for $n=23$ is $c_{23}=10$ by Theorem \ref{colless_explicit}, but the tree depicted in Figure \ref{Fig_coronadobound} has Colless value $\mathcal{C}(T)=12$. 

\begin{figure}[h!]
	\centering
	\includegraphics[scale=0.3]{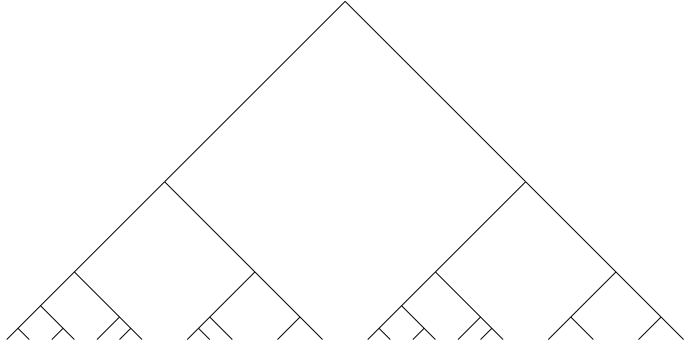}
	\caption{A rooted binary tree $T$ with $23$ leaves and minimal Sackin index (namely 28) and one possible Colless optimal leaf partitioning at the root, namely $n_a=12$ and $n_b=11$ (cf. \citet[Prop. 11]{Coronado2019}), but with Colless index 12, where the minimum would be 10. Thus, $T$ is not Colless minimal.}
	\label{Fig_coronadobound}
\end{figure}

By relating trees with minimal Colless index with trees with minimal Sackin index, we have shown that the two classic and most frequently used tree balance indices are actually closely related, and we have used this insight to present an upper bound for the number of Colless minimal trees. 

However, by denoting the set of valid $(n_a,n_b)$ pairs by $QB(n)$, i.e. $QB(n)=\{(n_a,n_b): n_a+n_b=n \mbox{ and $\exists$ a tree $T$ on $n$ leaves: $\mathcal{C}(T)=c_n$}$ $\mbox{ and with leaf partitioning $(n_a,n_b)$} \}$, a set which was characterized by \citet{Coronado2019}, one can quite easily derive a recursive formula for the number of Colless minima (in the same way as Theorem \ref{recursion} works for the number of Sackin minima):
\begin{itemize}
\item $\widetilde{c}(1)=1$
\item $\widetilde{c}(n)=  \sum\limits_{\substack{(n_a,n_b): \\n_a+n_b=n,\\ n_a\neq n_b, \\
(n_a,n_b)\in QB(n)}} \left( \widetilde{c}(n_a)\cdot \widetilde{c}(n_b) \right) +\widetilde{f}(n),$

\end{itemize} where $\widetilde{f}(n)=\begin{cases}0 & \mbox{if $n$ is odd} \\ {\widetilde{c}(\frac{n}{2})+1 \choose 2}& \mbox{else. }\end{cases}$. 

Note that the binomial coefficient in $\widetilde{f}$ prevents counting symmetries twice in the case where $n_a=n_b=\frac{n}{2}$. The correctness of this formula is a direct consequence of Lemma \ref{max_subtrees}, which implies that each Colless minimal tree has two maximal pending subtrees which are also Colless minimal, combined with the definition of the set $QB(n)$, which ensures that we only sum over pairs $(n_a,n_b)$ which indeed imply Colless minimal trees on $n$ leaves. 

Note that this is the first formula in the literature which enables us to calculate $\widetilde{c}(n)$, and we have submitted the resulting sequence to the Online Encyclopedia of Integer Sequences \citet{OEIS} as it was not previously listed there. It is currently under review there and will shortly be published as sequence \texttt{A307689}.
However, it would definitely be of interest to find an explicit formula for $\widetilde{c}(n)$ and to analyze if the fractal structure of the sequence of the minimal Colless index induced by the Blancmange curve is reflected in the sequence of the number of trees that achieve it (as is suggested by Figure \ref{Fig_SackinColless}). These are topics for future research. 

\section*{References}
\bibliographystyle{model1-num-names}\biboptions{authoryear}
\bibliography{references}

\begin{thebibliography}{22}
\expandafter\ifx\csname natexlab\endcsname\relax\def\natexlab#1{#1}\fi
\providecommand{\url}[1]{\texttt{#1}}
\providecommand{\href}[2]{#2}
\providecommand{\path}[1]{#1}
\providecommand{\DOIprefix}{doi:}
\providecommand{\ArXivprefix}{arXiv:}
\providecommand{\URLprefix}{URL: }
\providecommand{\Pubmedprefix}{pmid:}
\providecommand{\doi}[1]{\href{http://dx.doi.org/#1}{\path{#1}}}
\providecommand{\Pubmed}[1]{\href{pmid:#1}{\path{#1}}}
\providecommand{\bibinfo}[2]{#2}
\ifx\xfnm\relax \def\xfnm[#1]{\unskip,\space#1}\fi
\bibitem[{Knuth(1997)}]{Knuth1997}
\bibinfo{author}{D.~E. Knuth}, \bibinfo{title}{The {A}rt of {C}omputer
  {P}rogramming}, volume~\bibinfo{volume}{3}, \bibinfo{publisher}{Pearson
  Education}, \bibinfo{year}{1997}.
\bibitem[{Mooers and Heard(1997)}]{Mooers1997}
\bibinfo{author}{A.~O. Mooers}, \bibinfo{author}{S.~B. Heard},
\newblock \bibinfo{title}{Inferring {E}volutionary {P}rocess from
  {P}hylogenetic {T}ree {S}hape},
\newblock \bibinfo{journal}{The Quarterly Review of Biology}
  \bibinfo{volume}{72} (\bibinfo{year}{1997}) \bibinfo{pages}{31--54}.
\bibitem[{Steel(2016)}]{Steel2016}
\bibinfo{author}{M.~Steel}, \bibinfo{title}{Phylogeny: discrete and random
  processes in evolution}, \bibinfo{publisher}{SIAM}, \bibinfo{year}{2016}.
\bibitem[{Blum and Fran{\c{c}}ois(2006)}]{Blum2006}
\bibinfo{author}{M.~G.~B. Blum}, \bibinfo{author}{O.~Fran{\c{c}}ois},
\newblock \bibinfo{title}{Which {R}andom {P}rocesses {D}escribe the {T}ree of
  {L}ife? {A} {L}arge-{S}cale {S}tudy of {P}hylogenetic {T}ree {I}mbalance},
\newblock \bibinfo{journal}{Systematic Biology} \bibinfo{volume}{55}
  (\bibinfo{year}{2006}) \bibinfo{pages}{685--691}.
\bibitem[{Bartoszek(2018)}]{Bartoszek2018}
\bibinfo{author}{K.~Bartoszek},
\newblock \bibinfo{title}{Exact and approximate limit behaviour of the {Y}ule
  tree's cophenetic index},
\newblock \bibinfo{journal}{Mathematical Biosciences} \bibinfo{volume}{303}
  (\bibinfo{year}{2018}) \bibinfo{pages}{26 -- 45}.
\bibitem[{Nievergelt and Reingold(1973)}]{Nievergelt1973}
\bibinfo{author}{J.~Nievergelt}, \bibinfo{author}{E.~M. Reingold},
\newblock \bibinfo{title}{Binary search trees of bounded balance},
\newblock \bibinfo{journal}{SIAM journal on Computing} \bibinfo{volume}{2}
  (\bibinfo{year}{1973}) \bibinfo{pages}{33--43}.
\bibitem[{Walker and Wood(1976)}]{Walker1976}
\bibinfo{author}{A.~Walker}, \bibinfo{author}{D.~Wood},
\newblock \bibinfo{title}{Locally balanced binary trees},
\newblock \bibinfo{journal}{The Computer Journal} \bibinfo{volume}{19}
  (\bibinfo{year}{1976}) \bibinfo{pages}{322--325}.
\bibitem[{Chang and Iyengar(1984)}]{Chang1984}
\bibinfo{author}{H.~Chang}, \bibinfo{author}{S.~S. Iyengar},
\newblock \bibinfo{title}{Efficient {A}lgorithms to {G}lobally {B}alance a
  {B}inary {S}earch {T}ree},
\newblock \bibinfo{journal}{Communication of the ACM}  (\bibinfo{year}{1984})
  \bibinfo{pages}{695--702}.
\bibitem[{Andersson(1993)}]{Andersson1993}
\bibinfo{author}{A.~Andersson},
\newblock \bibinfo{title}{Balanced search trees made simple},
\newblock in: \bibinfo{booktitle}{Lecture Notes in Computer Science},
  \bibinfo{publisher}{Springer Berlin Heidelberg}, \bibinfo{year}{1993}, pp.
  \bibinfo{pages}{60--71}. \DOIprefix\doi{10.1007/3-540-57155-8_236}.
\bibitem[{Pushpa and Vinod(2007)}]{Pushpa2007}
\bibinfo{author}{S.~Pushpa}, \bibinfo{author}{P.~Vinod},
\newblock \bibinfo{title}{Binary {S}earch {T}ree {B}alancing {M}ethods: {A}
  {C}ritical {S}tudy},
\newblock \bibinfo{journal}{International Journal of Computer Science and
  Network Security} \bibinfo{volume}{7} (\bibinfo{year}{2007})
  \bibinfo{pages}{237--243}.
\bibitem[{Sackin(1972)}]{Sackin1972}
\bibinfo{author}{M.~J. Sackin},
\newblock \bibinfo{title}{\enquote{Good} and \enquote{{B}ad} {P}henograms},
\newblock \bibinfo{journal}{Systematic Zoology} \bibinfo{volume}{21}
  (\bibinfo{year}{1972}) \bibinfo{pages}{225}.
\bibitem[{Colless(1982)}]{Colless1982}
\bibinfo{author}{D.~Colless},
\newblock \bibinfo{title}{Review of \enquote{Phylogenetics: the theory and
  practice of phylogenetic systematics}},
\newblock \bibinfo{journal}{Systematic Zoology} \bibinfo{volume}{31}
  (\bibinfo{year}{1982}) \bibinfo{pages}{100--104}.
\bibitem[{Mir et~al.(2013)Mir, Roselló, and Rotger}]{Mir2013}
\bibinfo{author}{A.~Mir}, \bibinfo{author}{F.~Roselló},
  \bibinfo{author}{L.~Rotger},
\newblock \bibinfo{title}{A new balance index for phylogenetic trees},
\newblock \bibinfo{journal}{Mathematical Biosciences} \bibinfo{volume}{241}
  (\bibinfo{year}{2013}) \bibinfo{pages}{125--136}.
\bibitem[{Blum et~al.(2006)Blum, François, and Janson}]{Blum2006a}
\bibinfo{author}{M.~G.~B. Blum}, \bibinfo{author}{O.~François},
  \bibinfo{author}{S.~Janson},
\newblock \bibinfo{title}{The mean, variance and limiting distribution of two
  statistics sensitive to phylogenetic tree balance},
\newblock \bibinfo{journal}{Ann. Appl. Probab.} \bibinfo{volume}{16}
  (\bibinfo{year}{2006}) \bibinfo{pages}{2195--2214}.
\bibitem[{Mir et~al.(2018)Mir, Rotger, and Rossell{\'{o}}}]{Mir2018}
\bibinfo{author}{A.~Mir}, \bibinfo{author}{L.~Rotger},
  \bibinfo{author}{F.~Rossell{\'{o}}},
\newblock \bibinfo{title}{Sound {C}olless-like balance indices for
  multifurcating trees},
\newblock \bibinfo{journal}{{PLOS} {ONE}} \bibinfo{volume}{13}
  (\bibinfo{year}{2018}) \bibinfo{pages}{e0203401}.
\bibitem[{{Coronado} and {Rossell{\'o}}(2019)}]{Coronado2019}
\bibinfo{author}{T.~M. {Coronado}}, \bibinfo{author}{F.~{Rossell{\'o}}},
\newblock \bibinfo{title}{{The minimum value of the Colless index}},
\newblock \bibinfo{journal}{arXiv e-prints}  (\bibinfo{year}{2019})
  \bibinfo{pages}{arXiv:1903.11670}.
\bibitem[{Takagi(1901)}]{Takagi1901}
\bibinfo{author}{T.~Takagi},
\newblock \bibinfo{title}{A simple example of the continuous function without
  derivative},
\newblock \bibinfo{journal}{Tokyo Sugaku-Butsurigakkwai Hokoku}
  \bibinfo{volume}{1} (\bibinfo{year}{1901}) \bibinfo{pages}{F176--F177}.
\bibitem[{Allaart and Kawamura(2012)}]{Allaart2012}
\bibinfo{author}{P.~C. Allaart}, \bibinfo{author}{K.~Kawamura},
\newblock \bibinfo{title}{The {T}akagi {F}unction: a {S}urvey},
\newblock \bibinfo{journal}{Real Analysis Exchange} \bibinfo{volume}{37}
  (\bibinfo{year}{2012}) \bibinfo{pages}{1--54}.
\bibitem[{Fischer(2018)}]{Fischer2018}
\bibinfo{author}{M.~Fischer}, \bibinfo{title}{Extremal values of the {S}ackin
  balance index for rooted binary trees}, \bibinfo{year}{2018}.
  \href{http://arxiv.org/abs/http://arxiv.org/abs/1801.10418v3}{\tt
  arXiv:http://arxiv.org/abs/1801.10418v3}.
\bibitem[{Sloane(1964)}]{OEIS}
\bibinfo{author}{N.~J.~A. Sloane}, \bibinfo{title}{The on-line encyclopedia of
  integer sequences}, \bibinfo{year}{1964}. \URLprefix \url{http://oeis.org}.
\bibitem[{Heard(1992)}]{Heard1992}
\bibinfo{author}{S.~B. Heard},
\newblock \bibinfo{title}{Patterns in {T}ree {B}alance among {C}ladistic,
  {P}henetic, and {R}andomly {G}enerated {P}hylogenetic {T}rees},
\newblock \bibinfo{journal}{Evolution} \bibinfo{volume}{46}
  (\bibinfo{year}{1992}) \bibinfo{pages}{1818--1826}.
\bibitem[{Rogers(1993)}]{Rogers1993}
\bibinfo{author}{J.~S. Rogers},
\newblock \bibinfo{title}{Response of {C}olless's {T}ree {I}mbalance to
  {N}umber of {T}erminal {T}axa},
\newblock \bibinfo{journal}{Systematic Biology} \bibinfo{volume}{42}
  (\bibinfo{year}{1993}) \bibinfo{pages}{102}.

\end{thebibliography}

\section{Appendix}
\setcounter{theorem}{0}
\begin{theorem}
Let $c_n$ be the minimal Colless index for a rooted binary tree with $n$ leaves. Then, $c_1=c_2=0$, and for all $n \in \mathbb{N}_{\geq 1}$ we have
\begin{align*}
&c_{2n}=2c_n, \\
&c_{2n+1}=c_{n+1}+c_{n}+1.
\end{align*}
\end{theorem}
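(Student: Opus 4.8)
The plan is to first reformulate the problem recursively. Combining Lemma~\ref{colless_sum} with Lemma~\ref{max_subtrees} (and the definition of $c_N$ as a minimum over all trees), one obtains the identity
\[
c_N \;=\; \min_{\substack{n_a+n_b=N\\ n_a\ge n_b}}\bigl(c_{n_a}+c_{n_b}+n_a-n_b\bigr),
\]
and, moreover, every tree realising the minimum may be assumed to have Colless-minimal maximal pending subtrees. The base cases are immediate: for $n\in\{1,2,3\}$ there is a unique rooted binary tree, with Colless index $0$, $0$ and $1$, so $c_1=c_2=0$ and $c_3=1$. The two \emph{upper} bounds $c_{2n}\le 2c_n$ and $c_{2n+1}\le c_{n+1}+c_n+1$ then need no work at all: they follow from the displayed formula simply by plugging in the balanced splits $(n_a,n_b)=(n,n)$ and $(n_a,n_b)=(n+1,n)$, respectively.

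The real content is the two matching \emph{lower} bounds, which I would prove together by strong induction on $N$. For $c_{2n}\ge 2c_n$: fix an arbitrary admissible split $(n_a,n_b)$ of $2n$ with $n_a\ge n_b$; the goal is to show $c_{n_a}+c_{n_b}+n_a-n_b\ge 2c_n$. The strategy is to ``push the split toward balance'': expand both $c_{n_a}$ and $c_{n_b}$ using the inductive recursion --- selecting the even or the odd case according to the parities of $n_a$ and $n_b$ --- so that the expression is rewritten through $c$-values at arguments near $n_a/2$ and $n_b/2$, and simultaneously bound $c_n$ from above by $c_p+c_q+(p-q)$ for the sub-splits of $n$ that pair those arguments up (each such bound is just Lemma~\ref{colless_sum} applied to a tree assembled from Colless-optimal pieces). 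Regrouping the resulting terms then exhibits the candidate value as $2c_n$ plus a sum of manifestly non-negative quantities. The parity of $n$ governs how $n_a$ and $n_b$ distribute over even and odd values, so this verification splits into a small number of cases, but in each of them the bookkeeping closes.

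The odd bound $c_{2n+1}\ge c_{n+1}+c_n+1$ would be handled by exactly the same device --- expand $c_{n_a}$ and $c_{n_b}$ by the inductive recursion, then bound the outcome below using $c_{n+1}\le c_p+c_q+(p-q)$ and $c_n\le c_{p'}+c_{q'}+(p'-q')$ for the appropriately paired sub-splits --- and this is the more delicate of the two, so it is the step I expect to be the main obstacle; I would relegate its detailed case analysis to an appendix. More broadly, the genuine difficulty in the whole theorem lies in this lower-bound direction: it is precisely the assertion that the maximally balanced root split is optimal, and it appears to require the term-by-term regrouping just described rather than any soft argument. One alternative would be an exchange argument that rotates a Colless-optimal tree at the root so as to reduce $n_a-n_b$, but the change in $\mathcal{C}$ under such a rotation involves several absolute-value sign cases and is no cleaner, so I would stay with the induction on the minimum formula.
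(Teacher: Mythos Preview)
Your proposal is correct and follows essentially the same approach as the paper: establish the upper bounds by choosing the balanced split in the min formula, then prove the matching lower bounds by strong induction, expanding each $c_{n_a}$ and $c_{n_b}$ via the inductive recursion (splitting on parity), bounding $c_n$ (respectively $c_{n+1}$) above via the min formula, and regrouping to exhibit each candidate as the target plus something non-negative. The paper likewise relegates the more tedious odd case to the appendix.
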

\begin{proof}
We show by induction on $n$ that
\begin{align*}
&c_{2n} \geq 2c_n \text{ and } \hspace*{70mm} &&\eqref{ib1} \\
&c_{2n+1} \geq c_{n+1}+c_{n}+1. \hspace*{70mm} &&\eqref{ib2} 
\end{align*}
Here we show the proof of \eqref{ib2}. The proof of \eqref{ib1} is given in the main part of the paper.

By Lemma \ref{colless_sum} and Lemma \ref{max_subtrees} we have that
\begin{align}
c_{2n+3}= \min\{&c_{2n+2}+c_1+2n+1, c_{2n+1}+c_2+2n-1, c_{2n}+c_3+2n-3, \dots, \nonumber \\
 &c_{n+3}+c_{n}+3, c_{n+2}+c_{n+1}+1 \}. \label{i2}
\end{align}

Let $n$ be even and consider \eqref{i2}, which can be rewritten by the inductive hypothesis as:
\begin{align}
c_{2n+3}\geq \min\{&2c_{n+1}+c_1+2n+1, c_{n+1}+c_n+1+2c_1+2n-1, 2c_{n}+c_{2}+c_1+1+2n-1, \dots, \nonumber \\
 &c_{\frac{n}{2}+1}+c_{\frac{n}{2}+2}+1+2c_{\frac{n}{2}}+3, c_{n+2}+c_{n+1}+1 \} \nonumber \\
 = \min\{&c_{n+1}+c_1+n+c_{n+1}+n+1, c_{n+1}+c_1+n+c_n+c_1+n-1+1, \nonumber \\
 &c_{n}+c_2+n-2+c_n+c_1+n-1+3,  \dots, \nonumber \\
 &c_{\frac{n}{2}+2}+c_{\frac{n}{2}}+2+c_{\frac{n}{2}+1}+c_{\frac{n}{2}}+1+1, c_{n+2}+c_{n+1}+1 \}. \label{i1.5}
\end{align}
Again by Lemma \ref{colless_sum} and Lemma \ref{max_subtrees}, we have that
\begin{align}
c_{n+2}= \min\{&c_{n+1}+c_1+n, c_{n}+c_2+n-2, c_{n-1}+c_3+n-4, \nonumber \\
 &\dots, 2c_{\frac{n}{2}+1} \}, \label{i1.6}
\end{align}
and thus we have for example $c_{n+2} \leq c_{n+1}+c_1+n$. Then by using \eqref{i1.6}, \eqref{i1.5} becomes the following 
\begin{align}
c_{2n+3}&\geq \min\{c_{n+2}+c_{n+1}+1+\underbrace{n}_{\geq 0}, c_{n+2}+c_{n+1}+1, c_{n+2}+c_{n+1}+1+\underbrace{2}_{\geq 0}, \nonumber \\ 
&\qquad \dots, c_{n+2}+c_{n+1}+1, c_{n+2}+c_{n+1}+1 \} \nonumber \\
&= c_{n+2}+c_{n+1}+1.
\end{align}
This completes the proof of Equation \eqref{ib2} for $n$ even.

Now, let $n$ be odd and consider \eqref{i2}, which can be rewritten similar to the case before by the inductive hypothesis:
\begin{align}
c_{2n+3}\geq \min\{&2c_{n+1}+c_1+2n+1, c_{n+1}+c_n+1+2c_1+2n-1, 2c_{n}+c_{2}+c_1+1+2n-1, \nonumber \\
 &\dots, 2c_{\frac{n+1}{2}+1}+c_{\frac{n+1}{2}}+c_{\frac{n-1}{2}}+1+3, c_{n+2}+c_{n+1}+1 \} \nonumber \\
 = \min\{&c_{n+1}+c_1+n+c_{n+1}+n+1, c_{n+1}+c_1+n+c_n+c_1+n-1+1, \nonumber \\
 &c_{n}+c_2+n-2+c_n+c_1+n-1+3, \dots,  \nonumber \\
 &c_{\frac{n+1}{2}+1}+c_{\frac{n+1}{2}}+1+c_{\frac{n+1}{2}+1}+c_{\frac{n-1}{2}}+2+1, c_{n+2}+c_{n+1}+1 \}. \label{i1.7}
\end{align}
By Lemma \ref{colless_sum} and Lemma \ref{max_subtrees}, we have that
\begin{align}
c_{n+2}= \min\{&c_{n+1}+c_1+n, c_{n}+c_2+n-2, c_{n-1}+c_3+n-4, \nonumber \\
 &\dots, c_{\frac{n+1}{2}+1}+c_{\frac{n+1}{2}}+1 \}, \label{i1.8}
\end{align}
and thus we have for example $c_{n+2} \leq c_{n+1}+c_1+n$. Then by using \eqref{i1.8}, \eqref{i1.7} becomes the following 
\begin{align}
c_{2n+3}&\geq \min\{c_{n+2}+c_{n+1}+1+\underbrace{n}_{\geq 0}, c_{n+2}+c_{n+1}+1, c_{n+2}+c_{n+1}+1+\underbrace{2}_{\geq 0}, \nonumber \\ 
&\qquad\dots, c_{n+2}+c_{n+1}+1, c_{n+2}+c_{n+1}+1 \} \nonumber \\
&= c_{n+2}+c_{n+1}+1.
\end{align}
This completes the proof of Equation \eqref{ib2} for $n$ odd.
Thus, also \eqref{ib2} holds for all $n$.
\end{proof}

\setcounter{lemma}{4}
\begin{lemma}
Let $n \in (2^{k_n-1},2^{k_n})$ be odd, where $k_n = \lceil \log_2 (n) \rceil$, and let $n-1>2^{k_n-1}$. Moreover, let $s(x) = \min\limits_{z \in \mathbb{Z}} \vert x-z \vert$ and let 
$$ f_i(n) \coloneqq \frac{s(2^{i-k_n+1} \cdot n)}{2^{i-k_n+1}}.$$
Then for $0 \leq i \leq k_n-3$,
$$f_i(n+1) + f_i(n-1)=2 \cdot f_i(n).$$
\end{lemma}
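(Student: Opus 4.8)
The plan is to strip the identity down to a statement about the single function $s(x)=\min_{z\in\mathbb Z}|x-z|$ and then exploit the fact that $s$ is piecewise linear. First I would clear denominators: writing $m\coloneqq i-k_n+1$, so that $f_i(\,\cdot\,)=s(2^m\,\cdot\,)/2^m$, multiplying the desired equality through by $2^m$ turns it into
$$ s\!\bigl(2^m(n+1)\bigr)+s\!\bigl(2^m(n-1)\bigr)=2\,s\!\bigl(2^m n\bigr). $$
Setting $x\coloneqq 2^m n$ and $h\coloneqq 2^m$, this reads $s(x-h)+s(x+h)=2\,s(x)$, i.e.\ the assertion that $s$ agrees with its linear interpolation between the symmetric points $x\pm h$.

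Next I would recall the shape of $s$: it is continuous, $1$-periodic, and piecewise linear with slopes $\pm 1$, its only breakpoints being the half-integers $\{p/2:p\in\mathbb Z\}$ (the integers are local minima of value $0$, the proper half-integers local maxima of value $\tfrac12$). Hence, if the open interval $(x-h,x+h)$ contains no half-integer, then $s$ is affine on the closed interval $[x-h,x+h]$, so $s(x)=\tfrac12\bigl(s(x-h)+s(x+h)\bigr)$ since $x$ is the midpoint. Everything thus reduces to checking that no half-integer lies strictly between $x-h$ and $x+h$.

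The key step is precisely this check. Put $j\coloneqq k_n-1-i$; the hypothesis $0\le i\le k_n-3$ yields $j\ge 2$, and $2^m=2^{-j}$, so $x\pm h=(n\pm 1)/2^j$. A half-integer $p/2$ satisfies $(n-1)/2^j<p/2<(n+1)/2^j$ iff $n-1<p\,2^{\,j-1}<n+1$; since $p\,2^{\,j-1}\in\mathbb Z$, this forces $p\,2^{\,j-1}=n$, which is impossible because $n$ is odd whereas $2^{\,j-1}\ge 2$. Therefore $(x-h,x+h)$ contains no half-integer, $s$ is affine on $[x-h,x+h]$, and the midpoint identity gives $s(x-h)+s(x+h)=2\,s(x)$; dividing by $2^m$ recovers the claim.

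The only real obstacle is the exponent bookkeeping: one must verify that the bound $i\le k_n-3$ is exactly what guarantees $j-1\ge 1$, since it is divisibility by $2^{\,j-1}\ge 2$ — ruled out by $n$ being odd — that prevents a breakpoint from landing inside the interval. (The remaining hypotheses, namely $n$ odd and $n\in(2^{k_n-1},2^{k_n})$ with $n-1>2^{k_n-1}$, serve only to ensure that the lemma is invoked in the regime $k_n\ge 3$; the oddness of $n$ is the single arithmetic fact doing the work.)
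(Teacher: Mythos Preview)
Your proof is correct and follows essentially the same approach as the paper's. Both arguments reduce the identity to the piecewise linearity of the distance-to-nearest-integer function and then use the oddness of $n$ together with the bound $i\le k_n-3$ to rule out a breakpoint in the relevant interval; the paper rescales so that $f_i(n)$ becomes the distance from $n$ to the nearest multiple of $2^{k_n-i-1}$ and shows the midpoint of the containing interval is even (hence $\neq n$), while you keep the picture on the $s$-side and show no half-integer lies in $(x-h,x+h)$. Your packaging is slightly cleaner in that it avoids the explicit case split $n>m$ versus $n<m$ by invoking the midpoint identity for affine functions directly, but the underlying idea is identical.
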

\begin{proof}
Let $n \in (2^{k_n-1},2^{k_n})$ be odd, let $n-1>2^{k_n-1}$ and let $0 \leq i \leq k_n-3$.
Then,
\begin{align*}
f_i(n) &=\frac{s(2^{i-k_n+1} \cdot n)}{2^{i-k_n+1}} \\
&= \frac{\min\limits_{z \in \mathbb{Z}}\vert 2^{i-k_n+1} \cdot n -z \vert}{2^{i-k_n+1}} \\
&= \min\limits_{z \in \mathbb{Z}} \bigg\vert \frac{2^{i-k_n+1} \cdot n -z}{2^{i-k_n+1}} \bigg\vert \\
&= \min\limits_{z \in \mathbb{Z}} \bigg\vert n - \frac{z}{2^{i-k_n+1}} \bigg\vert, 
\end{align*}
i.e. $f_i(n)$ is the minimal distance of $n$ to an integer multiple of $2^{k_n -i-1}$. 

If $f_i(n) = n - \frac{z}{2^{i-k_n+1}}$, then $n \in [\frac{z}{2^{i-k_n+1}}, \frac{z+1}{2^{i-k_n+1}})$.

If $f_i(n) = \frac{\tilde{z}}{2^{i-k_n+1}} -n$, then $n \in (\frac{\tilde{z}-1}{2^{i-k_n+1}}, \frac{\tilde{z}}{2^{i-k_n+1}}]$. Let $z \coloneqq \tilde{z} -1$, then $n \in (\frac{z}{2^{i-k_n+1}}, \frac{z+1}{2^{i-k_n+1}}]$.

Note that for $0 \leq i \leq k_n-3$ we have $k_n-i-1 \geq 2$. Thus, $2^{k_n-i-1} \in \mathbb{N}$ and $2^{k_n-i-1}$ is a power of $2$. Therefore, $\frac{z}{2^{i-k_n+1}} = 2^{k_n-i-1} \cdot z$ is an even number for all $z \in \mathbb{Z}$, but $n$ is odd by assumption.

Therefore, in both cases we have that $n \in (\frac{z}{2^{i-k_n+1}}, \frac{z+1}{2^{i-k_n+1}})$ for some $z \in \mathbb{Z}$.

Let $m$ be the middle of the interval $(\frac{z}{2^{i-k_n+1}}, \frac{z+1}{2^{i-k_n+1}})$, i.e. 
\begin{align*}
m &= \frac{z}{2^{i-k_n+1}} + \frac{1}{2} \cdot \bigg(\frac{z+1}{2^{i-k_n+1}} - \frac{z}{2^{i-k_n+1}}\bigg) \\
&= \frac{z}{2^{i-k_n+1}} + \frac{1}{2} \cdot \frac{z+1-z}{2^{i-k_n+1}} \\
&= \frac{z}{2^{i-k_n+1}} + \frac{1}{2^{i-k_n+2}}.
\end{align*}

For $0 \leq i \leq k_n-3$ we have that $\frac{1}{2^{i-k_n+2}} = 2^{k_n-i-2} \geq 2^{k_n-(k_n-3)-2} =2$. Thus, $\frac{1}{2^{i-k_n+2}} \in \mathbb{N}$ and $\frac{1}{2^{i-k_n+2}}$ is a power of 2, which leads to the fact that $\frac{1}{2^{i-k_n+2}}$ is even.

As we have already seen, $\frac{z}{2^{i-k_n+1}}$ is even for all $z \in \mathbb{Z}$ as well.

Therefore, $m= \frac{z}{2^{i-k_n+1}} + \frac{1}{2^{i-k_n+2}}$ is an even number and the fact that $n$ is odd gives us $n \neq m$. 

We now distinguish two cases: $n > m$ and $n < m$.
\begin{enumerate}
\item If $n > m$, then we have that $n-1 \geq m$ and thus $n-1, n, n+1 \in [m,\frac{z+1}{2^{i-k_n+1}}]$.
Therefore, we have 
$f_i(n) = \frac{z+1}{2^{i-k_n+1}} -n$, $f_i(n+1) = \frac{z+1}{2^{i-k_n+1}} -(n+1)$ and $f_i(n-1) = \frac{z+1}{2^{i-k_n+1}} -(n-1)$, which gives us
\begin{align*}
f_i(n-1) + f_i(n+1) &= \frac{z+1}{2^{i-k_n+1}} -(n-1) + \frac{z+1}{2^{i-k_n+1}} -(n+1) \\
&= 2 \cdot \bigg( \frac{z+1}{2^{i-k_n+1}} -n \bigg) \\
&= 2 \cdot f_i(n).
\end{align*}
\item If $n < m$, then we have that $n+1 \leq m$ and thus $n-1, n, n+1 \in [\frac{z}{2^{i-k_n+1}},m]$. Therefore, we have $f_i(n) = n - \frac{z}{2^{i-k_n+1}}$, $f_i(n+1) = n+1 - \frac{z}{2^{i-k_n+1}}$ and $f_i(n-1) = n - 1 - \frac{z}{2^{i-k_n+1}}$, which gives us
\begin{align*}
f_i(n-1) + f_i(n+1) &= n - 1 - \frac{z}{2^{i-k_n+1}} + n+1 - \frac{z}{2^{i-k_n+1}} \\
&= 2 \cdot \bigg(n-\frac{z}{2^{i-k_n+1}} \bigg) \\
&= 2 \cdot f_i(n).
\end{align*}
\end{enumerate}
So in both cases the claim holds, which completes the proof.
\end{proof}

\setcounter{proposition}{0}
\begin{proposition} 
Let $n \in \mathbb{N}_{\geq 1}$ and let $k_n \coloneqq \lceil \log_2(n) \rceil$. Then, we have for the minimal Colless index $c_n$:
\begin{enumerate}
\item If $n=2^{k_n}+1$, then $c_n=k_n$.
\item If $n=2^{k_n}-1$, then $c_n=k_n-1$.
\item For $n \in (2^{k_n-1},2^{k_n})$ and $j \in \{1,\dots, 2^{k_n-1}-1 \}$ we have $c_{2^{k_n-1}+j} = c_{2^{k_n}-j}.$
\end{enumerate}
\end{proposition}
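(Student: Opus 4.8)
The plan is to settle Parts~1 and 2 by short inductions driven purely by the recursions of Theorem~\ref{colless_minimum}, and Part~3 directly from the explicit formula of Theorem~\ref{colless_explicit}. Throughout I use that $c_{2^m}=0$ for all $m\ge 0$, which is immediate from iterating $c_{2n}=2c_n$ down to $c_1=0$ (or from Theorem~\ref{min_colless}).

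For Part~1, I read the claim as $c_{2^m+1}=m$, writing $m$ for the exponent (the quantity called $k_n$ in the statement; note $\lceil\log_2(2^m+1)\rceil=m+1$), and prove it by induction on $m$, the base case $m=1$ being $c_3=1$ directly. Since $2^m+1$ is odd, Theorem~\ref{colless_minimum} gives $c_{2^m+1}=c_{2^{m-1}+1}+c_{2^{m-1}}+1=(m-1)+0+1=m$ by the inductive hypothesis together with $c_{2^{m-1}}=0$. (This is essentially the subcase $n-1=2^{k_n-1}$ already treated inside the proof of Theorem~\ref{colless_explicit}.) Part~2 is the analogous induction: with $n=2^m-1$ and $2^m-1=2(2^{m-1}-1)+1$ one gets $c_{2^m-1}=c_{2^{m-1}}+c_{2^{m-1}-1}+1=0+(m-2)+1=m-1$, and since $\lceil\log_2(2^m-1)\rceil=m$ this is exactly $c_n=k_n-1$; the base case is $m=1$ ($c_1=0$).

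For Part~3 I would apply Theorem~\ref{colless_explicit}. Set $n=2^{k_n-1}+j$ and $n'=2^{k_n}-j$ with $j\in\{1,\dots,2^{k_n-1}-1\}$; then both $n$ and $n'$ lie in $(2^{k_n-1},2^{k_n})$, so $\lceil\log_2 n\rceil=\lceil\log_2 n'\rceil=k_n$, and Theorem~\ref{colless_explicit} (applied with this common value $k_n$) gives $c_n=\sum_{i=0}^{k_n-2}\frac{s(2^{i-k_n+1}n)}{2^{i-k_n+1}}$ and $c_{n'}=\sum_{i=0}^{k_n-2}\frac{s(2^{i-k_n+1}n')}{2^{i-k_n+1}}$. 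It therefore suffices to show $s(2^{i-k_n+1}n)=s(2^{i-k_n+1}n')$ for each $0\le i\le k_n-2$. This holds because $2^{i-k_n+1}(n+n')=2^{i-k_n+1}\cdot 3\cdot 2^{k_n-1}=3\cdot 2^{i}\in\mathbb{Z}$ for $i\ge 0$, whence by Lemma~\ref{subbaditivity_of_s}, Part~2, $s(2^{i-k_n+1}n')=s(3\cdot 2^{i}-2^{i-k_n+1}n)=s(-2^{i-k_n+1}n)=s(2^{i-k_n+1}n)$, the last step using $s(-x)=s(x)$, which is immediate from the definition of $s$. Summing these identities term by term yields $c_n=c_{n'}$, i.e.\ $c_{2^{k_n-1}+j}=c_{2^{k_n}-j}$.

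Everything here is routine, so I do not anticipate a genuine obstacle; the only care required is bookkeeping --- confirming $c_{2^m}=0$ and the relevant ceilings, checking the two base cases, and in Part~3 ensuring the index range $0\le i\le k_n-2$ forces $2^i$ and $3\cdot 2^i$ to be integers so that Lemma~\ref{subbaditivity_of_s} applies term by term. As an independent consistency check, Part~2 also drops out of Parts~1 and 3: taking $k_n=m$ and $j=2^{m-1}-1$ gives $c_{2^m-1}=c_{2^{k_n}-j}=c_{2^{m-1}+1}=m-1$.
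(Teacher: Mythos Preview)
Your proposal is correct and follows essentially the same approach as the paper: Parts~1 and~2 are short inductions driven by the recursion of Theorem~\ref{colless_minimum} together with $c_{2^m}=0$, and Part~3 is a term-by-term comparison in the explicit formula of Theorem~\ref{colless_explicit} via Lemma~\ref{subbaditivity_of_s}, Part~2, and the symmetry $s(-x)=s(x)$. Your observation in Part~3 that $n+n'=3\cdot 2^{k_n-1}$, so that $2^{i-k_n+1}(n+n')=3\cdot 2^i\in\mathbb{Z}$, is a slightly more economical way to package the same integer-shift argument the paper carries out in two steps (first removing $2^i$, then reinserting $2^{i+1}$).
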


\begin{proof}
\begin{enumerate}
\item
The proof is by induction on $k_n$. For $k_n=0$ we have $c_{2^{0}+1}=c_2=0=k_n$, which gives the base case of the induction. Now, we assume that the claim holds up to $k_n$, and we show that it also holds for $k_n+1$. Let $n=2^{k_n+1}+1$. Then,
\begin{align*}
c_{2^{k_n+1}+1} &= c_{2^{k_n}+1} + c_{2^{k_n}} +1 &&\text{by Theorem } \ref{colless_minimum} \\
&= c_{2^{k_n}+1} +1 &&\text{by Theorem } \ref{min_colless} \\
&= k_n+1, &&\text{by the inductive hypothesis}
\end{align*}
which completes the proof.
\item 
The proof is by induction on $k_n$. For $k_n=1$ we have $c_{2^{1}-1}=c_1=0=k_n-1$, which gives the base case of the induction. Now, we assume that the claim holds up to $k_n$, and we show that it also holds for $k_n+1$. Let $n=2^{k_n+1}-1$. Then,
\begin{align*}
c_{2^{k_n+1}-1} &= c_{2^{k_n}} + c_{2^{k_n}-1} +1 &&\text{by Theorem } \ref{colless_minimum} \\
&= c_{2^{k_n}-1} +1 &&\text{by Theorem } \ref{min_colless} \\
&= k_n-1+1 &&\text{by the inductive hypothesis} \\
&= k_n,
\end{align*}
which completes the proof.
\item 
Let $n \in (2^{k_n-1},2^{k_n})$ and let $j \in \{1,\dots, 2^{k_n-1}-1 \}$. Then,
\begin{align*}
c_{2^{k_n-1}+j} &= \sum\limits_{i=0}^{k_n-2} \frac{s(2^{i-k_n+1} \cdot (2^{k_n-1}+j))}{2^{i-k_n+1}} \, \text{ by Theorem \ref{colless_explicit}} \\
&= \sum\limits_{i=0}^{k_n-2} \frac{s(2^i + 2^{i-k_n+1} \cdot j)}{2^{i-k_n+1}} \\
&= \sum\limits_{i=0}^{k_n-2} \frac{s(2^{i-k_n+1} \cdot j)}{2^{i-k_n+1}} \; \text{ by Lemma \ref{subbaditivity_of_s}, Part 2,  as } 2^{i} \in \mathbb{Z} \\
&= \sum\limits_{i=0}^{k_n-2} \frac{s(-2^{i-k_n+1} \cdot j)}{2^{i-k_n+1}} \\
&= \sum\limits_{i=0}^{k_n-2} \frac{s(2^{i+1}-2^{i-k_n+1} \cdot j)}{2^{i-k_n+1}} 
\; \text{ by Lemma \ref{subbaditivity_of_s}, Part 2,  as } 2^{i+1} \in \mathbb{Z}\\
&= \sum\limits_{i=0}^{k_n-2} \frac{s(2^{i-k_n+1} \cdot (2^{k_n}-j))}{2^{i-k_n+1}} \\ 
&= c_{2^{k_n}-j} \, \text{ by Theorem \ref{colless_explicit}}.
\end{align*}
\end{enumerate}
\end{proof}

\setcounter{proposition}{1}
\begin{proposition}
Let $T_n^{gfb}$ be a GFB tree with $n\geq 2$ leaves and standard decomposition $T_n^{gfb}=(T_a,T_b)$. Let $n_a$ and $n_b$ denote the number of leaves of $T_a$ and $T_b$, respectively, such that without loss of generality, $n_a \geq n_b$. Let $k_n \coloneqq \lceil \log_2 (n) \rceil$, i.e. $n \in (2^{k_n-1}, 2^{k_n}]$. Then, we have:
	\begin{enumerate}
	\item If $n \in (2^{k_n-1}, 3 \cdot 2^{k_n-2})$, we have $n_a = n-2^{k_n-2}$ and $n_b = 2^{k_n-2}$. In particular, $T_b$ is the fully balanced tree of height $k_n-2$ and we have $k_{n_a} \coloneqq \lceil \log_2 (n_a) \rceil=k_n-1$. 
	\item If $n = 3 \cdot 2^{k_n-2}$, we have $n_a = 2^{k_n-1}$ and $n_b = 2^{k_n-2}$. In particular, $T_a$ is the fully balanced tree of height $k_n-1$ and $T_b$ is the fully balanced tree of height $k_n-2$.
	\item If $n \in (3 \cdot 2^{k_n-2}, 2^{k_n}]$, we have $n_a = 2^{k_n-1}$ and $n_b=n-2^{k_n-1}$. In particular, $T_a$ is the fully balanced tree of height $k_n-1$ and we have $k_{n_b} \coloneqq \lceil \log_2 (n_b) \rceil= k_n-1$.
	\end{enumerate}
\end{proposition}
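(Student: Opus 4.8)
The plan is to argue by strong induction on $n$, following directly the run of Algorithm~\ref{gfb}. The base cases $n=2,3$ are immediate: $T_2^{gfb}$ is a cherry, matching case~3 with $k_2=1$, and $T_3^{gfb}$ is a root carrying a leaf and a cherry, matching case~2 with $k_3=2$. The engine of the proof is the observation that every decision of Algorithm~\ref{gfb} depends only on the multiset of \emph{sizes} of the trees currently in $treeset$, and that any two trees of the same size that occur during the run are isomorphic --- in particular any tree of size $2^{j}$ built up by repeatedly fusing equal-sized pieces starting from leaves is $T_{j}^{fb}$ --- so it suffices to track the sequence of sizes that get merged. I would also use that the current smallest size $\mu$ is non-decreasing (the algorithm never manufactures a small tree, and whenever the unique smallest tree is merged the new minimum jumps strictly up), so the run splits into \emph{phases} of constant $\mu$, within each of which size-$\mu$ trees are fused in pairs into size-$2\mu$ trees until at most one size-$\mu$ tree (the \emph{lump}) remains, which is then fused with the smallest larger tree.

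For even $n=2m$ this gives the result quickly: the first phase fuses all $n$ leaves into $m$ cherries with no leftover, and the rest of the run is a size-scaled copy of Algorithm~\ref{gfb} on $m$ leaves, so $T_{2m}^{gfb}$ is $T_m^{gfb}$ with a cherry grafted onto each leaf; by Lemma~\ref{gfb_subtrees} its standard decomposition is $(T_{2m_a}^{gfb},T_{2m_b}^{gfb})$ where $T_m^{gfb}=(T_{m_a}^{gfb},T_{m_b}^{gfb})$. Since $k_{2m}=k_m+1$, each of the three intervals for $n$ corresponds to the matching interval for $m$, the values $(n_a,n_b)$ are exactly $(2m_a,2m_b)$ supplied by the inductive hypothesis, and a fully balanced subtree of height $k_m-2$ (resp.\ $k_m-1$) becomes one of height $k_n-2$ (resp.\ $k_n-1$); this settles all three cases for even $n$ (case~2 included, via $m=3\cdot2^{k_m-2}$).

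For odd $n\ge5$ (so $k:=k_n\ge3$) I would run the phase analysis explicitly. After the first phase $treeset$ consists of $\frac{n-3}{2}$ cherries together with one copy of $T_3^{gfb}$; in phase $\mu=2^{j}$ the size-$2^{j}$ trees fuse in pairs into copies of $T_{j+1}^{fb}$ while the lump (starting as $T_3^{gfb}$) either stays or swallows one more piece, the branching being governed by the parities of the counts of size-$2^{j}$ trees, i.e.\ by the binary digits of $n$. Carrying this through: if $n<3\cdot2^{k-2}$ (case~1) one reaches a moment where $treeset$ is $T_{k-2}^{fb}$ --- then strictly the largest tree present --- together with a configuration through which a GFB run on the remaining $n-2^{k-2}$ leaves passes, so $T_{k-2}^{fb}$ is untouched until those leaves assemble into $T_{n-2^{k-2}}^{gfb}$ (inductive hypothesis, as $n-2^{k-2}<n$), whence $T_n^{gfb}=(T_{n-2^{k-2}}^{gfb},T_{k-2}^{fb})$ with $k_{n-2^{k-2}}=k-1$, which is case~1. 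If instead $n>3\cdot2^{k-2}$ (case~3) the analysis produces two copies of $T_{k-2}^{fb}$ that fuse into $T_{k-1}^{fb}$ before it is ever merged with anything larger, while the remaining $n-2^{k-1}$ leaves form $T_{n-2^{k-1}}^{gfb}$ by the inductive hypothesis, whence $T_n^{gfb}=(T_{k-1}^{fb},T_{n-2^{k-1}}^{gfb})$ with $k_{n-2^{k-1}}=k-1$, which is case~3; and case~2 cannot occur here since $3\cdot2^{k-2}$ is even when $k\ge3$.

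The hard part will be the bookkeeping in the odd case: pinning down precisely when the fully balanced subtree that ends up at the root ($T_{k-2}^{fb}$ in case~1, $T_{k-1}^{fb}$ in case~3) is completed, verifying it is not disturbed before the complementary leaves have been assembled, and checking that the configuration on those complementary leaves really is one visited by a GFB run on them --- which is what licenses the inductive step. This needs a careful case split on the binary expansion of $n$. Everything else --- the $\lceil\log_2\rceil$ identities and the identifications of $k_{n_a}$ and $k_{n_b}$ --- is routine.
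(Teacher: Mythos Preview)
Your outline is sound, and for even $n$ it coincides with the paper's argument exactly (collapse all cherries, apply induction to $n/2$, re-expand). For odd $n$, however, the paper takes a different route that sidesteps precisely the bookkeeping you flag as hard. Rather than run the phase analysis inline to isolate the fully balanced piece and then argue that the complementary leaves assemble as an independent GFB run, the paper first proves a separate lemma (Lemma~\ref{subtree_in_common}): for odd $n$, $T_n^{gfb}$ shares a maximal pending subtree with each of $T_{n-1}^{gfb}$ and $T_{n+1}^{gfb}$. The proof of \emph{that} lemma is essentially your lump-tracking idea, but packaged so that it only has to show one subtree is common---it never needs to identify the other subtree as a GFB tree in its own right. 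With Lemma~\ref{subtree_in_common} available, the decompositions of the even neighbours $T_{n-1}^{gfb}$ (by induction on $n-1$) and $T_{n+1}^{gfb}$ (via the cherry-collapse applied to $\tfrac{n+1}{2}<n$) are already known, and comparing the two shared-subtree constraints forces $(n_a,n_b)$ for $T_n^{gfb}$ by elimination: among the four candidate splits only one is compatible with both neighbours. So you never have to verify that the complementary configuration is one a standalone GFB run would visit.

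Your direct phase analysis can certainly be completed, but doing so rigorously requires an auxiliary invariant---roughly, that every tree present during a GFB run on $m$ leaves has size at most $2^{\lceil\log_2 m\rceil-1}$---to guarantee that the completed $T_{k-2}^{fb}$ (in case~1) or $T_{k-1}^{fb}$ (in case~3) is never selected before the complementary leaves have finished assembling. The paper's organisation trades that invariant for the weaker Lemma~\ref{subtree_in_common}, which is what makes its odd case short.
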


The proof of Proposition \ref{GFB_Decomposition} requires the following lemma. 
\setcounter{lemma}{8}
\begin{lemma} \label{subtree_in_common}
Let $n \in \mathbb{N}_{\geq 3}$ and $n$ odd. Then, 
	\begin{itemize}
	\item $T_n^{gfb}$ and $T_{n-1}^{gfb}$ have a common maximal pending subtree and 
	\item $T_n^{gfb}$ and $T_{n+1}^{gfb}$ have a common maximal pending subtree.
	\end{itemize}	 
\end{lemma}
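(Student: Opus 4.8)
The plan is to argue directly from Algorithm~\ref{alg_gfb}, read as: keep a forest and repeatedly merge two trees having the minimal number of leaves. Two preliminary observations are needed. First, since the algorithm is deterministic, $T^{gfb}_m$ is the unique GFB tree on $m$ leaves, so by Lemma~\ref{gfb_subtrees} two GFB trees share a maximal pending subtree if and only if the unordered pair of leaf-counts in the standard decomposition of one has an element in common with that of the other. Second, for even $m=2q$ the tree $T^{gfb}_{2q}$ is obtained from $T^{gfb}_q$ by replacing every leaf with a cherry: run on $2q$ singletons the algorithm first builds $q$ cherries, after which it proceeds on these $q$ cherries exactly as it would on $q$ singletons, with every tree size scaled by $2$. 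In particular $T^{fb}_j$ is the cherry-blowup of $T^{fb}_{j-1}$, and the cherry-blowup is injective and carries maximal pending subtrees to maximal pending subtrees. Since for odd $n$ both $n-1$ and $n+1$ are even, it therefore suffices to (a) understand the top split of $T^{gfb}_n$, and (b) compare it, through this blowup, with the two consecutive half-sizes $\frac{n-1}{2}$ and $\frac{n+1}{2}$.

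For step~(a) I would run the algorithm on $n$ odd singletons: after $\frac{n-1}{2}$ merges the forest consists of $\frac{n-1}{2}$ cherries plus one leftover singleton, and the next merge absorbs that singleton into a cherry, creating a copy of $T^{gfb}_3$. From here I maintain the invariant that the forest consists of fully balanced trees, of at most two consecutive heights, together with exactly one further tree $D$ whose number of leaves lies strictly between two consecutive powers of two. This invariant is preserved: at every step the two smallest trees are either two fully balanced trees of equal height (merging to a fully balanced tree one height taller) or one fully balanced tree together with $D$ (merging to a new such $D$), and two fully balanced trees of different heights are never merged. Hence the tree containing the original leftover singleton is always the current $D$, every other tree built is fully balanced, and when two trees remain one of them is a fully balanced tree $T^{fb}_j$; thus $T^{gfb}_n=(D^{*},T^{fb}_j)$ with $D^{*}$ having $n-2^j$ leaves.

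For step~(b), applying the cherry-blowup it suffices to show that $T^{fb}_{j-1}$ is a maximal pending subtree of both $T^{gfb}_{(n-1)/2}$ and $T^{gfb}_{(n+1)/2}$, i.e.\ that the consecutive integers $m=\frac{n-1}{2}$ and $m+1$ have a common fully balanced maximal pending subtree of exactly height $j-1$. I would obtain this by running the whole argument as a strong induction on $n$, carrying along the strengthened statement that $T^{gfb}_r$ and $T^{gfb}_{r+1}$ share a fully balanced maximal pending subtree for every $r\ge 2$, and that the height peeled off at the top of $T^{gfb}_{2m+1}$ in step~(a) is one more than the shared height of the pair $m,m+1$. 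The base cases $n\le 3$ are checked by hand (note that for $n=3$ the two neighbours share \emph{different} maximal pending subtrees with $T^{gfb}_3$, so the inductive statement must allow this).

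The main obstacle is precisely the height-matching in step~(b): showing that the fully balanced subtree peeled off at the top of $T^{gfb}_n$ is, after undoing the cherry-blowup, the very subtree shared by $T^{gfb}_{m}$ and $T^{gfb}_{m+1}$. This comes down to tracking, level by level through the algorithm, the parities that decide whether the irregular tree $D$ swallows an extra fully balanced block; it is essentially the root-leaf-partition information recorded in Proposition~\ref{GFB_Decomposition} (into which this lemma feeds), and the invariant of the second paragraph is the device that keeps the bookkeeping finite.
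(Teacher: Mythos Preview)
Your approach differs substantially from the paper's, and while the structural observations in step~(a) are sound, the argument has a genuine gap in step~(b).

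Step~(a) is correct: the invariant (fully balanced trees of at most two consecutive heights, plus one ``irregular'' tree $D$) does hold throughout the run on odd $n$, and it does give $T^{gfb}_n=(D^*,T^{fb}_j)$ for some $j$. Your parity remark that $D^*$ has an odd number of leaves, hence cannot coincide with a maximal pending subtree of the cherry-blowups $T^{gfb}_{n\pm1}$ (all of whose subtree sizes are even), correctly forces the shared subtree, if any, to be $T^{fb}_j$.

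The gap is exactly what you flag as the height-matching: you must show that the specific $j$ produced in step~(a) is such that $T^{fb}_{j-1}$ sits at the root of both $T^{gfb}_{(n-1)/2}$ and $T^{gfb}_{(n+1)/2}$. Your proposed fix---a strong induction carrying the auxiliary clause ``the height at the top of $T^{gfb}_{2m+1}$ is one more than the shared height of the pair $m,m+1$''---is not a proof until the inductive step is actually executed, and that step is not routine: when $m$ is even you must unwind a further blowup, and the bookkeeping becomes precisely the root-partition information of Proposition~\ref{GFB_Decomposition}. Since Proposition~\ref{GFB_Decomposition} is proved \emph{using} the present lemma, you would have to establish both in one joint induction. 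That is feasible, but you have not carried it out, and it is considerably more work than the lemma alone warrants.

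The paper avoids all of this. Rather than computing the root partitions of $T^{gfb}_n$, $T^{gfb}_{n-1}$, $T^{gfb}_{n+1}$ separately and then matching heights, it runs the algorithm on $n-1$ (respectively $n+1$) and on $n$ \emph{in parallel} and tracks a single distinguished leaf $u$ (respectively cherry). The key observation is that in the run on $n$ the unique odd-sized tree behaves exactly like a tree of size one less (clustered last among equals) or one more (clustered first among equals); hence replacing $u$ by a cherry in $T^{gfb}_{n-1}$ yields $T^{gfb}_n$, so the maximal pending subtree not containing $u$ is common to both. No induction on $n$, no height computation, and no appeal to the root partition are needed.
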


\begin{proof}
Let $n \in \mathbb{N}_{\geq 3}$ and $n$ odd. 
As $n$ is odd, the first $\frac{n-1}{2}$ iterations of the while-loop in Algorithm \ref{alg_gfb} result in $\frac{n-1}{2}$ trees of size 2 and one tree of size 1, which in the $\frac{n+1}{2}^{\text{th}}$ iteration is clustered with a tree of size 2 to form a tree of size 3. Note that as the algorithm continues clustering trees, in each iteration there will be precisely one tree $T_i^{odd}$ with an odd number $s(i)$ of leaves, while all others have an even number of leaves. However, note that this unique tree with $s(i)$ leaves is treated by the algorithm like a tree with $s(i)-1$ leaves, except that it is clustered as late as possible, i.e. when all other elements in $treeset$ with $s(i)-1$ leaves (if there are any) have already been clustered. On the other hand, however, this tree is treated by the algorithm like a tree with $s(i)+1$ leaves,  except that it is clustered as early as possible, i.e. before any other elements in $treeset$ with $s(i)+1$ leaves (if there are any) get clustered.

To summarize, after the first $\frac{n+1}{2}$ iterations of the while-loop, $treeset$ contains a unique tree $T_i^{odd}$ with an odd number $s(i)$ of leaves, which at the same time
\begin{enumerate}
\item [(i)] is treated like a tree with $s(i)-1$ leaves, but is clustered as late as possible;
\item [(ii)] is treated like a tree with $s(i)+1$ leaves, but is clustered as soon as possible.
\end{enumerate}

Now, first consider Algorithm \ref{gfb} for $n-1$, which is an even number.
After the first $\frac{n-3}{2}$ iterations of the while-loop, $treeset$ contains $\frac{n-3}{2}$ trees of size 2 and two trees of size 1, which are clustered last to form the last cherry. We keep tracking one leaf $u$ of this cherry throughout the algorithm. The algorithm at this stage contains only cherries, which are all isomorphic, so without loss of generality, we may assume that $u$ is contained in the one that gets clustered with another tree last, i.e. after all other cherries have been clustered. We continue like this, always assuming without loss of generality (when there is more than one tree in $treeset$ of the same size as the tree that contains $u$) that $u$ is in the last one to be clustered. By (i), this means that if we replace $u$ in $T_{n-1}^{gfb}$ by a cherry, we derive $T_{n}^{gfb}$. This is due to the fact that in the analogous step where $treeset$ for $n-1$ only contains cherries, $treeset$ for $n$ will contain only cherries and a tree containing three leaves. This triplet will subsequently act like a cherry, but like the one that happens to be clustered last. So we identify the cherry of the triplet with leaf $u$ of this last cherry to see the correspondence between $T_{n-1}^{gfb}$ and $T_{n}^{gfb}$. Note that this also directly implies that $T_{n-1}^{gfb}$ and $T_{n}^{gfb}$ share a common maximal pending subtree -- namely the one that does {\em not} contain $u$.

Note that by (ii), an analogous procedure for $n+1$ leads to $T_{n+1}^{gfb}$ and $T_{n}^{gfb}$ sharing a common maximal pending subtree. In this case, we track a cherry in $T_{n+1}^{gfb}$, namely the one that happens to be clustered first, and replace it by a single leaf to see the correspondence between $T_{n+1}^{gfb}$ and $T_{n}^{gfb}$. 

So $T_{n}^{gfb}$ shares a common maximal pending subtree with both $T_{n-1}^{gfb}$ and $T_{n+1}^{gfb}$, respectively. This completes the proof.
\end{proof}

Note that the main idea of above proof is illustrated in Figure \ref{Fig_treeset}.

\begin{sidewaysfigure}
	\centering
	\includegraphics[scale=0.1]{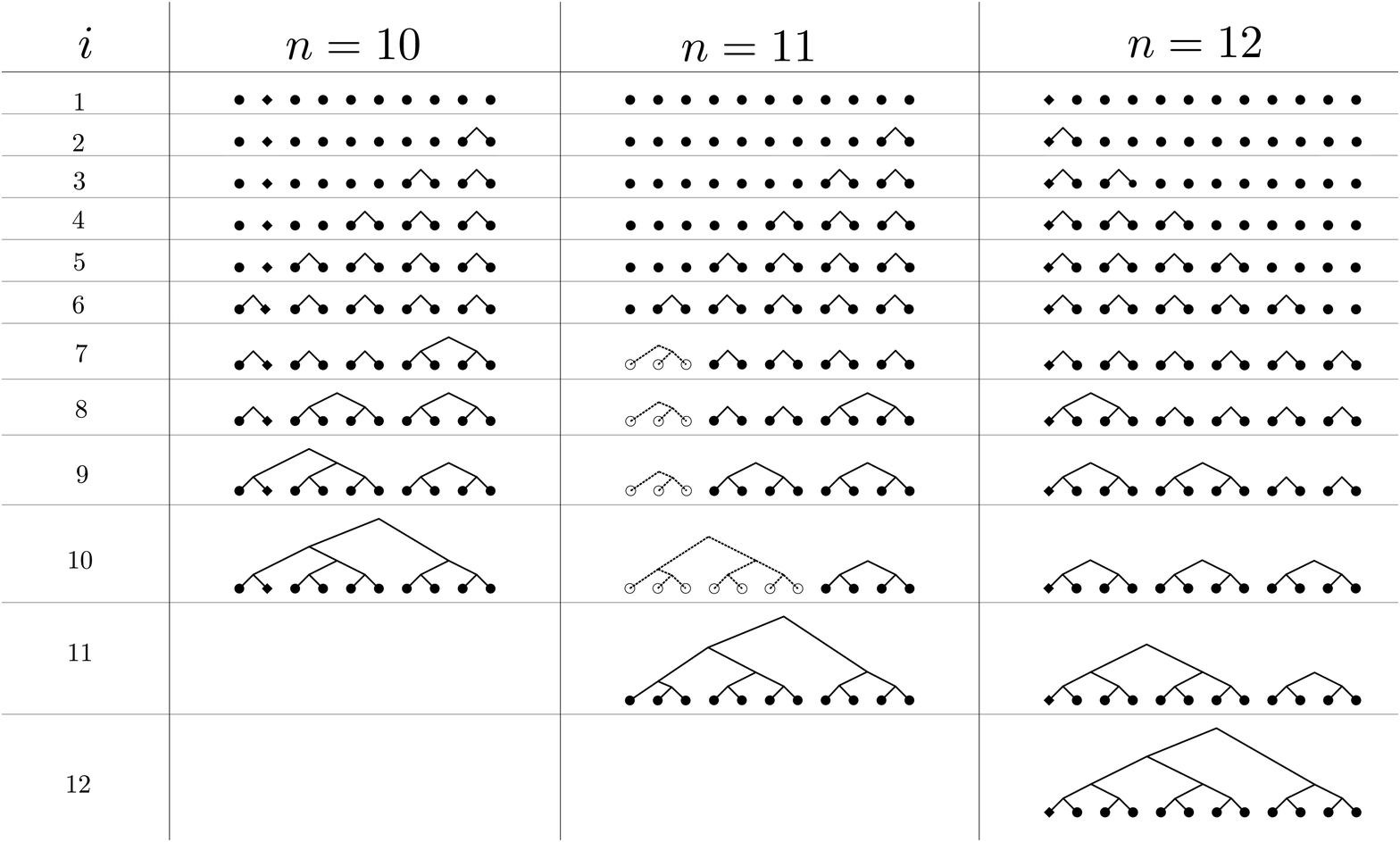}
	\caption{Content of $treeset$ before the $i^{\rm th}$ iteration of the while-loop in Algorithm \ref{alg_gfb} for $n=10, n=11$ and $n=12$. In case of $n=11$, the tree depicted in dashed lines for $i=7, \ldots, 10$, depicts the \emph{unique} tree in $treeset$ with an odd number of leaves. For $n=10$, the leaf depicted as a diamond represents leaf $u$ used in the proof of Lemma \ref{subtree_in_common}. Note that the tree containing this leaf is always clustered as late as possible. In case of $n=12$, the leaf depicted as a diamond again represents leaf $u$ used in the proof of Lemma \ref{subtree_in_common}. In this case, the tree containing this leaf is always clustered as soon as possible. The last tree depicted in each column represents the GFB tree. Note that $T_n^{gfb}$ can be obtained from $T_{n-1}^{gfb}$ by replacing the leaf depicted as a diamond by a cherry. Moreover, $T_n^{gfb}$ can be obtained from $T_{n+1}^{gfb}$ by replacing the cherry containing the diamond leaf by a single leaf. }
	\label{Fig_treeset}
\end{sidewaysfigure}

\begin{proof}[Proof of Proposition \ref{GFB_Decomposition}]
Our proof strategy is as follows: In order to simplify the proof, instead of analyzing all three cases separately, we investigate only two cases, which correspond to the first and the last case but -- by adding the respective interval bound -- directly imply the second case. In particular, we inductively prove the following statements: 
\begin{enumerate} [(i)]	
\item  \label{inew} If $n \in (2^{k_n-1}, 3 \cdot 2^{k_n-2}]$, we have $n_a = n-2^{k_n-2}$ and $n_b = 2^{k_n-2}$. In particular, $T_b$ is the fully balanced tree of height $k_n-2$ and we have $k_{n_a} \coloneqq \lceil \log_2 (n_a) \rceil=k_n-1$. 
\item   \label{iinew} If $n \in [3 \cdot 2^{k_n-2}, 2^{k_n}]$, we have $n_a = 2^{k_n-1}$ and $n_b=n-2^{k_n-1}$. In particular, $T_a$ is the fully balanced tree of height $k_n-1$ and we have $k_{n_b} \coloneqq \lceil \log_2 (n_b) \rceil \geq k_n-2$, where equality holds precisely if $n=3 \cdot 2^{k_n-2}$.
\end{enumerate}

$n=2$ is the base case for Case \eqref{iinew}. In this case, we have $k_n=1$, as we have $n=2^1=2^{k_n}$. Applying Algorithm \ref{gfb} to 2 leaves results in a cherry. Thus, $n_a=1=2^{1-1}=2^{k_n-1}$ and $n_b = 1 = 2-2^{1-1}=n-2^{k_n-1}$. In particular, $T_a$ is a fully balanced trees of height $k_n-1=0$. Moreover, as $n_b=1$, $T_b$ is also a fully balanced tree of height $k_n-1=0$, and thus in particular $k_{n_b}=k_n-1$. \\
$n=3$ is the base case for Case \eqref{inew} (note that it is at the same time an example of Case \eqref{iinew} as we have $n=3 \cdot 2^{2-2}=3 \cdot 2^{k_n-2}$). In this case, Algorithm \ref{gfb} returns a so-called triplet, i.e. a tree $T=(T_a,T_b)$, where $T_a$ consists of two leaves forming a cherry and $T_b$ consists only of one leaf. Thus, $n_a=2=2^{2-1}=2^{k_n-1}$ and $n_b = 1 = 3-2^{2-1}=n-2^{k_n-1}$. In particular, $T_a$ is a fully balanced tree of height $k_n-1=1$. Moreover, as $n_b=1$, $T_b$ is a fully balanced tree of height $k_n-2=0$, and thus in particular $k_{n_b}=k_n-2$. (Note that this case also shows how Cases \eqref{inew} and \eqref{iinew} together imply statement 2. of the proposition.)\\

Now, let $n \geq 4$ and assume that \eqref{inew} and \eqref{iinew} hold for up to $n-1$ leaves. We now consider $n$ leaves, where we distinguish two cases:
\begin{itemize}
\item $n$ is an even number: \\
If $n$ is even, Algorithm \ref{alg_gfb} results in a tree $T^{gfb}_n$ with $\frac{n}{2}$ cherries (because in each of the first $\frac{n}{2}$ iterations of the while-loop two trees of size 1 are merged into a cherry). We now consider the tree $T'$ with $n'=\frac{n}{2}$ leaves that is obtained from $T^{gfb}_n$ by replacing all cherries with single leaves. 
Note that as $T^{gfb}_n$ is a GFB tree, so is $T'$ (because as soon as Algorithm \ref{alg_gfb} only has cherries to choose from, they are treated like leaves). Moreover, as $n'<n$, we can use the inductive hypothesis to infer the sizes $n_a'$ and $n_b'$ of the two maximal pending subtrees of $T'$. Exemplarily, consider Case \eqref{inew}, i.e. $n \in (2^{k_n-1}, 3 \cdot 2^{k_n-2}]$, i.e. $n' \in (2^{k_n-2}, 3 \cdot 2^{k_n-3}]$: \\
By the inductive hypothesis, $n_a' = n'-2^{k_n-3} = \frac{n}{2} - 2^{k_n-3}$ and $n_b' = 2^{k_n-3}$. In particular, $T_b'$ is the fully balanced tree of height $k_n-3$ and we have for $T_a'$: $k_{n_a'} = \lceil \log_2(n_a') \rceil = k_n-2$. We now go back from $T'$ to $T^{gfb}_n = (T_a, T_b)$ by replacing all leaves of $T'$ with cherries. This implies
\begin{align*}
n_a &= 2 \cdot n_a' = 2 \cdot \left( \frac{n}{2} - 2^{k_n-3} \right) = n - 2^{k_n-2}, \\
n_b &= 2 \cdot n_b' = 2 \cdot 2^{k_n-3} = 2^{k_n-2}.
\end{align*}
In particular, $T_b$ is the fully balanced tree of height $k_n-2$ (because replacing all leaves of a fully balanced tree of height $k_n-3$ with cherries results in a fully balanced tree of height $k_n-2$). Moreover, as $n_a = 2 \cdot n'_a$ and $k_{n'_a} = k_n-2$, we can conclude that
		\begin{align*}
		k_{n_a} &= \lceil \log_2(n_a) \rceil
		= \lceil \log_2(2 \cdot n'_a) \rceil
		= 1 + \lceil \log_2(n'_a) \rceil
		= k_n - 1.
		\end{align*}

This completes the proof for the case that $n$ is even and contained in $ (2^{k_n-1}, 3 \cdot 2^{k_n-2}]$. The case where $n$ is even and contained in $ [3 \cdot 2^{k_n-2}, 2^{k_n}]$ follows analogously: Here, we derive $n_a=2^{k_n-1} $and thus $n_b=n-2^{k_n-1}$. Therefore, $T_a$ is the fully balanced tree of height $k_n-1$. However, for $T_b$, we have to consider the case $n=3 \cdot 2^{k_n-2}$ separately. If $n>3 \cdot 2^{k_n-2}$, we have $n_b=n-2^{k_n-1}>2^{k_n-2}$. Therefore, $k_{n_b}=k_n-1$ (note that $n_b \leq n_a$ and thus $k_{n_b}\leq k_{n_a}=k_n-1$). However, if $n=3 \cdot 2^{k_n-2}$, we have $n_b=2^{k_n-2}$ and thus $k_{n_b}= k_n-2$. This completes the proof for even values of $n$. 

\item $n$ is an odd number: \\
If $n$ is odd, $n-1$ and $n+1$ are even, and as $n-1< n$ and $\frac{n+1}{2} < n$ as $n \geq 2$ by assumption, we can use an inductive argument to infer the leaf partitioning for $T^1 \coloneqq T_{n-1}^{gfb}$ and $T^2\coloneqq T_{n+1}^{gfb}$ (for $T^2$, we will use the fact that that $T^2=T_{n+1}^{gfb}$ contains $\frac{n+1}{2}$ cherries, apply the inductive hypothesis to a tree $\tilde{T}^2$ obtained from $T^2$ by replacing all cherries with single leaves and go back to $T^2$). In the following, let $T^1=(T^1_a, T^1_b)$ and $T^2=(T^2_a, T^2_b)$ denote the standard decompositions of $T^1$ and $T^2$, respectively.
Moreover, as $n$ is odd, in particular, as $n \geq 2$, $n \neq 2^{k_n-1}$, $n\neq 3 \cdot 2^{k_n-2}$ and $n \neq 2^{k_n}$ for all $k_n \in \mathbb{N}$. This implies that $n-1$, $n$ and $n+1$ are all contained together in the same interval, i.e. either all of them are in $[2^{k_n-1}, 3 \cdot 2^{k_n-2}]$ or all of them are in $[3 \cdot 2^{k_n-1}, 2^{k_n}]$.
We now distinguish between these two cases:
\begin{enumerate}
	\item $ n-1$, $n$, $n+1 \in [2^{k_n-1}, 3 \cdot 2^{k_n-2}]$:
		\begin{itemize}
		\item 
	First, suppose that $n-1$, $n$ and $ n+1 \in (2^{k_n-1}, 3 \cdot 2^{k_n-2}]$, i.e. that $n-1 > 2^{k_n-1}$.
		As $n-1 < n$, by the inductive hypothesis, we have for $T^1$:
		\begin{align*}
		n_a^1 = n  - 1 - 2^{k_{n-1}-2} \text{ and } n_b^1 = 2^{k_{n-1}-2},
		\end{align*}
		where $k_{n-1} \coloneqq \lceil \log_2 (n-1)\rceil$. Note that $k_{n-1} = k_n$ as $n-1 > 2^{k_n-1}$. This implies: 
		\begin{align*}
		n_a^1 = n  - 1 - 2^{k_{n}-2} \text{ and } n_b^1 = 2^{k_{n}-2},
		\end{align*}
and thus, in particular, $T_b^1$ is the fully balanced tree of height $k_{n}-2$ by the inductive hypothesis.
		Moreover, for $T^2$, we can use the same inductive argument as in the case $n$ even (i.e. we use the fact that $T^2=T_{n+1}^{gfb}$ contains $\frac{n+1}{2}$ cherries, apply the inductive hypothesis to a tree $\tilde{T}^2$ obtained from $T^2$ by replacing all cherries with single leaves and go back to $T^2$) to conclude that:
		\begin{align*}
		n_a^2 = n + 1 - 2^{k_n-2} \text{ and } n_b^2 = 2^{k_n-2},
		\end{align*}
		where $T_b^2$ is the fully balanced tree of height $k_n-2$ by the inductive hypothesis.
		
	Now, by Lemma \ref{subtree_in_common}, $T_n^{gfb}$ shares a common subtree with $T^1$, but it has one more leaf than $T^1$, so we can conclude that one of the following two cases must hold:
	\begin{align}
	n_a &= n_a^1 + 1 = n - 2^{k_n-2} \text{ and } n_b = n_b^1 = 2^{k_n-2} \label{var11} \text{ or } \\
	n_a &= n_a^1 = n - 1 - 2^{k_n-2} \text{ and } n_b = n_b^1 + 1 = 2^{k_n-2} + 1. \label{var12}
	\end{align}
	On the other hand, as $T_n^{gfb}$ by Lemma \ref{subtree_in_common} also shares a common subtree with $T^2$, but has one leaf less, we can conclude that one of the following two cases must hold:
	\begin{align}
	n_a &= n_a^2 - 1 = n - 2^{k_n-2} \text{ and } n_b = n_b^2 = 2^{k_n-2} \label{var21} \text{ or } \\
	n_a &= n_a^2 = n + 1 - 2^{k_n-2} \text { and } n_b = n_b^2 - 1 = 2^{k_n-2}-1. \label{var22}
	\end{align}
	As both one of Eq. \eqref{var11} and \eqref{var12} as well as one of Eq. \eqref{var21} and \eqref{var22} have to hold, we can conclude that Eq. \eqref{var11} = Eq. \eqref{var21} holds, as all other combinations are mutually exclusive. In particular, the subtree $T_b$ of $T_n^{gfb}$ is a fully balanced tree of height $k_n-2$ (it is the maximal pending subtree that $T_n^{gfb}$ shares with both $T^1$ and $T^2$). Moreover, as $n \in (2^{k_n-1}, 3 \cdot 2^{k_n-2})$, we have that $n_a = n - 2^{k_n-2} \in (2^{k_n-2}, 2^{k_n-1})$. In particular, $k_{n_a} = \lceil \log_2(n_a) \rceil = k_n-1$.
	
	\item Now, if $n-1 = 2^{k_n-1}$, by the inductive hypothesis, we have for $T^1 = T_{n-1}^{gfb}$:
		\begin{align*}
		n_a^1 &= 2^{k_n-2}, \\ 
		n_b^1 &= (n-1) - 2^{k_n-2} = 2^{k_n-1} - 2^{k_n-2} = 2^{k_n-2},
		\end{align*}
		where both $T_a^1$ and $T_a^2$ are fully balanced trees of height $k_n-2$ by the inductive hypothesis.
		
		For $T^2 = T_{n+1}^{gfb}$, we use the same inductive argument based on $\frac{n+1}{2}$ as above to conclude that:
		\begin{align*}
		n_a^2 &= (n+1) - 2^{k_n-2} = 2^{k_n-1} + 2 - 2^{k_n-2} = 2^{k_n-2} + 2, \\
		n_b^2 &= 2^{k_n-2}.
		\end{align*}
		In particular, $T_b^2$ is a fully balanced tree of height $k_n-2$ by the inductive hypothesis.
		
	Now, just as in the previous case, we exploit Lemma \ref{subtree_in_common} to conclude that $n_a = n_a^1 + 1 = n_a^2-1 = 2^{k_n-2} + 1$ and $n_b = n_b^2 = 2^{k_n-2} $. 

	In particular, subtree $T_b$ of $T_n^{gfb}$ is a fully balanced tree of height $k_n-2$ (it is the maximal pending subtree $T_n^{gfb}$ shares with both $T^1$ and $T^2$). Moreover, as $n \in (2^{k_n-1}, 3 \cdot 2^{k_n-2})$, we have that $n_a = n - 2^{k_n-2} \in (2^{k_n-2}, 2^{k_n-1})$. In particular, $k_{n_a} = \lceil \log_2(n_a) \rceil = k_n-1$. 
\end{itemize}	

	\item $n-1$, $n$ and $n+1 \in [3 \cdot 2^{k_n-2}, 2^{k_n}]$:
	By the same inductive argument as above, we can derive the following leaf partitioning for tree $T^1=T_{n-1}^{gfb}$ and tree $T^2=T_{n+1}^{gfb}$:
	\begin{align*}
	n_a^1 = 2^{k_n-1} \text{ and } n_b^1 = n - 1 - 2^{k_n-1}, \\
	n_a^2 = 2^{k_n-1} \text{ and } n_b^2 = n + 1 - 2^{k_n-1},
	\end{align*}
	where $T_a^1=T_a^2$ is the fully balanced tree of height $k_n-1$ by the inductive hypothesis and we have  $k_{n_b^1} = k_{n_b^2} = k_n-1$.
	Now, we again exploit Lemma \ref{subtree_in_common}, i.e. the fact that $T_n^{gfb}$ shares a common subtree with $T^1$ and a common subtree with $T^2$, to conclude that  the subtree $T_a$ of $T_n^{gfb}$ equals $T_a^1 = T_a^2$, and thus, as it has $ 2^{k_n-1} $ leaves, it is a fully balanced tree of height $k_n-1$. Moreover, as $n \in (3 \cdot 2^{k_n-2}, 2^{k_n})$, we have that $n_b = n - 2^{k_n-1} \in (2^{k_n-2}, 2^{k_n-1})$. In particular, $k_{n_a} = \lceil \log_2(n_a) \rceil = k_n-1$. This completes the proof.
\end{enumerate}
\end{itemize}
\end{proof}

\setcounter{lemma}{6}
\begin{lemma}
Let $s(x) = \min\limits_{z \in \mathbb{Z}}\vert  x-z \vert$, i.e. $s(x)$ is the distance from $x$ to the nearest integer. Let $n \in \mathbb{N}$ and let $k_n \coloneqq \lceil \log_2(n) \rceil$. Then,
	\begin{enumerate}
	\item For $n \in (2^{k_n-1}, 3 \cdot 2^{k_n-2}]$, we have
		\begin{align*}
		\frac{s(n \cdot 2^{1-k_n})}{2^{1-k_n}} &= n - 2^{k_n-1}.
		\end{align*}				
	\item For $n \in (3 \cdot 2^{k_n-2}, 2^{k_n}]$, we have
		\begin{align*}
		\frac{s(n \cdot 2^{1-k_n})}{2^{1-k_n}} &= 2^{k_n}-n.
		\end{align*}
	\end{enumerate}
\end{lemma}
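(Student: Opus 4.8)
The key observation is that the argument of $s$ lies in a very small range. Since $k_n = \lceil \log_2(n)\rceil$, we have $n \in (2^{k_n-1}, 2^{k_n}]$, and multiplying through by $2^{1-k_n}$ gives $n \cdot 2^{1-k_n} \in (1, 2]$. So the only integers that can be nearest to $x \coloneqq n \cdot 2^{1-k_n}$ are $1$ and $2$, and consequently $s(x) = \min\{x-1,\ 2-x\}$. The first step of the proof is thus simply to record these two facts ($x \in (1,2]$ and the resulting simplification of $s$).

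Next I would identify which of $x-1$ and $2-x$ is the minimum. We have $x - 1 \le 2 - x$ precisely when $x \le \tfrac{3}{2}$, i.e.\ when $n \cdot 2^{1-k_n} \le \tfrac{3}{2}$, which rearranges to $n \le 3 \cdot 2^{k_n-2}$. Hence for $n \in (2^{k_n-1}, 3\cdot 2^{k_n-2}]$ we get $s(x) = x - 1 = n\cdot 2^{1-k_n} - 1$, and for $n \in (3\cdot 2^{k_n-2}, 2^{k_n}]$ we get $s(x) = 2 - x = 2 - n\cdot 2^{1-k_n}$. (At the common endpoint $n = 3\cdot 2^{k_n-2}$ both expressions equal $\tfrac12$, so there is no inconsistency in assigning that value to the first case.)

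Finally I would divide by $2^{1-k_n}$, i.e.\ multiply by $2^{k_n-1}$. In the first case this yields
\[
\frac{s(n\cdot 2^{1-k_n})}{2^{1-k_n}} = 2^{k_n-1}\bigl(n\cdot 2^{1-k_n} - 1\bigr) = n - 2^{k_n-1},
\]
and in the second case
\[
\frac{s(n\cdot 2^{1-k_n})}{2^{1-k_n}} = 2^{k_n-1}\bigl(2 - n\cdot 2^{1-k_n}\bigr) = 2^{k_n} - n,
\]
as claimed.

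The proof is entirely elementary; there is no real obstacle beyond being careful about the half-open intervals and the shared boundary point $n = 3\cdot 2^{k_n-2}$, and about the edge case $n = 2^{k_n}$ (where $x = 2$ exactly, $s(x) = 0$, and indeed $2^{k_n} - n = 0$), so that the stated ranges of $n$ are covered without gaps or overlaps causing contradictions.
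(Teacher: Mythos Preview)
Your proof is correct and follows essentially the same approach as the paper: both arguments observe that $x = n\cdot 2^{1-k_n}$ lies in $(1,2]$, determine whether the nearest integer is $1$ or $2$ according to whether $x \le \tfrac{3}{2}$ (equivalently $n \le 3\cdot 2^{k_n-2}$), and then divide out $2^{1-k_n}$. The only cosmetic difference is that the paper first records that $f(n)=n\cdot 2^{1-k_n}$ is strictly increasing on $(2^{k_n-1},2^{k_n}]$ and evaluates it at the interval endpoints, whereas you simply scale the inequalities directly.
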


\begin{proof}
Let $n \in \mathbb{N}$ and $f(n) \coloneqq n \cdot 2^{1-k_n}$, where $k_n \coloneqq \lceil \log_2(n) \rceil$. We first need to show that $f(n)$ is strictly monotonically increasing for $n \in (2^{k_n-1}, 2^{k_n}]$. Let $n_1, n_2 \in (2^{k_n-1}, 2^{k_n}]$, such that $n_1 < n_2$. Let $k_{n_1} \coloneqq \lceil \log_2(n_1) \rceil$ and $k_{n_2} \coloneqq \lceil \log_2(n_2) \rceil$. As $n_1, n_2 \in (2^{k_n-1}, 2^{k_n}]$, we have $k_{n_1} = k_{n_2} = k_n$. In order to show that $f(n)$ is strictly monotonically increasing, we need to show that $f(n_1) < f(n_2)$:
\begin{align*}
		f(n_1) < f(n_2) &\Leftrightarrow f(n_2) - f(n_1) > 0 \\
		&\Leftrightarrow n_2 \cdot 2^{1-k_{n_2}} - n_1 \cdot 2^{1-k_{n_1}}  > 0 \\
		&\Leftrightarrow n_2 \cdot 2^{1-k_n} - n_1 \cdot 2^{1-k_n} > 0 \\
		&\Leftrightarrow (n_2-n_1) \cdot \underbrace{2^{1-k_n}}_{> 0} > 0 \\
		&\Leftrightarrow n_1 < n_2.
\end{align*}
Thus, $f(n)$ is strictly monotonically increasing for $n \in (2^{k_n-1},2^{k_n}]$.
\begin{enumerate}
\item Let $n \in (2^{k_n-1},3 \cdot 2^{k_n-2}]$. Thus, $\lceil \log_2(n) \rceil = k_n$. In particular, $\lceil \log_2(2^{k_n-1}+1) \rceil = k_n$ and $\lceil \log_2(3 \cdot 2^{k_n-2}) \rceil = k_n$.
As $f(n)$ is strictly monotonically increasing for $n \in (2^{k_n-1},2^{k_n}]$ and
\begin{align*}
	f(2^{k_n-1}+1) &= (2^{k_n-1}+1) \cdot 2^{1- \lceil \log_2(2^{k_n-1}+1) \rceil} = (2^{k_n-1}+1) \cdot 2^{1-k_n} \\ 
	&= 1 + 2^{1-k_n} \geq 1 \text{ and } \\
	f(3 \cdot 2^{k_n-2}) &= (3 \cdot 2^{k_n-2}) \cdot  2^{1 - \lceil \log_2(3 \cdot 2^{k_n-2}) \rceil} =   (3 \cdot 2^{k_n-2}) \cdot 2^{1-k_n} = \frac{3}{2},
\end{align*}
we have that $n \cdot 2^{1-k_n} \in [1, \frac{3}{2}]$.
In particular, $s(n \cdot 2^{1-k_n})$ is the distance from $n \cdot 2^{1-k_n}$ to 1, i.e. $s(n \cdot 2^{1-k_n}) = n \cdot 2^{1-k_n} - 1$.
This implies
\begin{align*}
	\frac{s(n \cdot 2^{1-k_n})}{2^{1-k_n}} &= n - 2^{k_n-1} 
\end{align*}
as claimed.
\item Let $n \in (3 \cdot 2^{k_n-2}, 2^{k_n}]$. Thus,  $\lceil \log_2(n) \rceil = k_n$. In particular, $\lceil \log_2(3 \cdot 2^{k_n-2} + 1) \rceil = k_n$ and $\lceil \log_2(2^{k_n}) \rceil = k_n$. 
As $f(n)$ is strictly monotonically increasing for $n \in (2^{k_n-1},2^{k_n}]$ and
\begin{align*}
f(3 \cdot 2^{k_n-2}+1) &= (3 \cdot 2^{k_n-2}+1) \cdot 2^{1-\lceil \log_2(3 \cdot 2^{k_n-2} + 1) \rceil} =(3 \cdot 2^{k_n-2}+1) \cdot 2^{1-k_n} \\
 &= \frac{3}{2} + 2^{1-k_n} \geq \frac{3}{2} \text{ and } \\
	f(2^{k_n}) &= 2^{k_n} \cdot 2^{1-\lceil \log_2(2^{k_n}) \rceil}  = 2^{k_n} \cdot 2^{1-k_n} = 2,
\end{align*}
we have that $n \cdot 2^{1-k_n} \in [\frac{3}{2},2]$. In particular, $s(n \cdot 2^{1-k_n})$ is the distance from $2$ to $n \cdot 2^{1-k_n}$, i.e. $s(n \cdot 2^{1-k_n}) = 2 - n \cdot 2^{1-k_n}$. This implies
\begin{align*}
\frac{s(n \cdot 2^{1-k_n})}{2^{1-k_n}} &= 2^{k_n}-n,
\end{align*}
as claimed. 
\end{enumerate}
This completes the proof.
\end{proof}

\setcounter{theorem}{4}
\begin{theorem} 
Let $T_n^{gfb}$ be a GFB tree with $n$ leaves. Then, $T_n^{gfb}$ has minimal Colless index, i.e. $\mathcal{C}(T_n^{gfb}) = c_n$.
\end{theorem}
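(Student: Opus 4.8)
The plan is to prove the statement by induction on $n$, following the same scheme used in the main text. The base case $n=1$ is immediate, since the only rooted binary tree on one leaf satisfies $\mathcal{C}(T_1^{gfb})=0=c_1$. For the inductive step I would assume the claim for all GFB trees on fewer than $n$ leaves, take the standard decomposition $T_n^{gfb}=(T_a,T_b)$, and invoke Proposition \ref{GFB_Decomposition}, which pins down $(n_a,n_b)$ exactly and splits the analysis into the three regimes $n=3\cdot 2^{k_n-2}$, $n\in(2^{k_n-1},3\cdot 2^{k_n-2})$, and $n\in(3\cdot 2^{k_n-2},2^{k_n}]$. In each regime one maximal pending subtree is a fully balanced tree $T_k^{fb}$ (Colless index $0$ by Theorem \ref{min_colless}) and the other is a smaller GFB tree (by Lemma \ref{gfb_subtrees}), so Lemma \ref{colless_sum} reduces $\mathcal{C}(T_n^{gfb})$ to $c_{n'}+(n_a-n_b)$ for a suitable $n'<n$, to which the inductive hypothesis and the explicit formula of Theorem \ref{colless_explicit} apply. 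The first two regimes are carried out in the main text; the work that remains is the third.

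For $n\in(3\cdot 2^{k_n-2},2^{k_n}]$, Proposition \ref{GFB_Decomposition} gives $n_a=2^{k_n-1}$ with $T_a=T_{k_n-1}^{fb}$ and $n_b=n-2^{k_n-1}$ with $\lceil\log_2(n_b)\rceil=k_n-1$. Then, using Lemma \ref{colless_sum}, Lemma \ref{gfb_subtrees}, the inductive hypothesis, and Theorem \ref{colless_explicit},
\begin{equation*}
\mathcal{C}(T_n^{gfb}) = c_{n_b} + 2^{k_n}-n = \sum_{i=0}^{k_n-3}\frac{s(2^{i-k_n+2}\cdot(n-2^{k_n-1}))}{2^{i-k_n+2}} + 2^{k_n}-n.
\end{equation*}
Since $2^{i-k_n+2}\cdot 2^{k_n-1}=2^{i+1}\in\mathbb{Z}$, Part 2 of Lemma \ref{subbaditivity_of_s} strips the $2^{k_n-1}$-term out of each $s(\cdot)$, and after re-indexing $i\mapsto i+1$ the sum becomes $\sum_{j=1}^{k_n-2} f_j(n)$ with $f_j(n)=s(2^{j-k_n+1}n)/2^{j-k_n+1}$. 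Finally, Part 2 of Lemma \ref{gfb_technical_lemma} identifies the remaining term $2^{k_n}-n$ with $f_0(n)$, so $\mathcal{C}(T_n^{gfb})=\sum_{j=0}^{k_n-2} f_j(n)=c_n$ by Theorem \ref{colless_explicit}, which closes the induction.

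The routine but error-prone part is the bookkeeping of index ranges and exponents in these sums --- tracking $k_{n_b}=k_n-1$, distinguishing $2^{i-k_n+2}$ from $2^{i-k_n+1}$, and making the term count match after the shift --- and I would double-check it against small values of $n$. The more delicate point is consistency at the interval boundaries: one must verify that the value $2^{k_n-2}$ obtained for $n=3\cdot 2^{k_n-2}$ matches the limits of both neighbouring regimes, and that the endpoint $n=2^{k_n}$ (where $n_b=2^{k_n-1}$, so $T_b$ is itself fully balanced and $\mathcal{C}(T_n^{gfb})=0$) is correctly recovered from Lemma \ref{gfb_technical_lemma} via $s(2^{k_n}\cdot 2^{1-k_n})=s(2)=0=c_{2^{k_n}}$. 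I do not see a way to avoid Theorem \ref{colless_explicit} here: the GFB root split $(n_a,n_b)$ is in general not the near-balanced split appearing in the recursion of Theorem \ref{colless_minimum}, so a direct recursive argument does not go through, which is exactly why the proof routes through the explicit Takagi-type formula together with the auxiliary Lemma \ref{gfb_technical_lemma}.
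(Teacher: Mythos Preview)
Your proposal is correct and follows essentially the same route as the paper: induction on $n$, case split via Proposition~\ref{GFB_Decomposition}, reduction by Lemma~\ref{colless_sum} and Lemma~\ref{gfb_subtrees} to $c_{n'}+(n_a-n_b)$, application of Theorem~\ref{colless_explicit} and Lemma~\ref{subbaditivity_of_s} to simplify the sum, and identification of the residual term via Lemma~\ref{gfb_technical_lemma}. The only cosmetic difference is that you re-index explicitly via $j=i+1$ to exhibit $\sum_{j=1}^{k_n-2}f_j(n)$, whereas the paper writes out the first few summands and appends the $i=0$ term directly; both arrive at $c_n$ the same way.
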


\begin{proof}
Let $T_n^{gfb}$ be a GFB tree with $n$ leaves. Let $k_n \coloneqq \lceil \log_2(n) \rceil $.
In order to show that $T_n^{gfb}$ has minimal Colless index, we show that $\mathcal{C}(T_n^{gfb}) = c_n = \sum\limits_{i=0}^{k_n-2} \frac{s(2^{i-k_n+1} \cdot n)}{2^{i-k_n+1}}$. We do this by induction on $n$. 

For $n=1$ we have $\mathcal{C}( T_1^{gfb})=0=c_1$, which gives the base case of the induction.

Now, we assume that the statement given in Theorem \ref{GFB_is_minimal} holds for all GFB trees with up to $n-1$ leaves and we show that it also holds for the GFB tree with $n$ leaves. Now suppose that $n \in (3 \cdot 2^{k_n-2}, 2^{k_n}]$ (the other cases are given in the main part of this manuscript).

By Proposition \ref{GFB_Decomposition} we know that $T_n^{gfb}$ has the following standard decomposition $T_n^{gfb}=(T_{k_n-1}^{fb},T_{n-2^{k_n-1}})$. In particular, $T_{k_n-1}^{fb}$ is the fully balanced tree of height $k_n-1$, and thus by Theorem \ref{min_colless}, $\mathcal{C}(T_{k_n-1}^{fb})=0$. Moreover, for $T_{n-2^{k_n-1}}$ we have: $\lceil \log_2(n-2^{k_n-1}) \rceil = k_n-1$. Thus, using Lemma \ref{colless_sum} and Lemma \ref{max_subtrees}, we have
	\begin{align*}
		\mathcal{C}(T_n^{gfb}) &= \underbrace{\mathcal{C}(T_{k_n-1}^{fb})}_{=0}+ \mathcal{C}(T_{n-2^{k_n-1}}) +  2^{k_n-1} - (n - 2^{k_n-1})  \\
		&= \mathcal{C}(T_{n-2^{k_n-1}}^{gfb}) + 2^{k_n}-n \text{ by Lemma \ref{gfb_subtrees}} \\
		&= c_{n-2^{k_n-1}} + 2^{k_n}-n \; \text{ by the inductive hypothesis} \\
		&= \sum\limits_{i=0}^{(k_n-1)-2} \frac{s(2^{i-(k_n-1)+1} \cdot (n - 2^{k_n-1}))}{2^{i-(k_n-1)+1}} + 2^{k_n}-n \; \text{ by Theorem \ref{colless_explicit}}\\
		&= \sum\limits_{i=0}^{k_n-3} \frac{s(2^{i-k_n+2} \cdot (n - 2^{k_n-1}))}{2^{i-k_n+2}} + 2^{k_n}-n \\
		&= \sum\limits_{i=0}^{k_n-3} \frac{s(2^{i-k_n+2} \cdot n - 2^{i+1})}{2^{i-k_n+2}}  + 2^{k_n}-n \\ 
		&= \sum\limits_{i=0}^{k_n-3} \frac{s(2^{i-k_n+2} \cdot n )}{2^{i-k_n+2}}  + 2^{k_n}-n \; \text{ by Lemma \ref{subbaditivity_of_s}, Part 2, as $2^{i+1} \in \mathbb{Z}$} \\ 
		&= \frac{s(2^{2-k_n} \cdot n)}{2^{2-k_n}} + \frac{s(2^{3-k_n} \cdot n)}{2^{3-k_n}} + \ldots + \frac{s(2^{-1} \cdot n)}{2^{-1}} + 2^{k_n}-n \\
		&=  \frac{s(2^{1-k_n} \cdot n)}{2^{1-k_n}} + \frac{s(2^{2-k_n} \cdot n)}{2^{2-k_n}} + \frac{s(2^{3-k_n} \cdot n)}{2^{3-k_n}} + \ldots + \frac{s(2^{-1} \cdot n)}{2^{-1}}  \\
		&\; \text{as $2^{k_n}-n = \frac{s(2^{1-k_n} \cdot n)}{2^{1-k_n}} $ by Lemma \ref{gfb_technical_lemma}, Part 2}\\
		&= \sum\limits_{i=0}^{k_n-2} \frac{s(2^{i-k_n+1} \cdot n)}{2^{i-k_n+1}} = c_n \; \text{ by Theorem \ref{colless_explicit}.}
		\end{align*}
	Thus, $T_n^{gfb}$ has minimal Colless index. This completes the proof.
\end{proof}

\setcounter{theorem}{5}
\begin{theorem}
Let $T^{gfb}_n = (T_a, T_b)$ be a GFB tree on $n$ leaves with $n \in (2^{k_n-1}, 2^{k_n})$ (where $k_n = \lceil \log_2(n) \rceil$), i.e.
$$ (n_a, n_b) = 
	\begin{cases}
	(n - 2^{k_n-2}, 2^{k_n-2}), &\text{ if } n \in (2^{k_n-1}, 3 \cdot 2^{k_n-2}); \\
	(2^{k_n-1}, 2^{k_n-2}), &\text{ if } n = 3 \cdot 2^{k_n-2}; \\
	(2^{k_n-1}, n-2^{k_n-1}), &\text{ if } n \in (3 \cdot 2^{k_n-2}, 2^{k_n}),
	\end{cases}	
	$$
where we have $n_a-n_b = n-2^{k_n-1}$ in the first case, $n_a-n_b=2^{k_n-2}$ in the second case and $n_a-n_b=2^{k_n}-n$ in the last case. Now, suppose that $\widehat{T} = (\widehat{T}_a, \widehat{T}_b)$ is a tree with a more extreme leaf partitioning, i.e. $\widehat{n}_a - \widehat{n}_b > n_a-n_b$ or, to be more precise,
$$ (\widehat{n}_a, \widehat{n}_b) = 
	\begin{cases}
	(n-2^{k_n-2}+j, 2^{k_n-2}-j) \text{ with } j \in \{1, \ldots, 2^{k_n-2}-1\}, &\text{ if } n \in (2^{k_n-1}, 3 \cdot 2^{k_n-2}); \\
	(2^{k_n-1}+j, 2^{k_n-2}-j) \text{ with } j \in \{1, \ldots, 2^{k_n-2}-1 \}, &\text{ if } n = 3 \cdot 2^{k_n-2}; \\
	(2^{k_n-1} + j, n - 2^{k_n-1} - j) \text{ with } j \in \{1, \ldots, n-2^{k_n-1}-1\}, &\text{ if }  n \in (3 \cdot 2^{k_n-2}, 2^{k_n}).
	\end{cases}$$
Then, we have
$$ c_n = \mathcal{C}(T^{gfb}_n) < \mathcal{C}(\widehat{T}),$$
i.e. $\widehat{T}$ does \emph{not} have minimal Colless index (where the first equality follows from Theorem \ref{GFB_is_minimal}). 
\end{theorem}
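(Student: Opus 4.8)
The plan is to prove each of the three subcases by contradiction: assume that $\widehat{T} = (\widehat{T}_a, \widehat{T}_b)$ \emph{also} has minimal Colless index and derive a strict inequality $\mathcal{C}(\widehat{T}) < \mathcal{C}(\widehat{T})$. The common first step is the reduction furnished by Lemma~\ref{max_subtrees} together with Theorem~\ref{GFB_is_minimal}: if both $T^{gfb}_n = (T_a, T_b)$ and $\widehat{T}$ are Colless-minimal, then Lemma~\ref{colless_sum} gives
$$\mathcal{C}(T^{gfb}_n) = (n_a - n_b) + c_{n_a} + c_{n_b}, \qquad \mathcal{C}(\widehat{T}) = (\widehat{n}_a - \widehat{n}_b) + c_{\widehat{n}_a} + c_{\widehat{n}_b},$$
and since $\widehat{n}_a - \widehat{n}_b = (n_a - n_b) + 2j$, the whole statement reduces to showing $c_{n_a} + c_{n_b} < 2j + c_{\widehat{n}_a} + c_{\widehat{n}_b}$. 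To carry out this comparison I would substitute the explicit expression of Theorem~\ref{colless_explicit} for each relevant $c_m$, strip integer summands out of every $s(\cdot)$ using Lemma~\ref{subbaditivity_of_s}, Part~2, so that the two resulting sums run over a common index set, and then apply the subadditivity of $s$ (Lemma~\ref{subbaditivity_of_s}, Part~1) term by term; a leftover $-j$ that survives the cancellation makes the inequality strict because $j \geq 1$.

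Since the subcase $n \in (2^{k_n-1}, 3\cdot 2^{k_n-2})$ has already been carried out in the main text, what remains is the single value $n = 3\cdot 2^{k_n-2}$ and the upper interval $(3\cdot 2^{k_n-2}, 2^{k_n})$. For $n = 3\cdot 2^{k_n-2}$ the argument is immediate and uses no explicit formula: by Proposition~\ref{GFB_Decomposition} both $T_a$ and $T_b$ of $T^{gfb}_n$ are fully balanced, so $c_{n_a} = c_{n_b} = 0$ by Theorem~\ref{min_colless} and $\mathcal{C}(T^{gfb}_n) = n_a - n_b = 2^{k_n-2}$; then $\mathcal{C}(\widehat{T}) = 2^{k_n-2} + 2j + c_{\widehat{n}_a} + c_{\widehat{n}_b} \geq 2^{k_n-2} + 2 > \mathcal{C}(T^{gfb}_n)$ because Colless values are nonnegative and $j \geq 1$.

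For $n \in (3\cdot 2^{k_n-2}, 2^{k_n})$ I would mirror the main-text argument with the roles of the two pending subtrees exchanged. Here, by Proposition~\ref{GFB_Decomposition}, $T_a$ is the fully balanced tree of height $k_n-1$ (so $c_{n_a} = 0$), $n_b = n - 2^{k_n-1}$ satisfies $k_{n_b} = k_n-1$, and $n_a - n_b = 2^{k_n}-n$ (this is the $i=0$ term of $c_n$, cf.\ Lemma~\ref{gfb_technical_lemma}, Part~2); the perturbed tree has $\widehat{n}_b = n_b - j$. As before, if $c_{n_b} < 2j$ we are done immediately; otherwise $c_{n_b} \geq 2j$ combined with the bound $c_{n_b} < 2^{(k_n-1)-1} = 2^{k_n-2}$ from Lemma~\ref{max_min_Colless} yields $2j < 2^{k_n-2}$, i.e.\ $j < 2^{k_n-3}$. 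This constraint, together with the admissible range of $j$, forces $k_{\widehat{n}_a} = k_n$ and $k_{\widehat{n}_b} \in \{k_n-2, k_n-1\}$ (the two subsubcases), and it also gives $2^{2-k_n}j \in (0, \tfrac12)$, hence $s(2^{2-k_n}j)/2^{2-k_n} = j$, the analogue of Equation~\eqref{j}. Expanding $\mathcal{C}(T^{gfb}_n)$ and $\mathcal{C}(\widehat{T})$ via Theorem~\ref{colless_explicit}, aligning the sums after peeling off the appropriate powers of two with Lemma~\ref{subbaditivity_of_s}, Part~2, and subtracting then leaves an expression of the form $\sum_i \frac{s(x_i + y_i) - s(x_i) - s(y_i)}{2^{e_i}} - j$, in which each numerator is $\leq 0$ by Lemma~\ref{subbaditivity_of_s}, Part~1, so the whole expression is $< 0$ — contradicting the minimality of $\mathcal{C}(\widehat{T})$.

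The step I expect to be the main obstacle is precisely this alignment of the two expansions from Theorem~\ref{colless_explicit}. One must first establish that the ceilings $k_{\widehat{n}_a}$ and $k_{\widehat{n}_b}$ are sufficiently constrained — this is exactly where Lemma~\ref{max_min_Colless} is indispensable — so that both sums run over the same index range, and one must then pull exactly the right power of two out of each occurrence of $s$ (via Lemma~\ref{subbaditivity_of_s}, Part~2) so that the difference collapses into a sum of subadditivity defects $s(x+y) - s(x) - s(y) \leq 0$ plus the single leftover term $-j$. Once the sums are lined up, the strictness of the inequality is a one-line consequence.
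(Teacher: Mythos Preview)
Your plan is essentially the paper's own route: reduce via Lemmas~\ref{colless_sum} and~\ref{max_subtrees} to a comparison of $c_{n_a}+c_{n_b}$ against $2j+c_{\widehat{n}_a}+c_{\widehat{n}_b}$, dispose of $n=3\cdot 2^{k_n-2}$ trivially, and in the remaining interval use Lemma~\ref{max_min_Colless} to pin down the ceilings $k_{\widehat{n}_a},k_{\widehat{n}_b}$, expand everything with Theorem~\ref{colless_explicit}, strip integers from $s(\cdot)$, and finish by subadditivity. Your derivation that $c_{n_b}\geq 2j$ forces $j<2^{k_n-3}$, hence $k_{\widehat{n}_b}\in\{k_n-2,k_n-1\}$ and $k_{\widehat{n}_a}=k_n$, is exactly right.

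One point where your sketch diverges from what actually happens: the leftover after alignment is \emph{not} uniformly ``$-j$'' in Case~3, and the identity $s(2^{2-k_n}j)/2^{2-k_n}=j$ you prepare is not the one that gets used. In the subsubcase $k_{\widehat{n}_b}=k_n-1$ the leftover is $-2j-\frac{s(2^{1-k_n}j)}{2^{1-k_n}}$ (even better than $-j$, so no issue). In the subsubcase $k_{\widehat{n}_b}=k_n-2$, however, the stray boundary term comes from the \emph{$\widehat{n}_b$-expansion}, not the $\widehat{n}_a$-expansion, and involves $s(2^{2-k_n}(n-j))$ rather than $s(2^{2-k_n}j)$. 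The paper handles this by showing directly that the bounds $n-3\cdot 2^{k_n-2}\leq j < n-2^{k_n-1}-2^{k_n-3}$ (which follow from $k_{\widehat{n}_b}=k_n-2$) place $2^{2-k_n}(n-j)\in(\tfrac{5}{2},3]$, whence $\frac{s(2^{2-k_n}(n-j))}{2^{2-k_n}}=3\cdot 2^{k_n-2}-n+j$ and therefore $2j-\frac{s(2^{2-k_n}(n-j))}{2^{2-k_n}}=j+(n-3\cdot 2^{k_n-2})>0$. So the strictness in this subsubcase comes from a small explicit computation rather than from your anticipated Equation~\eqref{j} analogue; apart from this bookkeeping detail your proposal matches the paper.
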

\begin{proof}[Proof of Theorem \ref{gfb_extreme}: Parts 2 and 3 (Part 1 is proven in Section \ref{sec_characterization} of the present manuscript)] \leavevmode
\begin{enumerate}
\item[2.] $n = 3 \cdot 2^{k_n-2}$: \\
Let $T^{gfb}_n=(T_a,T_b)$ be the GFB tree on $n$ leaves and let $n_a$ and $n_b$ denote the number of leaves of $T_a$ and $T_b$, respectively.
From Proposition \ref{GFB_Decomposition}, we have that $n_a = 2^{k_n-1}$ and $n_b=2^{k_n-2}$. In particular, $T_a$ and $T_b$ are fully balanced trees of height $k_n-1$ and $k_n-2$, respectively.  As $T^{gfb}_n$ is a GFB tree, by Theorem \ref{GFB_is_minimal} it has minimal Colless index and so do its subtrees $T_a$ and $T_b$ (due to Lemma \ref{max_subtrees}). 
Thus,
\begin{align*}
\mathcal{C}(T^{gfb}_n) &= n_a - n_b + c_{n_a} + c_{n_b} \, \text{ by Lemmas } \ref{colless_sum} \text{ and } \ref{max_subtrees} \text{ and Theorem } \ref{GFB_is_minimal} \\
&= 2^{k_n-1} - 2^{k_n-2} + c_{2^{k_n-1}} + c_{2^{k_n-2}} \\
&= 2^{k_n-1} - 2^{k_n-2} + 0 + 0 \, \text{ by Theorem \ref{min_colless}} \\
&= 2^{k_n-2}.
\end{align*}
Now, suppose that $\widehat{T}=(\widehat{T}_a, \widehat{T}_b)$ with $\widehat{n}_a = n_a+j$ and $\widehat{n}_b = n_b-j$ (and $j \in \{1, \ldots, 2^{k_n-2}-1\}$) is also a tree with minimal Colless index, i.e. $\mathcal{C}(\widehat{T}) = \mathcal{C}(T)=c_n$. 
Supposing that $\widehat{T}$ is a tree with minimal Colless index, we have (again by Lemmas \ref{colless_sum} and \ref{max_subtrees})
\begin{align*}
\mathcal{C}(\widehat{T}) &= \widehat{n}_a - \widehat{n}_b + c_{\widehat{n}_a} + c_{\widehat{n}_b}\\
&= 2^{k_n-1}+j - 2^{k_n-2} + j + c_{2^{k_n-1}+j} + c_{2^{k_n-2}-j} \\
&= 2^{k_n-2} + \underbrace{2j}_{> 0} + \underbrace{c_{2^{k_n-1}+j}}_{\geq 0} + \underbrace{c_{2^{k_n-2}-j}}_{\geq 0} \\
&> 2^{k_n-2}.
\end{align*}
However, this implies $\mathcal{C}(\widehat{T}) > \mathcal{C}(T^{gfb}_n) = 2^{k_n-2}$, i.e. $\widehat{T}$ is not a tree with minimal Colless index, which completes the proof for this subcase.
\item[3.] $n \in (3 \cdot 2^{k_n-2}, 2^{k_n})$: \\
From Proposition \ref{GFB_Decomposition}, we have for $T^{gfb}_n=(T_a,T_b)$ that $n_a=2^{k_n-1}$ and $n_b=n-2^{k_n-1}$. In particular, $T_a$ is the fully balanced tree of height $k_n-1$ and we have $\mathcal{C}(T_a)=c_{n_a}=0$ (by Theorem \ref{min_colless}). For $T_b$ we have $k_{n_b} = \lceil \log_2(n_b) \rceil = k_n-1$.  
Now, consider $\mathcal{C}(T^{gfb}_n)$. By Lemmas \ref{colless_sum} and \ref{max_subtrees} and by Theorem \ref{GFB_is_minimal} we have
\begin{align}
\mathcal{C}(T^{gfb}_n) &= n_a - n_b + c_{n_a} + c_{n_b} \nonumber \\
&= n_a - n_b + c_{n_b}  \, \text{ by Theorem \ref{min_colless}} \label{Colless_T_cnb} \\
&= n_a - n_b + c_{n-2^{k_n-1}} \nonumber \\
&= n_a - n_b + \sum\limits_{i=0}^{k_n-3} \frac{s(2^{i-k_n+2} \cdot (n-2^{k_n-1}))}{2^{i-k_n+2}} \nonumber \\
&= n_a - n_b + \sum\limits_{i=0}^{k_n-3} \frac{s(2^{i-k_n+2} \cdot n - 2^{i+1})}{2^{i-k_n+2}} \nonumber \\
&= n_a - n_b + \sum\limits_{i=0}^{k_n-3} \frac{s(2^{i-k_n+2} \cdot n)}{2^{i-k_n+2}} \, \text{ by Lemma \ref{subbaditivity_of_s}, Part 2, as } 2^{i+1} \in \mathbb{Z} \nonumber \\
&= n_a - n_b + \sum\limits_{i=0}^{k_n-3} \frac{s(2^{i-k_n+2} \cdot (n-j) + 2^{i-k_n+2} \cdot j)}{2^{i-k_n+2}} \label{Colless_T_secondhalf}
\end{align}
Now, suppose that $\widehat{T} = (\widehat{T}_a, \widehat{T}_b)$ with $\widehat{n}_a = n_a+j$ and $\widehat{n}_b=n_b-j$ (and $j \in \{1, \ldots, n-2^{k_n-1}-1\}$) is also a tree with minimal Colless index, i.e. $\mathcal{C}(\widehat{T}) = \mathcal{C}(T^{gfb}_n)=c_n$. Consider $\mathcal{C}(\widehat{T})$. Again, by Lemmas \ref{colless_sum} and \ref{max_subtrees}, we have
\begin{align}
\mathcal{C}(\widehat{T}) &= \widehat{n}_a - \widehat{n}_b + c_{\widehat{n}_a} + c_{\widehat{n}_b} \nonumber \\
&= (n_a+j) - (n_b-j) + c_{\widehat{n}_a} + c_{\widehat{n}_b} \nonumber \\ 
&= n_a - n_b + 2j + c_{\widehat{n}_a}+ c_{\widehat{n}_b} \nonumber \\
&= n_a - n_b + 2j + \underbrace{c_{2^{k_n-1}+j}}_{\geq 0} + \underbrace{c_{n-2^{k_n-1}-j}}_{\geq 0}. \label{Colless_That_secondhalf}
\end{align}
Now, comparing Equations \eqref{Colless_T_cnb} and \eqref{Colless_That_secondhalf}, it directly follows that we have $\mathcal{C}(T^{gfb}_n) < \mathcal{C}(\widehat{T})$ whenever $c_{n_b} < 2j$, which would contradict the minimality of $\mathcal{C}(\widehat{T})$. Thus, we have that $c_{n_b} \geq 2j$. 
We now claim that $c_{n_b} \geq 2j$ implies $k_{\widehat{n}_b} = \lceil \log_2 (\widehat{n}_b) \rceil \in \{k_n-2, k_n-1\}$. 
	\begin{itemize}
	\item Suppose $k_{\widehat{n}_b} \leq k_n-3$. As $\widehat{n}_b = n_b - j$ and $k_{n_b} = k_n-1$, this implies $j \geq 2^{k_n-3}+1$. In particular, $2j \geq 2^{k_n-2}+2$. \\
	Now, consider $c_{n_b}$: As $k_{n_b} = k_n-1$, we have from Lemma \ref{max_min_Colless} that
	$$ c_{n_b} < 2^{k_n-1}-1 = 2^{k_n-2}.$$
	Summarizing the above, we have
	$$ c_{n_b} < 2^{k_n-2} < 2^{k_n-2} + 2 \leq 2j,$$
	which contradicts the assumption that $c_{n_b} \geq 2j$. Thus, $k_{\widehat{n}_b} \geq k_n-2$.
	\item Moreover, as $\widehat{n}_b < n_b$, we clearly have $k_{\widehat{n}_b} \leq k_{n_b} = k_n-1$. 
	\end{itemize}
Thus, in total $k_{\widehat{n}_b} \in \{k_n-2, k_n-1\}$. \\

Moreover, for $\widehat{T}_a$ we have the following: $k_{\widehat{n}_a} = k_n$.
	\begin{itemize}
	\item First of all, as $\widehat{n}_a = n_a + j = 2^{k_n-1} + j$ (with $j \geq 1$), it immediately follows that $k_{\widehat{n}_a} > k_{n_a} = k_n-1$. 
	\item On the other hand, as $\widehat{n}_a < n$, we also have $k_{\widehat{n}_a} \leq k_n$. 
	\end{itemize}
	Thus, $k_{\widehat{n}_a} = k_n$. \\
	
	We now distinguish between two cases:
	\begin{enumerate}
	\item $k_{\widehat{n}_a} = k_n$ and $k_{\widehat{n}_b} = k_n-1$: \\
	Continuing from Equation \eqref{Colless_That_secondhalf} and using Theorem \ref{colless_explicit}, we have
		\begin{align}
		\mathcal{C}(\widehat{T}) &= n_a - n_b + 2j + c_{2^{k_n-1}+j} + c_{n-2^{k_n-1}-j} \nonumber \\
		&= n_a - n_b + 2j + \sum\limits_{i=0}^{k_n-2} \frac{s(2^{i-k_n+1} \cdot (2^{k_n-1} + j))}{2^{i-k_n+1}} + \sum\limits_{i=0}^{k_n-3} \frac{s(2^{i-k_n+2} \cdot (n-2^{k_n-1}-j))}{2^{i-k_n+2}} \nonumber \\
		&= n_a - n_b + 2j + \sum\limits_{i=0}^{k_n-2} \frac{s( 2^i + 2^{i-k_n+1} \cdot j)}{2^{i-k_n+1}} + \sum\limits_{i=0}^{k_n-3} \frac{s(2^{i-k_n+2} \cdot (n-j) - 2^{i+1})}{2^{i-k_n+2}} \nonumber \\
		&= n_a - n_b + 2j + \sum\limits_{i=0}^{k_n-2} \frac{s(2^{i-k_n+1} \cdot j)}{2^{i-k_n+1}} + \sum\limits_{i=0}^{k_n-3} \frac{s(2^{i-k_n+2} \cdot (n-j))}{2^{i-k_n+2}} \nonumber \\
		&\text{ by Lemma \ref{subbaditivity_of_s}, Part 2 as } 2^{i}, 2^{i+1} \in \mathbb{Z} \nonumber \\
		&= n_a - n_b + 2j + \frac{s(2^{1-k_n} \cdot j)}{2^{1-k_n}} + \sum\limits_{i=1}^{k_n-2} \frac{s(2^{i-k_n+1} \cdot j)}{2^{i-k_n+1}} + \sum\limits_{i=0}^{k_n-3} \frac{s(2^{i-k_n+2} \cdot (n-j))}{2^{i-k_n+2}} \nonumber \\
		&= n_a - n_b + 2j + \frac{s(2^{1-k_n} \cdot j)}{2^{1-k_n}} + \sum\limits_{i=0}^{k_n-3} \frac{s(2^{i-k_n+2} \cdot j)}{2^{i-k_n+2}} + \sum\limits_{i=0}^{k_n-3} \frac{s(2^{i-k_n+2} \cdot (n-j))}{2^{i-k_n+2}}. \label{Colless_That_second1_final}
		\end{align}
		Now, as both $T^{gfb}_n$ and $\widehat{T}$ are trees with minimal Colless index, we have (using Equations \eqref{Colless_T_secondhalf} and \eqref{Colless_That_second1_final})
		\begin{align*}
		0 &= \mathcal{C}(T^{gfb}_n) - \mathcal{C}(\widehat{T}) \\
		&= n_a - n_b + \sum\limits_{i=0}^{k_n-3} \frac{s(2^{i-k_n+2} \cdot (n-j) + 2^{i-k_n+2} \cdot j)}{2^{i-k_n+2}} \\
		&\hspace*{5mm} - n_a + n_b - 2j - \frac{s(2^{1-k_n} \cdot j)}{2^{1-k_n}} - \sum\limits_{i=0}^{k_n-3} \frac{s(2^{i-k_n+2} \cdot j)}{2^{i-k_n+2}} - \sum\limits_{i=0}^{k_n-3} \frac{s(2^{i-k_n+2} \cdot (n-j))}{2^{i-k_n+2}} \\
		&= \sum\limits_{i=0}^{k_n-3} \underbrace{\frac{s(2^{i-k_n+2} \cdot (n-j) + 2^{i-k_n+2} \cdot j) - \big( s (2^{i-k_n+2} \cdot (n-j)) + s(2^{i-k_n+2} \cdot j) \big)}{2^{i-k_n+2}}}_{\leq 0 \text{ by Lemma } \ref{subbaditivity_of_s}, \text{ Part 1}} \\
	&\hspace*{5mm} - \underbrace{2j}_{> 0} - \underbrace{\frac{s(2^{1-k_n} \cdot j)}{2^{1-k_n}}}_{\geq 0} \\
	&< 0.
		\end{align*}
	This is a contradiction and thus, $\mathcal{C}(\widehat{T})$ is not minimal. To be precise, $\mathcal{C}(\widehat{T}) > \mathcal{C}(T^{gfb}_n)$, which completes the proof for this subcase. 

	\item $k_{\widehat{n}_a} = k_n$ and $k_{\widehat{n}_b} = k_n-2$: \\
	In this case, we can conclude the following (which will be useful later on):
	\begin{itemize}
		\item On the one hand, we have
			\begin{align*}
			j &\geq n_b - 2^{k_n-2} \; \text{ (in order to have } k_{\widehat{n}_b} \leq k_n-2) \\
			&= n - 2^{k_n-1} - 2^{k_n-2} \\
			&= n- 3 \cdot 2^{k_n-2}.
			\end{align*}
		\item On the other hand, we have
			\begin{align*}
			j &\leq n_b - 2^{k_n-2} + 2^{k_n-3} - 1 \; \text{ (in order to have } k_{\widehat{n}_b} \geq k_n-2) \\
			&< n_b - 2^{k_n-2} + 2^{k_n-3} \\
			&= n- 2^{k_n-1} - 2^{k_n-2} + 2^{k_n-3} \\
			&= n- 2^{k_n-1} - 2^{k_n-3}.
			\end{align*}
		\end{itemize}
		To summarize,
		\begin{align*}
		&\hspace*{5mm} n - 3 \cdot 2^{k_n-2} \leq j < n - 2^{k_n-1} - 2^{k_n-3} \\
		&\Leftrightarrow -n + 3 \cdot 2^{k_n-2} \geq -j > -n + 2^{k_n-1} + 2^{k_n-3} \\
		&\Leftrightarrow 3 \cdot 2^{k_n-2} \geq n-j > 2^{k_n-1} + 2^{k_n-3} = 2 \cdot 2^{k_n-2} + \frac{1}{2} \cdot 2^{k_n-2} \\
		&\Leftrightarrow 2^{2-k_n} \cdot  3 \cdot 2^{k_n-2} \geq 2^{2-k_n} \cdot (n-j) > 2^{2-k_n} \cdot (2 \cdot 2^{k_n-2} + \frac{1}{2} \cdot 2^{k_n-2}) \\
		&\Leftrightarrow 3 \geq 2^{2-k_n} \cdot (n-j) > \frac{5}{2}.
		\end{align*}
		This, however, implies
		\begin{align*}
		\frac{s(2^{2-k_n} \cdot (n-j))}{2^{2-k_n}} &= \frac{3 - 2^{2-k_n} \cdot (n-j)}{2^{2-k_n}} \\
		&= 3 \cdot 2^{k_n-2} - (n-j) \\
		&= 3 \cdot 2^{k_n-2} - n + j.
		\end{align*}
		Now, consider
		\begin{align}
		2j - \frac{s(2^{2-k_n} \cdot (n-j))}{2^{2-k_n}} &= 2j - 3 \cdot 2^{k_n-2} + n - j \nonumber \\
		&= n + j - 3 \cdot 2^{k_n-2} \nonumber \\
		&= \underbrace{j}_{> 0} + \underbrace{n - 3 \cdot 2^{k_n-2}}_{> 0 \text{ as } n \in (3 \cdot 2^{k_n-2}, 2^{k_n})} \nonumber \\
		&> 0. \label{positive}
		\end{align}
	
	We now consider $\mathcal{C}(\widehat{T})$.
	Continuing from Equation \eqref{Colless_That_secondhalf} and using Theorem \ref{colless_explicit}, we have
	\begin{align}
	\mathcal{C}(\widehat{T}) &= n_a - n_b + 2j + c_{2^{k_n-1}+j} + c_{n-2^{k_n-1}-j} \nonumber \\
	&= n_a - n_b + 2j + \sum\limits_{i=0}^{k_n-2} \frac{s(2^{i-k_n+1} \cdot (2^{k_n-1} + j))}{2^{i-k_n+1}} + \sum\limits_{i=0}^{k_n-4} \frac{s(2^{i-k_n+3} \cdot (n-2^{k_n-1} - j))}{2^{i-k_n+3}} \nonumber \\
	&= n_a - n_b + 2j + \sum\limits_{i=0}^{k_n-2} \frac{s( 2^i + 2^{i-k_n+1} \cdot j)}{2^{i-k_n+1}} + \sum\limits_{i=0}^{k_n-4} \frac{s( 2^{i+2} + 2^{i-k_n+3} \cdot (n-j))}{2^{i-k_n+3}} \nonumber \\
	&= n_a - n_b + 2j + \sum\limits_{i=0}^{k_n-2} \frac{s(2^{i-k_n+1} \cdot j)}{2^{i-k_n+1}} + \sum\limits_{i=0}^{k_n-4} \frac{s(2^{i-k_n+3} \cdot (n-j))}{2^{i-k_n+3}} \nonumber \\
	&\text{ by Lemma \ref{subbaditivity_of_s}, Part 2, as } 2^{i} \text{ and } 2^{i+2} \in \mathbb{Z} \nonumber \\
	&= n_a - n_b + 2j +  \frac{s(2^{1-k_n} \cdot j)}{2^{1-k_n}} + \sum\limits_{i=1}^{k_n-2} \frac{s(2^{i-k_n+1} \cdot j)}{2^{i-k_n+1}} + \sum\limits_{i=0}^{k_n-4} \frac{s(2^{i-k_n+3} \cdot (n-j))}{2^{i-k_n+3}} \nonumber \\
	&= n_a - n_b + 2j +  \frac{s(2^{1-k_n} \cdot j)}{2^{1-k_n}} + \sum\limits_{i=0}^{k_n-3} \frac{s(2^{i-k_n+2} \cdot j)}{2^{i-k_n+2}} + \sum\limits_{i=0}^{k_n-4} \frac{s(2^{i-k_n+3} \cdot (n-j))}{2^{i-k_n+3}} \nonumber \\
	&= n_a - n_b + 2j +  \frac{s(2^{1-k_n} \cdot j)}{2^{1-k_n}} + \sum\limits_{i=0}^{k_n-3} \frac{s(2^{i-k_n+2} \cdot j)}{2^{i-k_n+2}} + \sum\limits_{i=1}^{k_n-3} \frac{s(2^{i-k_n+2} \cdot (n-j))}{2^{i-k_n+2}} \nonumber \\
	&= n_a - n_b + 2j +  \frac{s(2^{1-k_n} \cdot j)}{2^{1-k_n}} + \sum\limits_{i=0}^{k_n-3} \frac{s(2^{i-k_n+2} \cdot j)}{2^{i-k_n+2}} + \sum\limits_{i=0}^{k_n-3} \frac{s(2^{i-k_n+2} \cdot (n-j))}{2^{i-k_n+2}} \nonumber \\
	&\hspace*{5mm} - \frac{s(2^{2-k_n} \cdot (n-j))}{2^{2-k_n}} \nonumber \\
	&= n_a - n_b + \sum\limits_{i=0}^{k_n-3} \frac{s(2^{i-k_n+2} \cdot j) + s(2^{i-k_n+2} \cdot (n-j))}{2^{i-k_n+2}} + 2j + \frac{s(2^{1-k_n} \cdot j)}{2^{1-k_n}} \nonumber \\
	&\hspace*{5mm} - \frac{s(2^{2-k_n} \cdot (n-j))}{2^{2-k_n}}. \label{Colless_That_second2_final}
	\end{align}

	Now, as both $\mathcal{C}(T^{gfb}_n)$ and $\mathcal{C}(\widehat{T})$ are minimal, we have (using Equations \eqref{Colless_T_secondhalf} and \eqref{Colless_That_second2_final})
	\begin{align*}
	0 &= \mathcal{C}(T^{gfb}_n) - \mathcal{C}(\widehat{T}) \\
	&= n_a - n_b + \sum\limits_{i=0}^{k_n-3} \frac{s(2^{i-k_n+2} \cdot (n-j) + 2^{i-k_n+2} \cdot j)}{2^{i-k_n+2}} - n_a + n_b - \frac{s(2^{1-k_n} \cdot j)}{2^{1-k_n}} \\
	&\hspace*{5mm}  - \sum\limits_{i=0}^{k_n-3} \frac{s(2^{i-k_n+2} \cdot (n-j)) + s(2^{i-k_n+2} \cdot j)}{2^{i-k_n+2}} - \Big( 2j - \frac{s(2^{2-k_n} \cdot(n-j)}{2^{2-k_n}} \Big) \\
	&= \sum\limits_{i=0}^{k_n-3} \underbrace{\frac{s(2^{i-k_n+2} \cdot (n-j) + 2^{i-k_n+2} \cdot j) - \big( s (2^{i-k_n+2} \cdot (n-j)) + s(2^{i-k_n+2} \cdot j) \big)}{2^{i-k_n+2}}}_{\leq 0 \text{ by Lemma } \ref{subbaditivity_of_s}} \\
	&\hspace*{5mm} - \underbrace{\frac{s(2^{1-k_n} \cdot j)}{2^{1-k_n}}}_{\geq 0} - \underbrace{\Big( 2j - \frac{s(2^{2-k_n} \cdot (n-j))}{2^{2-k_n}} \Big)}_{> 0 \text{ by Eq. } \eqref{positive}} \\
	&< 0.
	\end{align*}
	Again, this is a contradiction. Thus, $\mathcal{C}(\widehat{T})$ cannot be minimal.
	\end{enumerate}
\end{enumerate}
This completes the proof.
\end{proof}

\end{document}